\documentclass[11pt,a4paper,reqno]{article}
\usepackage{amsmath,amssymb,amsthm,mathtools}
\usepackage[a4paper,margin=24mm]{geometry}
\usepackage{lmodern}
\usepackage{microtype}
\usepackage{booktabs,tabularx}
\usepackage{xcolor}
\usepackage{hyperref}

\definecolor{primary}{RGB}{18,53,91}
\hypersetup{
    colorlinks=true,
    linkcolor=primary,
    citecolor=primary,
    urlcolor=primary,
    pdftitle={Reconstruction of Arbitrary Torsion-Free Nilpotent (and Hypercentral) Groups from the Division Rings of Fractions of Their Rational Group Algebras},
    pdfauthor={Jinyu Lin, Xiaodong Wang}
}

\theoremstyle{plain}
\newtheorem{theorem}{Theorem}[section]
\newtheorem{lemma}[theorem]{Lemma}
\newtheorem{proposition}[theorem]{Proposition}
\newtheorem{corollary}[theorem]{Corollary}

\theoremstyle{definition}

\newtheorem{remark}[theorem]{Remark}

\numberwithin{equation}{section}

\newcommand{\Q}{\mathbb{Q}}
\newcommand{\Z}{\mathbb{Z}}
\newcommand{\N}{\mathbb{N}}
\newcommand{\Fp}{\mathbb{F}_p}
\newcommand{\Qbar}{\overline{\mathbb{Q}}}
\DeclareMathOperator{\Frac}{Frac}
\DeclareMathOperator{\FracOre}{Frac_{Ore}}
\DeclareMathOperator{\rank}{rank}
\DeclareMathOperator{\trdeg}{trdeg}
\DeclareMathOperator{\coker}{coker}
\DeclareMathOperator{\supp}{supp}
\DeclareMathOperator{\Tor}{Tor}
\DeclareMathOperator{\Hom}{Hom}
\DeclareMathOperator{\GL}{GL}
\DeclareMathOperator{\id}{id}

\title{\textbf{Reconstruction of Arbitrary Torsion-Free Nilpotent (and Hypercentral) Groups from the Division Rings of Fractions of Their Rational Group Algebras}}
\author{Jinyu Lin$^{1}$, Xiaodong Wang$^{2}$\\[0.6em]
\small $^{1}$\,Taishan College, Shandong University, Jinan, China\\[0.2em]
\small $^{2}$\,School of Mathematical Sciences, Ocean University of China, Qingdao, China\\[0.4em]
\small $^{1}$\,202400171064@mail.sdu.edu.cn, $^{2}$\,wxd8510@stu.ouc.edu.cn}
\date{}

\begin{document}
\maketitle

\begin{abstract}
Let $G$ and $H$ be torsion-free nilpotent groups of arbitrary cardinality, without any uniform upper bound imposed on their cardinality, torsion-free rank, number of generators, or nilpotency class. We prove that if the classical Ore division rings of fractions of their rational group algebras are isomorphic as rings, namely
\[
D_{\Q}(G) := \FracOre \Q[G] \cong \FracOre \Q[H] =: D_{\Q}(H),
\]
then $G$ and $H$ are isomorphic as groups ($G \cong H$). This result extends directly to $k$-algebra isomorphisms $D_k(G) \cong_k D_k(H)$ over any base field $k$ of characteristic zero, as well as to arbitrary torsion-free hypercentral groups (and, more generally, to torsion-free groups satisfying bi-orderability, the Ore condition, and nontrivial intersection of normal subgroups with the center). A key tool in our proof is a weight-preserving isomorphism theorem for rational function fields of free abelian groups: let $P$ and $Q$ be free abelian groups of arbitrary rank equipped with surjective homomorphisms onto a common abelian group $B$; if their rational function fields over a field $k$ containing an algebraically closed subfield of characteristic zero admit a $B$-weight-preserving field isomorphism $k(P) \cong_B k(Q)$, then there exists a weight-preserving group isomorphism $P \cong_B Q$. By combining the Malcev--Neumann power series division ring, lowest-degree coset extraction over central localizations, the lowest-term rational coefficient homomorphism, and an embedding into a torsion-free abelianized auxiliary function field, we reduce the division ring isomorphism to a weight-preserving rational function field isomorphism between the quotients $G/L$ and $H/L$, and explicitly construct a group isomorphism $G \cong H$ via a central modification.

\vspace{4pt}
\noindent\textbf{Keywords:} Torsion-free nilpotent group; torsion-free hypercentral group; rational group algebra; Ore division ring of fractions; isomorphism problem; rational function field; Malcev--Neumann division ring; transfinite induction

\vspace{2pt}
\noindent\textbf{2020 Mathematics Subject Classification:} Primary 16K40, 16S34, 20F18; Secondary 20K20, 12F20
\end{abstract}

\section{Introduction}

Determining the extent to which a group ring and its rings of fractions determine the isomorphism class of the underlying group is a classic question in algebra. Since Higman posed the isomorphism problem for integral group rings $\Z[G]$ in 1940, isomorphism problems across different coefficient rings and group classes have been studied extensively (see Passman \cite{Passman1977}). When the ring of integers $\Z$ is replaced by the rational field $\Q$ or a general field $k$ of characteristic zero, the group algebra $k[G]$ loses part of the arithmetic information carried by integral coefficients; for instance, the rational group algebra of a finite abelian group is determined solely by the number of its cyclic subgroups of each order, while for finite nilpotent groups there exist non-isomorphic groups with isomorphic rational (and even integral) group rings.

When one turns to torsion-free nilpotent groups, the division rings of fractions of their group algebras display a different behavior. By a classical theorem of Malcev \cite{Malcev1949,Malcev1951}, every torsion-free nilpotent group $G$ admits a bi-invariant total ordering $<$, so its group algebra $k[G]$ has no zero divisors. Because nilpotent groups are amenable, a theorem of Tamari \cite{Tamari1954} (see also Bartholdi and Kielak \cite{BartholdiKielak2019} for a modern proof) guarantees that $k[G]$ satisfies the left and right Ore conditions and thus possesses a classical Ore division ring of fractions $D_k(G) := \FracOre k[G]$. At the same time, by the work of Neumann \cite{Neumann1949}, formal power series with well-ordered support on a bi-ordered group $(G, <)$ form the Malcev--Neumann division ring $k((G, <))$, into which $D_k(G)$ embeds naturally by the universal property of Ore fractions (see Lichtman \cite{Lichtman1978,Lichtman2000} and Cohn \cite{Cohn1995}). In the finitely generated setting, Farkas, Schofield, Snider, and Stafford \cite{Farkas1982} proved in 1982 that finitely generated torsion-free nilpotent groups are uniquely determined by the division rings of fractions of their group algebras, while constructing a counterexample showing that the analogous statement fails for finitely generated torsion-free supersolvable groups.

In this paper, we study the case of general torsion-free nilpotent groups: let $G$ and $H$ be arbitrary torsion-free nilpotent groups (without any uniform upper bound on cardinality, rank, number of generators, or nilpotency class); if their rational group division rings of fractions $D_{\Q}(G)$ and $D_{\Q}(H)$ are isomorphic, does it necessarily follow that $G \cong H$?

We give an affirmative answer to this question. Throughout the paper, we use multiplicative notation in the group-theoretic sections and additive notation in the sections on free abelian groups and weights; for a torsion-free abelian group $A$, we write $k(A) := \Frac k[A]$. By the standard definition, a nilpotent group is a group whose lower central series terminates in finitely many steps. Our main theorem is as follows:

\begin{theorem}\label{thm:main}
Let $G$ and $H$ be arbitrary torsion-free nilpotent groups. If there exists a division ring isomorphism
\[
D_{\Q}(G) \cong D_{\Q}(H),
\]
then $G \cong H$.
\end{theorem}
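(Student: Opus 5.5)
We argue by induction along the upper central series, mirroring the strategy announced in the abstract. Fix $\varphi\colon D_{\Q}(G)\to D_{\Q}(H)$. The first step is to recover the center. The center of $D_{\Q}(G)$ is $\Frac \Q[Z(G)]$ together with possibly more; for torsion-free nilpotent $G$ we expect $Z(D_{\Q}(G))=\Q(Z(G))$. We would prove this by embedding $D_{\Q}(G)$ into the Malcev--Neumann ring $\Q((G,<))$. An element commuting with all of $G$ has a support that is conjugation invariant. Because the support is well-ordered and $G$ is nilpotent, the support is then forced to lie in a finite union of central cosets; a lowest-term argument should push this to $Z(G)$ itself. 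Hence $\varphi$ restricts to a field isomorphism $\Q(Z(G))\cong\Q(Z(H))$.

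Next we pass to a common central subgroup $L$ and to the quotients. We localize $\Q[G]$ at $\Q[Z(G)]\setminus\{0\}$, obtaining $\Q(Z(G))$ twisted by the group $G/Z(G)$: a crossed product $\Q(Z(G))*(G/Z(G))$, whose Ore division ring is again $D_{\Q}(G)$. The plan is to attach to each element of $D_{\Q}(G)$ its lowest-degree coset in $G/L$ with respect to a bi-order compatible with $L$, and its lowest-term rational coefficient. This yields a valuation-type homomorphism from $D_{\Q}(G)^\times$ to $G/L$, or at least to its abelianization. Transporting it by $\varphi$, we aim to produce a $B$-weight-preserving field isomorphism $k(P)\cong_B k(Q)$. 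Here $P$ and $Q$ are free abelian groups mapping onto suitable torsion-free abelian sections of $G$ and $H$, realized inside an auxiliary abelianized function field over $\Qbar$. Extending scalars to an algebraically closed field of characteristic zero is harmless, since the hypothesis passes to $k$-algebra isomorphisms after tensoring. The weight-preserving isomorphism theorem stated in the abstract then gives $P\cong_B Q$. From this we read off isomorphisms $Z(G)\cong Z(H)$ and $G/L\cong H/L$ that are compatible with the extension data.

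Finally we assemble $G\cong H$. Both groups are central extensions of isomorphic quotients by isomorphic centers. The cocycle classes have to match after a central modification, that is, after altering the quotient isomorphism by a homomorphism into the center. This matching is the \textbf{main obstacle}. Commutator data in $D_{\Q}(G)$ are visible in the group commutators $[g,h]\in Z$ of lowest terms, and these are preserved by $\varphi$ since it is multiplicative. The torsion-free part of the extension class, however, is only recovered up to the freedom in choosing lifts. We would try to choose lifts $\tilde g\in H$ whose images under $\varphi$ have the prescribed lowest term, which defines a map $G\to H$, and then show it is a homomorphism modulo a correction term valued in $Z(H)$ that can be absorbed. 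Arbitrary (infinite) nilpotency class is not an issue because the class is finite. Arbitrary rank is handled by working with the free abelian presentations $P$ and $Q$ rather than with bases. If needed, an induction on the nilpotency class is applied to $G/Z(G)$ via $\Q(Z(G))*(G/Z(G))$, with $\Q(Z(G))$ replacing $\Q$ as coefficient field.
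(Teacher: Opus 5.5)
There is a genuine gap, and it is exactly where you place the ``main obstacle'': you never carry out the assembly of $G\cong H$, and the framing you adopt cannot carry it. Knowing $Z(G)\cong Z(H)$ and $G/Z(G)\cong H/Z(H)$ says nothing by itself about the extension classes. You give no argument that the $Z(H)$-valued $2$-cocycle produced by your lifts $g\mapsto\tilde g$ is a coboundary. The ``commutator data'' you invoke is not the extension class; for class $>2$ the commutators $[g,h]$ need not even be central.

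Several other pieces are also unspecified. You do not say which free abelian $P,Q$ and which weight group $B$ the weighted theorem is applied to, how a weighted field isomorphism arises from $\varphi$, or how $P\cong_B Q$ would yield $Z(G)\cong Z(H)$. A ``common central subgroup $L$'' has no meaning until $G$ and $H$ sit in one ambient group. Moreover, the $L$ that actually works, $G\cap H$, is not central: it contains the derived subgroup. The fallback induction through $\Q(Z(G))*(G/Z(G))$ does not help either. That ring is a crossed product, not a group algebra over a new coefficient field, and $G/Z(G)\cong H/Z(H)$ does not give $G\cong H$.

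The missing idea is to avoid abstract extension classes by placing $G$ and an isomorphic copy of $H$ in one group, where they differ only by central factors.
\begin{enumerate}
\item Let $F=\Frac\Q[Z(G)]$. The paper uses the strong $G/Z(G)$-grading $A_G=\bigoplus_c Fg_c$ of the central localization to define a lowest-coset leading-term homomorphism $\ell\colon D^\times\to F^\times G$.
\item It shows that $\ell$ is injective on the nilpotent group $F^\times\varphi(H)$. The kernel is a normal subgroup meeting the center $F^\times$ trivially, hence trivial. This replaces $H$ by $\ell(\varphi(H))\cong H$ with $F^\times H=F^\times G$.
\item Normalizing by the lowest rational coefficient gives $E=KG=KH$ with $K=Z(E)$.
\item The freeness of $K/Z(G)\cong\Q(Z(G))^\times/\Q^\times Z(G)$ gives $E=G\times C_G=H\times C_H$. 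This is a nontrivial input absent from your plan. It implies that for $L=G\cap H$, both $G/L$ and $H/L$ are free abelian.
\item Only then is the weighted theorem applied, to $G/L$ and $H/L$ with weights in $B=E/KL$. These are realized as $k_{\mathrm{aux}}(G/L)=k_{\mathrm{aux}}(H/L)$ inside one auxiliary field.
\end{enumerate}
The resulting $\beta$ is what absorbs your correction term. The difference $\beta(p)-p\in KL/L$ lifts to a homomorphism $\eta\colon G/L\to K$, and $\Psi(g)=g\,\eta(gL)$ is an isomorphism $G\to H$.

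A minor point on the center. The right mechanism is that conjugation is an order automorphism preserving the well-ordered support, so a non-central support element produces an infinite descending chain. You then still need the coset decomposition of $zb=a$ to conclude $z\in\Frac\Q[Z(G)]$, rather than merely that $z$ is a series supported on $Z(G)$.
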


The division ring isomorphism in Theorem \ref{thm:main} need not be assumed a priori to fix $\Q$, since any unital ring isomorphism between division rings of characteristic zero automatically fixes the prime field $\Q$ pointwise. Moreover, because the proof uses no special arithmetic properties of $\Q$ beyond having characteristic zero and admitting an algebraic closure, and because the arguments in Sections 7--11 depend only on the bi-orderability of the groups and their central quotients, the Ore condition, and the property that nontrivial normal subgroups intersect the center nontrivially, Theorem \ref{thm:main} extends directly in Section 12 to $k$-algebra isomorphisms over any base field $k$ of characteristic zero (Corollary \ref{cor:12.1}), as well as to arbitrary torsion-free hypercentral groups (torsion-free groups whose transfinite upper central series exhausts the whole group) and general torsion-free groups satisfying the corresponding structural conditions (Theorem \ref{thm:12.5}). Combined with the counterexample of Farkas et al.\ \cite{Farkas1982} for torsion-free supersolvable groups, this shows that within the hierarchy of solvable groups, the property that the division ring of fractions determines the isomorphism class of the group reaches its optimal boundary at the level of torsion-free nilpotent and torsion-free hypercentral groups.

To prove Theorem \ref{thm:main}, we first study in Sections 2--6 the weight-preserving isomorphism problem for rational function fields of free abelian groups under character actions. Let $k$ be a field containing an algebraically closed subfield $k_*$ of characteristic zero, let $B$ be an abelian group, and let $P$ and $Q$ be free abelian groups equipped with surjective homomorphisms $\rho_P : P \twoheadrightarrow B$ and $\rho_Q : Q \twoheadrightarrow B$. Each character $\chi \in \Hom(B, k_*^\times)$ induces a field automorphism $\sigma_\chi^P$ on $k(P)$ (and $\sigma_\chi^Q$ on $k(Q)$) via $X^a \mapsto \chi(\rho_P(a))X^a$. If a field isomorphism $\Phi : k(P) \xrightarrow{\sim} k(Q)$ satisfies $\Phi \circ \sigma_\chi^P = \sigma_\chi^Q \circ \Phi$ for all characters $\chi$, we call $\Phi$ a $B$-weighted isomorphism and write $k(P) \cong_B k(Q)$. In Section 6, we prove the following theorem:

\begin{theorem}[see Theorem \ref{thm:6.1}]\label{thm:weighted-intro}
Let $k$ be a field containing an algebraically closed subfield $k_*$ of characteristic zero, let $B$ be an arbitrary abelian group, and let $P$ and $Q$ be free abelian groups of arbitrary rank with surjective homomorphisms $\rho_P : P \twoheadrightarrow B$ and $\rho_Q : Q \twoheadrightarrow B$. If there exists a $B$-weighted $k$-field isomorphism $\Phi : k(P) \xrightarrow{\sim} k(Q)$, then there exists a weight-preserving group isomorphism
\[
\beta : P \xrightarrow{\sim} Q, \qquad \rho_Q \beta = \rho_P.
\]
\end{theorem}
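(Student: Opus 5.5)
The plan is to recover $P$ from the action of the character group $\widehat B:=\Hom(B,k_*^\times)$ on $k(P)$, and then build $\beta$ from elements that $\Phi$ is forced to carry to \emph{monomials up to a factor of weight zero}.

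\textbf{Step 1: characters separate $B$.} Let $K_P:=\ker\rho_P$ and $K_Q:=\ker\rho_Q$. Since $k_*$ is algebraically closed of characteristic zero, $k_*^\times$ is divisible, hence an injective $\Z$-module. It contains elements of every finite order and elements of infinite order. So for every $0\neq b\in B$ there is a $\chi\in\widehat B$ with $\chi(b)\neq1$: map $\langle b\rangle$ into $k_*^\times$ injectively, then extend to $B$.

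\textbf{Step 2: homogeneous elements.} Call $f\in k(P)^\times$ \emph{homogeneous of weight $b$} if $\sigma^P_\chi f=\chi(b)f$ for all $\chi$. I would prove that the weight-$0$ elements together with $0$ form exactly $k(K_P)$, and that the weight-$b$ elements form $X^{a}k(K_P)^\times$ for any $a$ with $\rho_P(a)=b$.
\begin{itemize}
\item Write $f=g/h$ with $g,h\in k[P]$ and decompose $g,h$ into $\rho_P$-isotypic parts using Step~1. This is where $\operatorname{char}=0$ and the richness of $\widehat B$ are used, since distinct characters are linearly independent.
\item Fix a monomial order on the finitely generated subgroup of $P$ involved. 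Compare highest isotypic components in $g\,\sigma_\chi h=\chi(b)\,h\,\sigma_\chi g$ to show that one may replace $g,h$ by homogeneous polynomials.
\end{itemize}
Because $\Phi$ intertwines $\sigma^P_\chi$ with $\sigma^Q_\chi$, it maps weight-$b$ elements to weight-$b$ elements. In particular $\Phi$ restricts to a $k$-isomorphism $k(K_P)\cong k(K_Q)$, and comparing transcendence degrees gives $\rank K_P=\rank K_Q$.

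\textbf{Step 3: this is not yet enough.} Equal rank of the kernels does not make the two presentations $0\to K\to P\to B\to 0$ isomorphic over $B$. For instance, $\Z\to\Z/5$ given by $1\mapsto1$ and by $1\mapsto2$ are not equivalent over $\Z/5$. So the field structure must be used further.

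\textbf{Step 4: Kummer-type view.} I would regard $k(P)/k(K_P)$ as a graded extension $\bigoplus_{b}X^{a_b}k(K_P)$ with "Galois group" $\widehat B$. Its class is governed by the cocycle $X^{a_b}X^{a_{b'}}X^{-a_{b+b'}}\in K_P$. The goal is to show $\Phi$ can be corrected so that it sends monomials to monomials times constants.

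The route I would try first is degree/valuation-theoretic:
\begin{itemize}
\item Reduce to the case where $B$ is finitely generated and $P,Q$ have finite rank, by a direct-limit/Zorn argument over finitely generated pure subgroups compatible with $\Phi$.
\item Write $\Phi(X^{a})=X^{a'}u_a$ with $u_a\in k(K_Q)^\times$ and $\rho_Q(a')=\rho_P(a)$.
\item Use the fact that $\Phi$ is a field isomorphism (so has "degree one", in the spirit of L\"uroth) to show the $u_a$ can be chosen so that $a\mapsto a'$ modulo $K_Q$ lifts to a homomorphism $P\to Q$ over $B$.
\item Then extend across $K_P\cong K_Q$ to obtain $\beta$ with $\rho_Q\beta=\rho_P$.
\end{itemize}
The key invariant to match is the extension class in $\operatorname{Ext}(B,K)$ modulo automorphisms. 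This class is encoded in which products of homogeneous elements are $k(K)$-multiples of one another.

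\textbf{Main obstacle.} Step 4, namely showing the cocycle classes agree up to an automorphism of $K$, is where I expect the difficulty. The $\Z/5$ example shows that this step carries the real content of the theorem. I would first try to prove it by tracking valuations of $k(K_Q)$ (lowest-degree terms with respect to an ordering of $K_Q$) applied to the $u_a$.
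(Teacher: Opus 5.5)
Your proposal has a genuine gap. Steps 1--3 are fine. Step 2 is correct, e.g.\ via linear independence of the characters $\chi\mapsto\chi(c)$ of $\widehat B$, although the paper never needs it. But Step 4, which you yourself say carries all the content, contains no argument, and its concrete bullets do not get traction:

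\begin{enumerate}
\item[(a)] Since $a'+K_Q$ is determined by $\rho_P(a)$, the map $a\mapsto a' \bmod K_Q$ is just $\bar\rho_Q^{-1}\circ\rho_P$, with $\bar\rho_Q:Q/K_Q\xrightarrow{\sim}B$. It lifts to a homomorphism $P\to Q$ over $B$ for trivial reasons, because $P$ is free. The problem is to find a \emph{bijective} lift. An abstract isomorphism $K_P\cong K_Q$ obtained from transcendence degree carries none of the needed information, as your own $\Z\to\Z/5$ example shows.
\item[(b)] The Kummer picture $k(P)=\bigoplus_b X^{a_b}k(K_P)$ is valid only for finite $B$. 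For $P=B=\Z$ you get $K_P=0$, and $k(x)$ is transcendental over $k(K_P)=k$.
\item[(c)] The reduction to finite rank and finitely generated $B$ is not justified. You need pure subgroups with $\Phi(k(P_0))=k(Q_0)$, and closing finitely many variables under mutual expressibility only yields countable-rank pieces (Lemmas \ref{lem:6.2}--\ref{lem:6.3}). Moreover, gluing the pieces requires passing to the quotient weight groups $B_{\alpha+1}/B_\alpha$ and correcting by block upper-triangular terms (Lemmas \ref{lem:6.4}--\ref{lem:6.6}), not a plain direct limit.
\end{enumerate}

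The missing idea is the one you mention only at the end, but applied to all of $Q$ rather than to $k(K_Q)$. Fix a translation-invariant order on $Q$ and set $f(a)=\nu(\Phi(X^a))$; in your notation this is $a'+\nu(u_a)$.
\begin{itemize}
\item By Lemma \ref{lem:2.4}, $f$ is automatically a weight-preserving homomorphism $P\to Q$.
\item If $\ker f=0$, it is already the required isomorphism (Lemma \ref{lem:2.6}). In your $\Z/5$ example, $f$ would be a nonzero, hence injective, endomorphism of $\Z$, hence an automorphism over $B$; so no weighted $\Phi$ exists there.
\item If $\ker f\neq 0$, the paper never tries to correct $\Phi$. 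When some $\delta_p(P,\ker\rho_P)$ is finite, it shows $\rank\ker f=\rank Q/f(P)<\infty$ using transcendence degree plus the mod-$p$ snake lemma. It then shows $Q/f(P)$ is finitely generated via $[Q:\Gamma]\le[F:F_1]$.
\item It then exploits the slack. After splitting, both sides are rank-$n$ free groups mapping onto a group $B_0$ generated by fewer than $n$ elements, so the two generating $n$-tuples are $\GL_n(\Z)$-equivalent. This is exactly where your $\operatorname{Ext}$-class obstruction disappears.
\item When all $\delta_p$ are infinite (countable rank), it splits off zero-weight $\Z^{(\N)}$ summands from $P$ and $Q$ and compares both with the fiber product $P\times_B Q$.
\end{itemize}
None of these ingredients appears in your plan, so as written it does not prove the statement.
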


The proof of Theorem \ref{thm:weighted-intro} is divided into three parts. Section 2 uses the divisibility of $k_*^\times$ to show that characters separate subgroups and elements, defines the lowest-exponent valuation $\nu : k(Q)^\times \to Q$ induced by a translation-invariant total ordering on $Q$, and constructs from $\Phi$ a weight-preserving group homomorphism $f : P \to Q$, $a \mapsto \nu(\Phi(X^a))$. Sections 3 and 4 treat the case where there exists a prime $p$ for which $T = \ker \rho_P$ satisfies $\delta_p(P, T) = \dim_{\Fp}(T+pP)/pP < \infty$: using transcendence degrees, the long exact sequence for $-\otimes_{\Z} \Z/p\Z$, and the Snake Lemma, we prove $\rank(\ker f) = \rank(Q/f(P)) < \infty$; finite field extension degrees then show that the quotient $Q/f(P)$ is finitely generated, and an exterior-power criterion for generating sets of finitely generated abelian groups due to Reichstein and Youssin \cite[Lemma 2.4, Corollary 2.5]{ReichsteinYoussin2000} yields a weight-preserving isomorphism. Sections 5 and 6 handle the countable-rank case where $\delta_p(P, T) = \delta_p(Q, S) = \aleph_0$ for all primes $p$ and the general arbitrary-rank case: in countable rank, the Smith normal form is used to recursively select primitive vectors with pairwise disjoint supports to split off direct summands isomorphic to $\Z^{(\N)}$, and the fiber product $W = P \times_B Q$ completes the proof; in arbitrary rank, transfinite induction and block upper-triangular matrix modifications extend the countable-rank result to the entire group.

In the second half of the paper (Sections 7--12), we return to torsion-free nilpotent groups $G$ and $H$. Sections 7--9 establish a series of structural results for $G$ and $H$, including the characterization of the center $Z(D_{\Q}(G)) = \Frac \Q[Z(G)]$ (Proposition \ref{prop:7.2}) and the quotient embedding $H/Z(H) \hookrightarrow G/Z(G)$, replace $H$ by an isomorphic subgroup of $F^\times G$ via the lowest-coset-term map on the central localization (Proposition \ref{prop:8.10}), and eliminate $\Q^\times$-scalars via the lowest-term coefficient homomorphism to obtain a common group $E = KG = KH$ with $K = Z(E)$. Combining this with the fact that the quotient group $\Delta_k(A) = k(A)^\times / (k^\times X^A)$ of any torsion-free abelian group $A$ is free abelian (proved in our earlier preprint \cite{LinWang2026}; for self-containment, a standalone proof is included in Section 9), we prove that there exist free abelian subgroups $C_G, C_H \le K$ such that $E = G \times C_G = H \times C_H$. Section 10 sets $L = G \cap H$, quotients out the isolator $I = \sqrt[E]{E'} \le L$ of the derived subgroup to obtain the torsion-free abelian group $A = E/I$, extends the constant field to $\Qbar$, and embeds both sides into a common commutative auxiliary field $\Omega = \Frac \widehat{F}[A]$ via the injective homomorphism $\Theta(a, k) = kX^{a - \bar{k}}$; we prove that $\Omega = k_{\mathrm{aux}}(G/L) = k_{\mathrm{aux}}(H/L)$ gives a weighted field isomorphism over $B = E/(KL)$, whence Theorem \ref{thm:weighted-intro} yields a weight-preserving isomorphism $\beta : G/L \xrightarrow{\sim} H/L$. Finally, Section 11 lifts the difference map $d(p) = \beta(p) - p \in KL/L$ to a homomorphism $\eta : G/L \to K$ and proves that $\Psi(g) = g\eta(gL)$ gives a group isomorphism $G \xrightarrow{\sim} H$; Section 12 extends the above results to arbitrary base fields of characteristic zero, torsion-free hypercentral groups, and general torsion-free groups satisfying the corresponding structural conditions; and Section 13 summarizes the paper and discusses several directions for future research.

\section{Characters and the Lowest-Exponent Valuation}

In this section, we establish the basic tools for weighted isomorphisms between rational function fields $k(P)$ and $k(Q)$ of free abelian groups. Using the divisibility of the multiplicative group $k_*^\times$ of an algebraically closed subfield of characteristic zero, we first show that characters separate subgroups of $B$ from elements outside them. We then fix a translation-invariant total ordering on $Q$ to define the lowest-exponent valuation $\nu : k(Q)^\times \to Q$, and use it to construct a weight-preserving group homomorphism $f : P \to Q$ from a weighted field isomorphism $\Phi : k(P) \xrightarrow{\sim} k(Q)$.

(Conventions.) Let $P$ and $Q$ be free abelian groups, let $\rho_P : P \twoheadrightarrow B$ and $\rho_Q : Q \twoheadrightarrow B$ be surjective homomorphisms, and let $B$ be an abelian group. The group operations on $P$ and $Q$ are written additively, and monomials in the group algebras $k[P]$ and $k[Q]$ are denoted by $X^a$ ($a \in P$) and $Y^q$ ($q \in Q$).

For the abelian group $B$, $\chi : B \to k_*^\times$ denotes a character, where $k$ is a field of $\operatorname{char} 0$ containing an algebraically closed field $k_* \subseteq k$ as a subfield.

\begin{lemma}\label{lem:2.1}
If $B_0 \le B$ is a subgroup and $b \notin B_0$, then there exists a character $\chi : B \to k_*^\times$ such that
\[
\chi|_{B_0} = 1 \quad \text{and} \quad \chi(b) \neq 1.
\]
\end{lemma}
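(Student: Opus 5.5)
The plan is to reduce to the quotient $\bar B = B/B_0$ and use injectivity of the divisible group $k_*^\times$. Write $\pi : B \to \bar B$ for the projection and $\bar b = \pi(b)$; since $b \notin B_0$, we have $\bar b \neq 0$. It suffices to construct a character $\psi : \bar B \to k_*^\times$ with $\psi(\bar b) \neq 1$. Then $\chi := \psi \circ \pi$ is trivial on $B_0$ and satisfies $\chi(b) = \psi(\bar b) \neq 1$, as required.

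To build $\psi$, I would first define it on the cyclic subgroup $\langle \bar b\rangle \le \bar B$ and then extend. Because $k_*$ is algebraically closed of characteristic zero, $k_*^\times$ is divisible: every polynomial $X^n - c$ with $c \neq 0$ has a root. It also contains a primitive $n$-th root of unity $\zeta_n$ for every $n \ge 1$, since $X^n - 1$ is separable in characteristic zero and so has exactly $n$ distinct roots, forming a cyclic group of order $n$. The definition on $\langle \bar b\rangle$ splits into two cases.
\begin{itemize}
\item If $\bar b$ has finite order $n > 1$, set $\psi_0(\bar b^{\,m}) = \zeta_n^m$. This is well defined because $\zeta_n^n = 1$, and $\psi_0(\bar b) = \zeta_n \neq 1$.
\item If $\bar b$ has infinite order, choose any $t \in k_*^\times$ with $t \neq 1$, for instance $t = 2$ (here characteristic zero is used), and set $\psi_0(\bar b^{\,m}) = t^m$.
\end{itemize}

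Finally, a divisible abelian group is an injective $\Z$-module (Baer's criterion), so $\psi_0$ extends to a homomorphism $\psi : \bar B \to k_*^\times$. This extension step is the only one that carries real content. It requires Zorn's lemma for arbitrary $B$, which is fine because the paper allows $B$ of any cardinality. In the Zorn argument one extends a partial homomorphism from a subgroup $C$ to $C + \langle x\rangle$ by choosing an $n$-th root of the appropriate value, where $n$ is the least positive integer with $nx \in C$. I would simply cite the injectivity of divisible groups rather than reprove it. The remaining checks are routine.
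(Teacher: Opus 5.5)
Your proof is correct and follows the paper's argument essentially step for step. You pass to $B/B_0$, define the character on $\langle \bar b\rangle$ via a primitive $n$-th root of unity or an element $\neq 1$, extend using the injectivity of the divisible group $k_*^\times$, and compose with the projection.
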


\begin{proof}
Pass to $B/B_0$, in which $\bar{b} = b + B_0$ is nonzero. The element $\bar{b}$ generates a cyclic subgroup $\langle \bar{b} \rangle$. We define a character on $\langle \bar{b} \rangle$ (not equal to $1$ at $\bar{b}$): if the order of $\bar{b}$ is $n > 1$, choose a primitive $n$-th root of unity $\zeta \in k_*$ and set $\bar{b} \longmapsto \zeta$; if the order of $\bar{b}$ is infinite, choose any $\lambda \in k_*^\times \setminus \{1\}$ and set $\bar{b} \longmapsto \lambda$. Since $k_*^\times$ is divisible, $k_*^\times$ is an injective object in the category of abelian groups, so this character extends to $B/B_0$; composing with $B \to B/B_0$ gives a character
\[
\chi : B \longrightarrow B/B_0 \longrightarrow k_*^\times.
\]
For every $b_0 \in B_0$, its image in $B/B_0$ is trivial, so $\chi(b_0) = 1$, whereas $\chi(b) = \zeta$ (or $\lambda$) $\neq 1$.
\end{proof}

For each character $\chi : B \to k_*^\times$, define a field automorphism on $k(P)$ (where $P$ is a free abelian group) by
\[
\sigma_\chi^P(X^a) = \chi(\rho_P(a))X^a, \qquad \sigma_\chi^P|_k = \id.
\]
We define $\sigma_\chi^Q$ similarly. Then, for an isomorphism $\Phi : k(P) \xrightarrow{\sim} k(Q)$, if for every $\chi$ we have a commutative diagram
\[
\begin{array}{ccc}
k(P) & \xrightarrow{\sigma_\chi^P} & k(P) \\[6pt]
\Phi \Big\downarrow & & \Big\downarrow \Phi \\[6pt]
k(Q) & \xrightarrow[\sigma_\chi^Q]{} & k(Q)
\end{array}
\]
we say that $\Phi$ is a $B$-weighted isomorphism.

Since $Q$ is free abelian, we may choose a translation-invariant total ordering $<$ on $Q$. For a nonzero element
\[
u = \sum_{q \in F} c_q Y^q \in k[Q], \qquad F \subseteq Q \text{ a finite set},
\]
we define $\nu(u) = \min\{q : c_q \neq 0\}$; in other words, $\nu(u)$ is the group element in $Q$ corresponding to the nonzero term of $u$ with the smallest exponent.

\begin{lemma}\label{lem:2.2}
For nonzero $u, v \in k[Q]$, we have
\[
\nu(uv) = \nu(u) + \nu(v).
\]
Hence $\nu$ extends uniquely to a group homomorphism
\[
\nu : k(Q)^\times \longrightarrow Q,
\]
given by $\nu(u/v) = \nu(u) - \nu(v)$, satisfying $\nu(c) = 0$ ($c \in k^\times$) and $\nu(Y^q) = q$.
\end{lemma}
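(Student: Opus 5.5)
The plan is to prove the multiplicativity on $k[Q]$ first, by comparing lowest terms. After that, the extension to $k(Q)^\times$ is the standard universal property of the group of fractions of a commutative cancellative monoid.

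Write $u = c_{q_0}Y^{q_0} + (\text{terms with exponents } > q_0)$ and $v = d_{r_0}Y^{r_0} + (\text{terms with exponents } > r_0)$, where $q_0 = \nu(u)$, $r_0 = \nu(v)$, and $c_{q_0}, d_{r_0} \neq 0$. In the product $uv$, the coefficient of $Y^{q_0+r_0}$ is $\sum c_q d_r$ over pairs with $q + r = q_0 + r_0$, $q \ge q_0$, $r \ge r_0$.

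The one order-theoretic point is translation invariance of $<$:
\begin{itemize}
\item if $q > q_0$ and $r \ge r_0$, then $q + r > q_0 + r \ge q_0 + r_0$;
\item symmetrically, if $r > r_0$ and $q \ge q_0$, then $q + r > q_0 + r_0$.
\end{itemize}
So the only contributing pair is $(q_0, r_0)$. The same inequalities show that every exponent $q + r$ appearing in $uv$ satisfies $q + r \ge q_0 + r_0$. Hence the coefficient of $Y^{q_0+r_0}$ in $uv$ is $c_{q_0}d_{r_0}$, which is nonzero because $k$ is a field. This gives $\nu(uv) = q_0 + r_0$, and in particular $uv \neq 0$, so $k[Q]$ is a domain and $k(Q)$ makes sense.

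For the extension, define $\nu(u/v) = \nu(u) - \nu(v)$ on $k(Q)^\times$. It is well defined: if $u/v = u'/v'$, then $uv' = u'v$, and applying multiplicativity gives $\nu(u) + \nu(v') = \nu(u') + \nu(v)$. It is a homomorphism because
\[
\frac{u}{v}\cdot\frac{u'}{v'} = \frac{uu'}{vv'},
\]
and $\nu$ is additive on numerators and denominators. It is unique because any homomorphism agreeing with $\nu$ on $k[Q]\setminus\{0\}$ is forced to send $u/v = u\cdot v^{-1}$ to $\nu(u) - \nu(v)$.

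The values $\nu(c) = 0$ for $c \in k^\times$ and $\nu(Y^q) = q$ are immediate from the definition on single-term elements.

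I expect no real obstacle. The only substantive step is the uniqueness of the minimal exponent pair, which rests entirely on the translation invariance of the chosen total order on $Q$.
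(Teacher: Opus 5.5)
Your proposal is correct and follows essentially the same route as the paper. Translation invariance of the order forces the lowest term of $uv$ to be the product of the lowest terms of $u$ and $v$, with nonzero coefficient, and the extension to $k(Q)^\times$ is then $\nu(u/v)=\nu(u)-\nu(v)$; you also spell out well-definedness, the homomorphism property, and uniqueness, which the paper leaves as immediate.
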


\begin{proof}
Let $q_0 = \nu(u)$ and $r_0 = \nu(v)$. Because the total ordering on $Q$ is translation-invariant, the term with the smallest exponent in the product must come from the product of $Y^{q_0}$ and $Y^{r_0}$: since $q_1 > q_2 \Rightarrow q_1 + r > q_2 + r$, any exponent $q > q_0$ appearing in $u$ satisfies $q + r > q_0 + r_0$, and similarly for any exponent $r > r_0$ appearing in $v$; hence the lowest term is $Y^{q_0}Y^{r_0} = Y^{q_0 + r_0}$.

Because the coefficients of $Y^{q_0}$ and $Y^{r_0}$ are nonzero, the coefficient of $Y^{q_0 + r_0}$ is the product of their coefficients and is also nonzero. Thus the lowest term does not vanish:
\[
\therefore \quad \nu(uv) = q_0 + r_0.
\]
The extension of $\nu$ to $k(Q)^\times$ is immediate, defined by $\nu(u/v) = \nu(u) - \nu(v)$.
\end{proof}

If $u, v \neq 0$ and $\nu(u) < \nu(v)$, then $\nu(u + v) = \nu(u)$, because $v$ has no effect on the lowest-degree term of $u$.

\begin{lemma}\label{lem:2.3}
If $0 \neq z \in k(Q)$ and there exists $b \in B$ such that for every character $\chi$ we have
\[
\sigma_\chi^Q(z) = \chi(b)z,
\]
then $\rho_Q(\nu(z)) = b$.
\end{lemma}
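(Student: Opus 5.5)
Write $z = u/v$ with $u, v \in k[Q]$ nonzero. Set $q_0 = \nu(z)$ and $b' = \rho_Q(q_0)$. We want to show $b' = b$. By Lemma \ref{lem:2.1} applied with $B_0 = 0$, it suffices to show $\chi(b) = \chi(b')$ for every character $\chi$. If $b \ne b'$, then $b - b' \notin \{0\}$, and the lemma yields a character with $\chi(b-b') \ne 1$, a contradiction.

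The key step is to compare leading terms. Each $\sigma_\chi^Q$ preserves $k[Q]$: it rescales each monomial $Y^q$ by the nonzero scalar $\chi(\rho_Q(q))$. Hence it preserves supports, and therefore it preserves $\nu$. Concretely, if $u = c_{q_1}Y^{q_1} + (\text{higher terms})$ with $q_1 = \nu(u)$, then
\[
\sigma_\chi^Q(u) = \chi(\rho_Q(q_1))\,c_{q_1}Y^{q_1} + (\text{higher terms}),
\]
and similarly for $v$, with lowest term $d_{r_1}Y^{r_1}$, where $r_1 = \nu(v)$ and $q_0 = q_1 - r_1$.

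The hypothesis $\sigma_\chi^Q(z) = \chi(b)z$ clears to the polynomial identity
\[
\sigma_\chi^Q(u)\, v = \chi(b)\, u\, \sigma_\chi^Q(v)
\]
in $k[Q]$. This holds because $\sigma_\chi^Q$ is a field automorphism, so $\sigma_\chi^Q(u/v) = \sigma_\chi^Q(u)/\sigma_\chi^Q(v)$. Both sides are products of nonzero elements of $k[Q]$. By the argument in the proof of Lemma \ref{lem:2.2}, the lowest term of a product is the product of the lowest terms. Comparing the coefficients of $Y^{q_1+r_1}$ on both sides gives
\[
\chi(\rho_Q(q_1))\,c_{q_1}d_{r_1} = \chi(b)\,c_{q_1}\,\chi(\rho_Q(r_1))\,d_{r_1}.
\]
Cancelling the nonzero factor $c_{q_1}d_{r_1}$ gives $\chi(\rho_Q(q_1 - r_1)) = \chi(b)$, that is, $\chi(b') = \chi(b)$ for all $\chi$. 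Lemma \ref{lem:2.1} then concludes $b' = b$.

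There is no real obstacle. The only point needing care is to work with the numerator and denominator in $k[Q]$, where lowest terms are literally defined, rather than with $z$ in $k(Q)$ directly. It is also implicit that the representation $u/v$ does not matter, since $\nu$ is well defined on $k(Q)^\times$ by Lemma \ref{lem:2.2}.
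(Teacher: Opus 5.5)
Your proof is correct and follows the paper's route. In both, comparing lowest-exponent coefficients in the eigenvector equation gives $\chi(\rho_Q(\nu(z)))=\chi(b)$ for every $\chi$, and Lemma~\ref{lem:2.1} with $B_0=0$ then finishes. Your cross-multiplied identity $\sigma_\chi^Q(u)\,v=\chi(b)\,u\,\sigma_\chi^Q(v)$ in $k[Q]$ also justifies a step the paper only asserts in the fraction case, namely $\chi(\rho_Q(\nu(u)))/\chi(\rho_Q(\nu(v)))=\chi(b)$, which needs such an argument because $u$ and $v$ need not individually be $\sigma_\chi^Q$-eigenvectors.
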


\begin{proof}
First suppose $z = u \in k[Q]$, with lowest-degree term $cY^{q_0}$. For $u = \sum_{i=1}^n c_i Y^{q_i}$,
\[
\sigma_\chi^Q(u) = \sum_{i=1}^n c_i \chi(\rho_Q(q_i))Y^{q_i}.
\]
Thus $\chi(\rho_Q(q_0)) = \chi(b) \Rightarrow \chi(\rho_Q(q_0) - b) = 1$.

By Lemma \ref{lem:2.1} applied with $B_0 = 0$, we obtain $\rho_Q(q_0) = b \Rightarrow \rho_Q(\nu(z)) = b$.

If $z = u/v \in k(Q)$, then $\nu(z) = \nu(u) - \nu(v)$, so
\[
\chi(\rho_Q(\nu(z))) = \chi\bigl(\rho_Q(\nu(u)) - \rho_Q(\nu(v))\bigr).
\]
Here $\sigma_\chi^Q(z) = \chi(b)z$ means that
\[
\frac{\chi(\rho_Q(\nu(u)))}{\chi(\rho_Q(\nu(v)))} = \chi\bigl(\rho_Q(\nu(u)) - \rho_Q(\nu(v))\bigr) = \chi(b),
\]
which implies $\chi(\rho_Q(\nu(z)) - b) = 1 \Rightarrow \rho_Q(\nu(z)) = b$ (again taking $B_0 = 0$).
\end{proof}

Define
\[
\begin{aligned}
f : P &\longrightarrow Q, \\
a &\longmapsto \nu\bigl(\Phi(X^a)\bigr).
\end{aligned}
\]

\begin{lemma}\label{lem:2.4}
$f$ is a homomorphism, and $\rho_Q \circ f = \rho_P$.
\end{lemma}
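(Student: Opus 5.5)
The proof has two parts, both direct consequences of Lemmas \ref{lem:2.2} and \ref{lem:2.3}, since $\Phi$ is a field isomorphism.

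\emph{Homomorphism.} We have $X^{a+a'} = X^a X^{a'}$ in $k(P)$, so multiplicativity of $\Phi$ gives $\Phi(X^{a+a'}) = \Phi(X^a)\Phi(X^{a'})$. Because $X^a$ is a unit, $\Phi(X^a)$ is a nonzero element of $k(Q)$, so $\nu$ is defined on it. Lemma \ref{lem:2.2} says $\nu$ is a group homomorphism $k(Q)^\times \to Q$, hence
\[
f(a+a') = \nu\bigl(\Phi(X^a)\bigr) + \nu\bigl(\Phi(X^{a'})\bigr) = f(a) + f(a').
\]
Also $f(0) = \nu(\Phi(1)) = \nu(1) = 0$. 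Thus $f$ is the composite of the homomorphisms $a \mapsto X^a$, $\Phi|_{k(P)^\times}$ and $\nu$.

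\emph{Weight preservation.} Fix $a \in P$ and set $z = \Phi(X^a) \neq 0$ and $b = \rho_P(a)$. For every character $\chi$, the $B$-weighted property $\Phi \circ \sigma_\chi^P = \sigma_\chi^Q \circ \Phi$ and the $k$-linearity of $\Phi$ give
\[
\sigma_\chi^Q(z) = \Phi\bigl(\sigma_\chi^P(X^a)\bigr) = \Phi\bigl(\chi(\rho_P(a))X^a\bigr) = \chi(b)\,\Phi(X^a) = \chi(b)\,z.
\]
The step $\Phi(cX^a) = c\,\Phi(X^a)$ uses that $\chi(b) \in k_* \subseteq k$ and that $\Phi$ is a $k$-isomorphism. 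Lemma \ref{lem:2.3} then yields $\rho_Q(\nu(z)) = b$, that is, $\rho_Q(f(a)) = \rho_P(a)$.

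There is no real obstacle. The only point to check is that the scalar $\chi(b)$ passes through $\Phi$, and this holds because $\Phi$ is a $k$-algebra map and the character values lie in $k_*\subseteq k$. All of the substantive work, namely the separation of elements by characters and the additivity of $\nu$, is already contained in Lemmas \ref{lem:2.1}--\ref{lem:2.3}.
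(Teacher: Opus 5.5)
Your proof is correct and follows the paper's argument: additivity comes from the multiplicativity of $\Phi$ together with Lemma \ref{lem:2.2}, and weight preservation comes from the relation $\Phi\circ\sigma_\chi^P=\sigma_\chi^Q\circ\Phi$ followed by Lemma \ref{lem:2.3}. You also state explicitly that the scalar $\chi(\rho_P(a))\in k_*\subseteq k$ passes through $\Phi$ because $\Phi$ is $k$-linear, a step the paper uses without comment.
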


\begin{proof}
We have
\[
\begin{aligned}
f(a + a') &= \nu\bigl(\Phi(X^{a+a'})\bigr) \\
&= \nu\bigl(\Phi(X^a)\Phi(X^{a'})\bigr) \\
&= f(a) + f(a').
\end{aligned}
\]
On the other hand, $\sigma_\chi^P(X^a) = \chi(\rho_P(a))X^a$. Since $\Phi \circ \sigma_\chi^P = \sigma_\chi^Q \circ \Phi$, we have
\[
\sigma_\chi^Q\bigl(\Phi(X^a)\bigr) = \Phi\bigl(\sigma_\chi^P(X^a)\bigr) = \chi(\rho_P(a))\Phi(X^a).
\]
Thus $X^a$ and $\Phi(X^a)$ have the same weight, and by Lemma \ref{lem:2.3}, $\rho_Q\bigl(\nu(\Phi(X^a))\bigr) = \rho_P(a)$ because $\Phi(X^a)$ satisfies $\sigma_\chi^Q(\Phi(X^a)) = \chi(\rho_P(a))\Phi(X^a)$.
\end{proof}

Accordingly, if we define $N = \ker f$ and $T = \ker \rho_P$, then
\[
N = \ker f \subseteq \ker \rho_P = T.
\]
Moreover, $\rho_Q(f(P)) = B$: setting $M = f(P)$, since $\rho_P = \rho_Q \circ f$ is surjective, for every $b \in B$ there exists $p \in P$ such that $\rho_P(p) = \rho_Q(f(p)) = b$; restricting the domain of $\rho_Q$ to $f(P)$ therefore still maps onto $B$, so $\rho_Q(f(P)) = B$.

Clearly $M$ is free abelian, being a subgroup of the free abelian group $Q$.

\begin{lemma}\label{lem:2.5}
There exists a free abelian subgroup $R \le P$ such that
\[
P = N \oplus R, \qquad f|_R : R \xrightarrow{\sim} M;
\]
that is, $R$ is isomorphic to $M$.
\end{lemma}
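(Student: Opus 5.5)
The plan is to split off the kernel using the fact that $M = f(P)$ is free abelian. By Lemma \ref{lem:2.4}, $f : P \to Q$ is a group homomorphism. Its image $M = f(P)$ is a subgroup of the free abelian group $Q$, and is therefore free abelian by the classical theorem that subgroups of free abelian groups are free (Dedekind's theorem; in infinite rank this needs the well-ordering argument, which we cite as standard). So we get a short exact sequence
\[
0 \longrightarrow N \longrightarrow P \xrightarrow{\ f\ } M \longrightarrow 0,
\]
with $M$ free.

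Next we construct a section. Choose a basis $\{m_i\}_{i\in I}$ of $M$; this uses the axiom of choice when $M$ has uncountable rank. For each $i$, pick some $r_i \in P$ with $f(r_i) = m_i$. Freeness of $M$ then gives a unique homomorphism $s : M \to P$ with $s(m_i) = r_i$, and $f\circ s = \id_M$ because the two maps agree on a basis. Set $R := s(M)$.

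Then we verify the decomposition by the standard splitting argument.
\begin{itemize}
\item If $x \in N \cap R$, write $x = s(m)$; then $0 = f(x) = m$, so $x = 0$.
\item Any $x \in P$ decomposes as $x = \bigl(x - s(f(x))\bigr) + s(f(x))$, where the first summand lies in $N$ and the second lies in $R$.
\end{itemize}
Hence $P = N \oplus R$. Moreover $f|_R : R \to M$ is surjective because $f\circ s = \id_M$, and injective because $R \cap N = 0$; so it is an isomorphism with inverse $s$. Finally, $R \cong M$ is free abelian; alternatively, $R$ is a subgroup of the free abelian group $P$.

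There is no real obstacle here. The only nontrivial inputs are that subgroups of free abelian groups of arbitrary rank are free, and the use of choice to pick the preimages $r_i$. Both are standard, and we only need to state them explicitly so the argument covers the arbitrary-rank setting. Note also that $N$ is free, being a subgroup of $P$; later sections may need this.
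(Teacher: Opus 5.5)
Your proof is correct and follows the same route as the paper: the paper also splits the exact sequence $0 \to N \to P \to M \to 0$ using that $M = f(P)$ is free (hence projective) as a subgroup of $Q$, and takes $R$ to be the image of a section. Your explicit construction of the section from a basis of $M$ simply spells out why that splitting exists.
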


\begin{proof}
The homomorphism $f : P \to M$ gives a short exact sequence
\[
0 \longrightarrow N \longrightarrow P \xrightarrow{f} M \longrightarrow 0.
\]
Clearly $N \to P$ is injective, $P \to M$ is surjective, and the image of $N \to P$ is $N = \ker f$, so the sequence is exact.

Since $M$ is a free abelian group, it is a projective $\Z$-module, and hence this short exact sequence splits. Choosing a section $R$ of $M$ yields
\[
P = N \oplus R.
\]
\end{proof}

\begin{lemma}\label{lem:2.6}
If $N = 0$, then $f : P \xrightarrow{\sim} Q$, and $f$ preserves $B$-weights.
\end{lemma}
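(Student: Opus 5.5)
The plan is to get injectivity and weight preservation directly from the hypothesis and Lemma~\ref{lem:2.4}, so that the real work is surjectivity. If $N=\ker f=0$, then $f:P\to Q$ is injective. Lemma~\ref{lem:2.4} already gives $\rho_Q\circ f=\rho_P$, which is the weight-preservation claim. It therefore remains to show $M=f(P)$ equals $Q$. I will do this by exploiting the surjectivity of $\Phi$ together with the valuation $\nu$ of Lemma~\ref{lem:2.2}. Since $\nu(Y^q)=q$, we have $\nu(k(Q)^\times)=Q$. Every element of $k(Q)^\times$ is $\Phi(x)$ for some $x\in k(P)^\times$, so it suffices to prove
\[
\nu\bigl(\Phi(x)\bigr)\in M \qquad\text{for all } x\in k(P)^\times .
\]

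First I record the strict ultrametric property on all of $k(Q)^\times$, not just on $k[Q]$: if $y,z\in k(Q)^\times$ and $\nu(y)<\nu(z)$, then $y+z\neq 0$ and $\nu(y+z)=\nu(y)$. To see this, write $y=u/w$ and $z=v/w$ with a common denominator $w\in k[Q]$. Then $\nu(u)=\nu(y)+\nu(w)<\nu(z)+\nu(w)=\nu(v)$ by translation invariance. The remark after Lemma~\ref{lem:2.2} gives $\nu(u+v)=\nu(u)$, and dividing by $w$ finishes. By induction, the same holds for a finite sum whose terms have pairwise distinct $\nu$-values: the sum is nonzero and its $\nu$ equals the minimum. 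Constants $c\in k^\times$ satisfy $\nu(c)=0$, so they do not change $\nu$.

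Next, take a nonzero $u=\sum_{a\in F}c_aX^a\in k[P]$ with all $c_a\neq 0$. Then $\Phi(u)=\sum_{a\in F}c_a\Phi(X^a)$, and the summand $c_a\Phi(X^a)$ has $\nu$-value $f(a)$. Injectivity of $f$ is used exactly here: the values $f(a)$ for $a\in F$ are pairwise distinct. The previous step then gives $\nu(\Phi(u))=\min_{a\in F}f(a)\in M$. For general $x=u/v$ with $u,v\in k[P]\setminus\{0\}$, the homomorphism property yields $\nu(\Phi(x))=\nu(\Phi(u))-\nu(\Phi(v))\in M$. Hence $Q=\nu(\Phi(k(P)^\times))\subseteq M$, so $f$ is surjective and $f:P\xrightarrow{\sim}Q$ is a weight-preserving isomorphism.

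I expect the main point needing care to be the passage from the ultrametric inequality for polynomials (stated in the paper) to rational functions. The common-denominator argument above should handle it, since translation invariance of the order lets us shift all values by $\nu(w)$. The rest is bookkeeping.
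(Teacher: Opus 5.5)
Your proposal is correct and follows the paper's proof essentially step for step. Injectivity from $N=0$ makes the values $f(a)=\nu(\Phi(X^a))$ pairwise distinct, so no cancellation occurs and $\nu(\Phi(k(P)^\times))\subseteq f(P)$; surjectivity of $\Phi$ together with $\nu(Y^q)=q$ then gives $Q\subseteq f(P)$, and weight preservation comes from Lemma~\ref{lem:2.4}. Your one addition is the common-denominator argument extending the strict ultrametric property from $k[Q]$ to $k(Q)^\times$, which the paper uses only implicitly when it says the lowest term of $\Phi(u)$ cannot be canceled, even though the summands $c_a\Phi(X^a)$ are rational functions rather than polynomials.
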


\begin{proof}
$N = 0 \Rightarrow f$ is injective. Take any nonzero element
\[
u = \sum_{i=1}^n c_i X^{a_i} \neq 0 \in k[P],
\]
where the $a_i$ are pairwise distinct and $c_i \neq 0$. Since $f$ is injective, the elements $f(a_i)$ are pairwise distinct. Consequently, the lowest-exponent terms of $\Phi(c_i X^{a_i})$ are pairwise distinct (as $\nu(\Phi(c_i X^{a_i})) = f(a_i)$). Thus the lowest-exponent term in $\Phi(u)$ cannot be canceled by other terms, so
\[
\nu(\Phi(u)) = \nu\bigl(\Phi(X^{a_j})\bigr) = f(a_j) \in f(P),
\]
where $X^{a_j}$ is the term minimizing $f(a_i)$.

For any fraction $u/v \in k(P)^\times$, we likewise have
\[
\nu\bigl(\Phi(u/v)\bigr) = \nu(\Phi(u)) - \nu(\Phi(v)) \in f(P).
\]
Now take any $q \in Q$. By the surjectivity of $\Phi$, there exists $z \in k(P)^\times$ such that $\Phi(z) = Y^q$, whence
\[
q = \nu(Y^q) = \nu(\Phi(z)) \in f(P).
\]
Thus $f$ is surjective, and $f$ preserves weights because $\rho_Q \circ f = \rho_P$.
\end{proof}

Throughout the remainder of our analysis of $f$, we may therefore assume that $N = \ker f \neq 0$.

\section{Finite Modulo-\texorpdfstring{$p$}{p} Dimension and Rank Computations}

Starting in this section, we analyze the kernel $N = \ker f$ and cokernel $D = Q/f(P)$ of the weight-preserving homomorphism $f : P \to Q$. We prove that if, for some prime $p$, the image of $T = \ker \rho_P$ in $P/pP$ has finite dimension $\delta_p(P, T)$, then relative transcendence degrees, the long exact sequence for reduction modulo $p$, and the Snake Lemma imply $\delta_p(P, T) = \delta_p(Q, S)$ and $\rank N = \rank(Q/f(P)) < \infty$.

For a prime $p$, define
\[
\delta_p(P, T) := \dim_{\Fp} \frac{T + pP}{pP}.
\]
Here $(T + pP)/pP$ is the image of $T$ under the projection $P \longrightarrow P/pP$, which in general is not equal to $T/pT$.

\begin{proposition}\label{prop:3.1}
If there exists a prime $p$ such that $\delta_p(P, T) < \infty$, then
\[
\rank N = \rank(Q/f(P)) < \infty.
\]
\end{proposition}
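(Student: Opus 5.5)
The plan is to first prove that $\rank N$ is finite, and then get the equality from two inequalities measured by transcendence degree, using the valuation $\nu$ of Lemma \ref{lem:2.2} pulled back along $\Phi$.

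\textbf{Step 1 (finiteness).} By Lemma \ref{lem:2.5}, $N$ is a direct summand of $P$. Hence $N/pN \to P/pP$ is injective. Since $N = \ker f \subseteq T$ (Lemma \ref{lem:2.4}), its image lies in $(T+pP)/pP$, and therefore
\[
\rank N \le \dim_{\Fp} N/pN \le \delta_p(P,T) < \infty .
\]

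\textbf{Step 2 (reduction to kernels of the weights).} Put $S = \ker \rho_Q$ and $D = Q/f(P)$, with $M = f(P)$ as before. Since $\rho_Q f = \rho_P$ is surjective, we have $Q = M + S$ and $M \cap S = f(T)$. Hence $D \cong S/f(T)$, and it suffices to compare the kernel and cokernel of $f|_T : T \to S$. I would record the sequences
\[
0 \to B[p] \to T/pT \to P/pP \to B/pB \to 0
\]
(from $-\otimes \Z/p\Z$ applied to $0 \to T \to P \to B \to 0$) and the analogous one for $S$. These are used at the end to transfer finiteness: $\delta_p(P,T) = \delta_p(Q,S)$ should follow once the ranks match, via the Snake Lemma applied to multiplication by $p$ on $0 \to N \to T \to f(T) \to 0$ and $0 \to f(T) \to S \to D \to 0$.

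\textbf{Step 3 (the inequality $\rank(Q/M) \le \rank N$).} Define the valuation $w = \nu \circ \Phi$ on $k(P)$; its value group is $Q$ and its residue field is $k$. On $k(R)$, the argument of Lemma \ref{lem:2.6} applies verbatim because $f|_R$ is injective. It gives $w(k(R)^\times) = f(R) = M$, and the residue field of $w|_{k(R)}$ is $k$. Now $k(P) = k(R)(X^{N})$ is purely transcendental of degree $\rank N$ over $k(R)$. Abhyankar's inequality for the extension $k(P)/k(R)$ then gives
\[
\rank(Q/M) + \trdeg(\text{residue extension}) \le \rank N ,
\]
and in particular $\rank D \le \rank N < \infty$.

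\textbf{Step 4 (the reverse inequality; main obstacle).} Abhyankar equality can fail in general because of immediate extensions, so the reverse inequality $\rank N \le \rank(Q/M)$ is where I expect the real difficulty. I would try the following. Let $q_1,\dots,q_r$ map to a maximal independent set in $D \otimes \Q$. Then show that $k(Q)$ is algebraic over
\[
\Phi(k(R))(Y^{q_1},\dots,Y^{q_r}),
\]
which forces $\rank N = \trdeg\bigl(k(Q)/\Phi(k(R))\bigr) \le r$.

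For each $q \in Q$, some multiple $mq$ lies in $M + \sum \Z q_i$. Multiplying $Y^{mq}$ by a suitable monomial-image $\Phi(X^{a})$ with $a \in R$ and the $Y^{\pm q_i}$ produces an element whose lowest exponent is $0$. The issue is to iterate this lowest-term cancellation and conclude algebraicity, rather than merely obtaining an infinite expansion. Here I would use the character actions. Everything in sight is a $\sigma_\chi$-eigenvector, and the weight-$0$ parts are controlled by the finitely many generators coming from $T$ modulo $p$, which is where $\delta_p(P,T) < \infty$ enters. The fallback is to bound the degree of $k(Q)$ over the subfield $\Phi(k(P'))$, where $P' = R \oplus N_0$ for a finite-rank pure $N_0$, by a finite field-extension-degree count.
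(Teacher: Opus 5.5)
Your Steps 1 and 3 are correct and agree with the paper. Step 1 is its injection $N/pN\hookrightarrow (T+pP)/pP$. Step 3 is Lemma~\ref{lem:3.2}, which the paper proves directly rather than by quoting Abhyankar: representatives of independent classes in $Q/f(P)$ give monomials algebraically independent over $\Phi(k(R))$.

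The gap is Step 4. The inequality $\rank N\le\rank(Q/M)$ is not proved, and the route you sketch cannot prove it. Algebraicity of $k(Q)$ over $\Phi(k(R))(Y^{q_1},\dots,Y^{q_r})$ is just a restatement of that inequality. Lowest-term cancellation cannot yield it, because without the hypothesis the inequality is false, and the failure is exactly an immediate transcendental extension. For example, take $B=0$, bases $(e_n)_{n\ge0}$ of $P$ and $(f_n)_{n\ge0}$ of $Q$, and an ordering with $f_0<f_1<f_2<\cdots$. Let $\Phi$ be the linear substitution $X^{e_0}\mapsto Y^{f_0}+Y^{f_1}$, $X^{e_1}\mapsto Y^{f_0}$, and $X^{e_n}\mapsto Y^{f_{n-1}}+Y^{f_n}$ for $n\ge2$. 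Then $f(P)=Q$ but $\ker f=\Z(e_0-e_1)$. With $R=\langle e_1,e_2,\dots\rangle$, $k(Q)$ has the same value group and residue field as $\Phi(k(R))$, yet is transcendental over it.

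So $\delta_p(P,T)<\infty$ must enter through a genuinely non-valuative argument. Your remark that weight-$0$ parts are ``controlled by generators coming from $T$ modulo $p$'' supplies no mechanism. The fallback degree count is circular: in the paper, finiteness of $k(Q)$ over $\Phi(k(R))(Y^{q_1},\dots,Y^{q_m})$ (Lemma~\ref{lem:4.2}(ii)) is deduced \emph{from} the rank equality.

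The missing idea is a mod-$p$ count combined with symmetry, and your Step 2 reverses its logic. The equality $\delta_p(P,T)=\delta_p(Q,S)$ is not a by-product of the rank equality; it is the engine that proves it. Let $m=\rank N$, $r=\rank D$, $a=\dim_{\Fp}D[p]$, $b=\dim_{\Fp}D/pD$ and $d=\delta_p(P,T)$.

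1. The reduction $\bar f\colon P/pP\to Q/pQ$ has $\dim\ker\bar f=m+a$. This follows from $P=N\oplus R$ and the exact sequence $0\to D[p]\to M/pM\to Q/pQ\to D/pD\to0$. Also $\coker\bar f=D/pD$.
2. Since $\rho_Qf=\rho_P$, $\bar f$ is the middle arrow of a morphism from $0\to(T+pP)/pP\to P/pP\to B/pB\to0$ to the analogous sequence for $Q$, and this morphism is the identity on $B/pB$. The Snake Lemma gives $\delta_p(Q,S)=d-(m+a)+b$.
3. Splitting $D$ into its torsion subgroup and torsion-free quotient gives $b\le a+r$. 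Hence $\delta_p(Q,S)\le d-(m-r)\le d$, using your Step 3.
4. In particular $\delta_p(Q,S)<\infty$, so the same argument applies to $\Phi^{-1}$. It yields $d\le\delta_p(Q,S)$, which forces $m\le r$.

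The count has to be carried out on the images $(T+pP)/pP$, not on $T/pT$ as in your Step 2. The group $T/pT$ exceeds $(T+pP)/pP$ by $B[p]$, which can be infinite even when $d<\infty$.
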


The remainder of this entire section is devoted to proving Proposition \ref{prop:3.1}.

Let $d := \delta_p(P, T)$. By Lemma \ref{lem:2.5}, $P = N \oplus R$ and $N \subseteq T$. Consider the natural map
\[
N/pN \longrightarrow (T + pP)/pP.
\]
If $n \in N$ satisfies $n + pP = 0$, then $n \in pP$, so
\[
\begin{aligned}
n &\in N \cap pP \\
&= N \cap (pN \oplus pR) \\
&= pN \qquad (\text{since } N \cap R = \{0\}).
\end{aligned}
\]
Thus $N/pN \longrightarrow (T + pP)/pP$ is injective, and hence
\[
m := \rank N \le d.
\]
If $m = 0$, then $N$ is a free abelian group of rank $0$, so $N = 0$, $P \cong Q$, and $Q/f(P) = 0$. Henceforth we assume $m > 0$.

Set $F = k(Q)$ and $F_R = \Phi(k(R))$ (note that $R \cong M = f(P)$). Then we have
\[
\nu(F_R^\times) = f(P).
\]
Indeed, if $0 \neq u = \sum_i c_i X^{\gamma_i} \in k[R]$ with $\gamma_i$ pairwise distinct, then $f|_R$ is injective (since $\ker(f|_R) = N \cap R = 0$). Consequently, the minimal exponents $f(\gamma_i)$ of the terms $\Phi(c_i X^{\gamma_i})$ are pairwise distinct, so the minimal exponent of $\Phi(u)$ is simply the smallest $f(\gamma_i)$. Thus $\nu(\Phi(u)) \in f(R) \subseteq f(P)$, and taking differences for fractions $u/v$ gives $\nu(F_R^\times) \subseteq f(P)$. Conversely, since $P = N \oplus R$ and $f(N) = 0$, we have $f(R) = f(P)$, and every $m \in f(P)$ can be written as $m = f(r) = \nu(\Phi(X^r))$ for some $r \in R$. Hence $\nu(F_R^\times) = f(P)$.

Since $P = N \oplus R$, choose a basis $n_1, \dots, n_m$ of $N$; then
\[
k(P) = k(R)(X^{n_1}, \dots, X^{n_m}).
\]
Applying $\Phi$ and setting $t_i = \Phi(X^{n_i})$, we have
\[
\Phi(k(P)) = \Phi\bigl(k(R)(X^{n_1}, \dots, X^{n_m})\bigr),
\]
\[
F = k(Q) = F_R(t_1, \dots, t_m).
\]
The elements $t_1, \dots, t_m$ are algebraically independent over $F_R$, so this is a purely transcendental extension and
\[
\trdeg_{F_R} F = m.
\]
(Indeed, in $P = N \oplus R$, every $a \in P$ decomposes uniquely as
\[
a = r + e_1 n_1 + \dots + e_m n_m, \qquad r \in R, \ e_i \in \Z,
\]
so $k[P] = k[R][X^{\pm n_1}, \dots, X^{\pm n_m}]$. Suppose there exists a nonzero Laurent polynomial relation
\[
F(Z_1, \dots, Z_m) = \sum_e c_e Z_1^{e_1} \cdots Z_m^{e_m} = 0,
\]
where $e = (e_1, \dots, e_m) \in \Z^m$ and $c_e \in k(R)$, with only finitely many $c_e \neq 0$. Substituting $Z_i := X^{n_i}$ gives
\[
\sum_e c_e X^{e_1 n_1 + \dots + e_m n_m} = 0.
\]
Clearing denominators by a common nonzero multiplier $d \in k[R] \setminus \{0\}$ such that $a_e = dc_e \in k[R]$, we obtain
\[
\sum_e a_e X^{e_1 n_1 + \dots + e_m n_m} = 0, \qquad a_e \in k[R].
\]
Writing each $a_e \in k[R]$ as
\[
a_e = \sum_{r \in E_e} b_{e,r} X^r, \qquad b_{e,r} \in k, \ E_e \subseteq R \text{ finite},
\]
we have
\[
\sum_e \sum_{r \in E_e} b_{e,r} X^{r + e_1 n_1 + \dots + e_m n_m} = 0.
\]
By the direct sum $P = N \oplus R$, if
\[
r + \sum_{i=1}^m e_i n_i = r' + \sum_{i=1}^m e_i' n_i,
\]
then
\[
r - r' = \sum_{i=1}^m (e_i' - e_i) n_i \in R \cap N = \{0\},
\]
so $r = r'$ and $\sum_{i=1}^m (e_i' - e_i) n_i = 0$; since $n_1, \dots, n_m$ form a basis of $N$, we have $r = r'$ and $e_i = e_i'$ for all $i$. Since the monomials $X^p$ ($p \in P$) form a $k$-basis of $k[P]$, distinct monomials are linearly independent over $k$. Thus all $b_{e,r} = 0 \Rightarrow a_e = 0 \Rightarrow c_e = 0$, a contradiction. Hence $X^{n_1}, \dots, X^{n_m}$ are algebraically independent over $k(R)$. Since $\Phi$ is a field isomorphism with $F_R = \Phi(k(R))$ and $t_i = \Phi(X^{n_i})$, if $t_1, \dots, t_m$ were algebraically dependent over $F_R$, there would exist a nonzero polynomial $G(Z_1, \dots, Z_m) \in F_R[Z_1, \dots, Z_m]$ with $G(t_1, \dots, t_m) = 0$; applying $\Phi^{-1}$ would yield a nontrivial polynomial relation among $X^{n_1}, \dots, X^{n_m}$ over $k(R)$, a contradiction! Therefore $\trdeg_{F_R} F = m$.)

\begin{lemma}\label{lem:3.2}
$\rank(Q/f(P)) \le m$.
\end{lemma}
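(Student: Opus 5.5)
We will show that any family of elements $q_1,\dots,q_s\in Q$ whose images in $Q/f(P)$ are $\Z$-linearly independent gives $s$ elements $Y^{q_1},\dots,Y^{q_s}$ of $F=k(Q)$ that are algebraically independent over $F_R$. Since $\trdeg_{F_R}F=m$ was established just above, this forces $s\le m$, hence $\rank(Q/f(P))\le m$. The tool is the valuation $\nu$ of Lemma \ref{lem:2.2} together with the identity $\nu(F_R^\times)=f(P)$ proved just before the statement.

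\textbf{Step 1 (setup).} Fix such $q_1,\dots,q_s$. Suppose there is a nonzero polynomial relation
\[
\sum_{e\in E} c_e\, Y^{e_1q_1+\dots+e_sq_s}=0,\qquad c_e\in F_R,
\]
with $E\subseteq\N^s$ finite and all $c_e\neq 0$. Each term $c_eY^{e\cdot q}$ lies in $F^\times$. Its valuation is
\[
\nu(c_e)+\sum_i e_iq_i\in f(P)+\sum_i e_iq_i .
\]

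\textbf{Step 2 (distinct cosets).} For distinct exponent vectors $e\neq e'$, the difference $\sum_i(e_i-e_i')q_i$ does not lie in $f(P)$. This is exactly the assumed linear independence of the $q_i$ modulo $f(P)$. So the valuations of distinct terms lie in distinct cosets of $f(P)$, and in particular they are pairwise distinct elements of $Q$.

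\textbf{Step 3 (contradiction).} We need the ultrametric fact: for $u,v\in F^\times$ with $\nu(u)<\nu(v)$ and $u+v\neq0$, we have $\nu(u+v)=\nu(u)$. The paper notes this only for polynomials. For fractions, write the elements over a common denominator and reduce to the polynomial case. This is the only slightly delicate point: one must check that $\nu$ on $k(Q)$ behaves like a valuation into the ordered group $Q$. It does, since it is the restriction of the lowest-term map into the Malcev--Neumann/Hahn field $k((Q))$, or it can be checked directly by clearing denominators.

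By Step 2 the valuations of the terms are pairwise distinct. Take the term with the smallest valuation; the sum then has exactly that valuation, so the sum is nonzero, contradicting the relation. Hence $Y^{q_1},\dots,Y^{q_s}$ are algebraically independent over $F_R$. Therefore $s\le\trdeg_{F_R}F=m$, and taking the supremum over independent families gives $\rank(Q/f(P))\le m$.
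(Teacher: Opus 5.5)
Your proposal is correct and follows the paper's proof essentially verbatim. Linearly independent classes in $Q/f(P)$ give monomials $Y^{q_1},\dots,Y^{q_s}$ whose terms with $F_R$-coefficients have valuations in pairwise distinct $f(P)$-cosets, so no nontrivial polynomial relation over $F_R$ can cancel, and hence $s\le\trdeg_{F_R}F=m$. Your explicit check that $\nu(u+v)=\nu(u)$ for fractions with $\nu(u)<\nu(v)$, done by clearing denominators, is correct and fills in a detail the paper leaves implicit.
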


\begin{proof}
Set $r = \rank(Q/f(P))$. Take any finitely many linearly independent elements $\bar{q}_1, \dots, \bar{q}_s$ in $Q/f(P)$, and choose representatives $q_1, \dots, q_s \in Q$. Then $Y^{q_1}, \dots, Y^{q_s}$ are algebraically independent over $F_R$: if there were a nonzero polynomial relation
\[
\sum_e a_e Y^{e_1 q_1 + \dots + e_s q_s} = 0, \qquad a_e \in F_R = \Phi(k(R)),
\]
we could apply the valuation $\nu$. Since $\nu(F_R^\times) = f(P)$, the valuation of each nonzero term lies in the coset
\[
f(P) + e_1 q_1 + \dots + e_s q_s,
\]
and distinct multi-indices $e$ correspond to distinct cosets—otherwise
\[
f(P) + e_1 q_1 + \dots + e_s q_s = f(P) + e_1' q_1 + \dots + e_s' q_s
\]
would imply $e_i = e_i'$ for all $i$ by the linear independence of $\bar{q}_1, \dots, \bar{q}_s$ in $Q/f(P)$, a contradiction. Because all nonzero terms have valuations in distinct cosets (and hence distinct valuations in $Q$), the term of smallest valuation cannot be canceled, a contradiction!

Thus $Y^{q_1}, \dots, Y^{q_s}$ are algebraically independent over $F_R$, so $s \le \trdeg_{F_R} F = m$. Since every finite linearly independent subset of $Q/f(P)$ has at most $m$ elements, we conclude that
\[
\rank(Q/f(P)) \le m.
\]
\end{proof}

Set $D = Q/f(P)$, and define
\[
a = \dim_{\Fp} D[p], \qquad b = \dim_{\Fp} D/pD,
\]
where $D[p] = \{d \in D : pd = 0\}$ is the $p$-torsion subgroup of $D$.

Tensoring the short exact sequence
\[
0 \longrightarrow f(P) \longrightarrow Q \longrightarrow D \longrightarrow 0
\]
with $\Z/p\Z$ yields the long exact sequence (recalling $f(P) = M$)
\[
\begin{aligned}
\Tor_1^{\Z}(M, \Z/p\Z) &\longrightarrow \Tor_1^{\Z}(Q, \Z/p\Z) \longrightarrow \Tor_1^{\Z}(D, \Z/p\Z) \longrightarrow \\
M/pM &\longrightarrow Q/pQ \longrightarrow D/pD \longrightarrow 0.
\end{aligned}
\]
From the resolution $0 \to \Z \xrightarrow{\times p} \Z \to \Z/p\Z \to 0$, we have $\Tor_1^{\Z}(D, \Z/p\Z) \cong D[p]$, and similarly $\Tor_1^{\Z}(M, \Z/p\Z) = M[p]$ and $\Tor_1^{\Z}(Q, \Z/p\Z) = Q[p]$. Since both $Q$ and $M = f(P)$ are free abelian, $Q[p] = M[p] = 0$, and the exact sequence above simplifies to
\[
0 \longrightarrow D[p] \longrightarrow M/pM \longrightarrow Q/pQ \longrightarrow D/pD \longrightarrow 0.
\]

Since $P = N \oplus R$ and $f|_R : R \to M$ is an isomorphism, $f$ induces modulo $p$ the map
\[
\bar{f} : P/pP \longrightarrow Q/pQ,
\]
which satisfies
\[
\dim_{\Fp} \ker \bar{f} = m + a, \qquad \dim_{\Fp} \coker \bar{f} = b.
\]
Indeed,
\[
P/pP = (N/pN) \oplus (R/pR), \qquad R/pR \cong M/pM.
\]
Since $N = \ker f$, $N/pN$ lies in $\ker \bar{f}$ with $\dim_{\Fp} N/pN = m$, while the kernel of $M/pM \to Q/pQ$ is precisely $D[p]$ of dimension $a$; hence $\ker \bar{f}$ is the direct sum of $N/pN$ and the lift of $\ker(M/pM \to Q/pQ)$ in $R/pR$, giving $\dim_{\Fp} \ker \bar{f} = m + a$. Moreover, the image of $\bar{f}$ coincides with that of $M/pM \to Q/pQ$ because $f(P) = M$ and $R \to M$ is surjective, so the cokernels agree:
\[
\coker \bar{f} = D/pD, \qquad \dim_{\Fp} \coker \bar{f} = b.
\]

Furthermore, since $\rho_P$ and $\rho_Q$ are surjective onto $B$, we have a short exact sequence (with $T = \ker \rho_P$)
\[
0 \longrightarrow \frac{T + pP}{pP} \longrightarrow P/pP \longrightarrow B/pB \longrightarrow 0.
\]
Proof: if $\rho_P(a) \in pB$, write $\rho_P(a) = pb$; by surjectivity there exists $x \in P$ with $\rho_P(x) = b$, so $\rho_P(a - px) = pb - pb = 0 \Rightarrow a - px \in T$, whence $a \in T + pP$. Conversely, if $a = t + px$ with $t \in T$, then $\rho_P(a) = \rho_P(t) + p\rho_P(x) \in pB$, so
\[
\frac{T + pP}{pP} = \ker(P/pP \to B/pB),
\]
establishing exactness. Similarly, for $S = \ker \rho_Q$, we have
\[
0 \longrightarrow \frac{S + pQ}{pQ} \longrightarrow Q/pQ \longrightarrow B/pB \longrightarrow 0.
\]
Since $\rho_Q f = \rho_P$, we have the commutative diagram with exact rows:
\[
\begin{array}{ccccccccc}
0 & \longrightarrow & (T + pP)/pP & \longrightarrow & P/pP & \longrightarrow & B/pB & \longrightarrow & 0 \\[6pt]
& & \Big\downarrow & & \Big\downarrow\vcenter{\rlap{$\scriptstyle\bar{f}$}} & & \Big\downarrow\vcenter{\rlap{$\scriptstyle\id$}} & & \\[6pt]
0 & \longrightarrow & (S + pQ)/pQ & \longrightarrow & Q/pQ & \longrightarrow & B/pB & \longrightarrow & 0
\end{array}
\]
Applying the Snake Lemma gives the exact sequence
\[
0 \longrightarrow \ker \bar{f} \longrightarrow \frac{T + pP}{pP} \longrightarrow \frac{S + pQ}{pQ} \longrightarrow \coker \bar{f} \longrightarrow 0,
\]
whose terms have dimensions
\[
m + a, \qquad \delta_p(P, T) = d, \qquad \delta_p(Q, S), \qquad b.
\]
From the injection $\ker \bar{f} \to (T + pP)/pP$, we see that
\[
m + a \le d, \qquad a < \infty,
\]
and by the additivity of dimension,
\[
\delta_p(Q, S) = d - (m + a) + b.
\]

\begin{lemma}\label{lem:3.3}
If $A$ is a torsion-free abelian group with $\rank A = r < \infty$, then
\[
\dim_{\Fp} A/pA \le r.
\]
\end{lemma}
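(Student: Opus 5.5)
We prove Lemma \ref{lem:3.3} by reducing to a free abelian subgroup of full rank and controlling the passage from that subgroup to $A$ modulo $p$. Choose a maximal linearly independent subset $a_1, \dots, a_r$ of $A$; it has exactly $r$ elements since $\rank A = r$. Let $F_0 = \langle a_1, \dots, a_r \rangle \cong \Z^r$. By maximality, $A/F_0$ is a torsion group: for every $x \in A$ there is some $n \ge 1$ with $nx \in F_0$.

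The target inequality says that every finite subset $x_1, \dots, x_s$ of $A$ whose images in $A/pA$ are $\Fp$-linearly independent satisfies $s \le r$. Since $\dim_{\Fp} A/pA$ is the supremum of such $s$, this suffices, and it also covers the case where $A$ is not finitely generated. Fix such $x_1, \dots, x_s$ and pass to the finitely generated subgroup
\[
A_1 = \langle a_1, \dots, a_r, x_1, \dots, x_s \rangle \le A.
\]
This group is torsion-free and finitely generated, so it is free abelian. It contains $F_0$, it is contained in $A$, and every element of $A_1$ has a nonzero multiple in $F_0$; hence $\rank A_1 = r$ and $A_1 \cong \Z^r$. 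Consequently $\dim_{\Fp} A_1/pA_1 = r$.

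It remains to compare $A_1/pA_1$ with $A/pA$. The inclusion $A_1 \hookrightarrow A$ induces a homomorphism $A_1/pA_1 \to A/pA$, and the images of $x_1, \dots, x_s$ in $A/pA$ lie in its image. Since these images are $\Fp$-linearly independent, we get
\[
s \le \dim_{\Fp} \operatorname{im}\bigl(A_1/pA_1 \to A/pA\bigr) \le \dim_{\Fp} A_1/pA_1 = r.
\]
Taking the supremum over all such families gives $\dim_{\Fp} A/pA \le r$.

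The argument does not need injectivity of $A_1/pA_1 \to A/pA$, which may fail, for example for $\Z \subset \Z[1/p]$. The only point that needs care is that $A_1$ is free of rank exactly $r$. Freeness follows from the structure theorem for finitely generated torsion-free abelian groups, and the rank equals $r$ because $A_1 \otimes \Q = F_0 \otimes \Q$, as $A_1/F_0$ is torsion.
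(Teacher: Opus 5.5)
Your proposal is correct and follows essentially the same route as the paper: pass to the finitely generated (hence free) subgroup generated by the coset representatives, then bound $s$ by $\dim_{\Fp}$ of the image of its reduction mod $p$ in $A/pA$. The only difference is that you also adjoin a maximal independent set to make the rank exactly $r$. This is harmless but unnecessary, since the paper's argument only needs that the subgroup has rank at most $r$.
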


\begin{proof}
Suppose there were $r + 1$ linearly independent cosets in $A/pA$, with representatives $a_1, \dots, a_{r+1} \in A$. Let $C = \langle a_1, \dots, a_{r+1} \rangle \le A$. Since $A$ is torsion-free, $C$ is a finitely generated torsion-free abelian group and hence free abelian of rank $s \le r$. Thus
\[
C \cong \Z^s, \qquad C/pC \cong (\Z/p\Z)^s,
\]
so $\dim_{\Fp}(C/pC) = s \le r$. Yet the image of $C/pC \to A/pA$ contains $r + 1$ linearly independent elements and thus has dimension at least $r + 1$, a contradiction. Therefore $\dim_{\Fp} A/pA \le r$.
\end{proof}

\begin{lemma}\label{lem:3.4}
If $\tau$ is a torsion abelian group with $\dim_{\Fp} \tau[p] = a < \infty$, then
\[
\dim_{\Fp} \tau/p\tau \le a.
\]
\end{lemma}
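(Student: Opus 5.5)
The plan is to reduce to the $p$-primary component and then compare the two dimensions through a single $\Fp$-linear map on $\tau[p]$. Write $\tau = \bigoplus_\ell \tau_\ell$ as the direct sum of its $\ell$-primary components. For a prime $\ell \neq p$, multiplication by $p$ is an automorphism of $\tau_\ell$, so $\tau_\ell = p\tau_\ell$. Hence $\tau/p\tau \cong \tau_p/p\tau_p$ and $\tau[p] = \tau_p[p]$, and we may assume $\tau$ is a $p$-group.

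Consider the map $\tau[p] \to \tau/p\tau$, $x \mapsto x + p\tau$. We cannot expect it to be surjective in general: for $\tau = \Z/p^2\Z$ it is zero. So instead we will bound $\tau/p\tau$ using the socle via a basic subgroup argument. The cleanest route is the following. Since $\tau[p]$ is finite of dimension $a$, the $p$-group $\tau$ has finite socle. A standard fact then gives $\tau \cong \bigoplus_{i=1}^{a} C_i$, where each $C_i$ is either a finite cyclic group $\Z/p^{e_i}\Z$ or a Prüfer group $\Z(p^\infty)$. This is the structure theorem for $p$-groups with finite socle (artinian $\Z_p$-modules). To justify it briefly: the divisible part $\mathrm{div}(\tau)$ is injective, so it splits off as $\tau = \mathrm{div}(\tau) \oplus \tau'$. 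The divisible part is a direct sum of Prüfer groups, and their number equals the dimension of its socle. The reduced part $\tau'$ has finite socle. A reduced $p$-group with finite socle is bounded, hence by Prüfer's theorem a direct sum of cyclic groups, finitely many because the socle is finite.

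For boundedness, which I expect to be the main obstacle, I would argue as follows. The descending chain $p^n\tau'[p] = \tau'[p] \cap p^n\tau'$ of subspaces of the finite-dimensional space $\tau'[p]$ stabilizes at some $V$. Suppose $V \neq 0$ and take $0 \neq v \in V$. Then $v \in p^n\tau'$ for all $n$. Using the finiteness of socles, a compactness (König-type) argument produces a divisible chain $v = x_0$, $px_{j+1} = x_j$ with all $x_j \in \bigcap_n p^n\tau'$. This chain generates a Prüfer subgroup of $\tau'$, contradicting reducedness. Hence $V = 0$. Since $\tau'[p]$ is finite, some $p^N\tau' \cap \tau'[p] = 0$, which forces $p^N\tau' = 0$: any nonzero element of a $p$-group has a nonzero multiple in the socle. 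So $\tau'$ is bounded.

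Given the decomposition, the count is immediate. Each $C_i$ has $\dim_{\Fp} C_i[p] = 1$, and $\dim_{\Fp} C_i/pC_i$ equals $1$ for finite cyclic $C_i$ and $0$ for Prüfer $C_i$. Since $\tau[p]$ and $\tau/p\tau$ both commute with direct sums, we get $\dim \tau/p\tau = \#\{\text{cyclic summands}\} \le a$, as required. Alternatively, the paper may only need the reduced statement. To avoid the full structure theorem, one could also argue directly: pick elements $y_1,\dots,y_{a+1}$ independent modulo $p\tau$ and let $C = \langle y_1, \dots, y_{a+1}\rangle$, a finite $p$-group. Then $\dim C[p] = \dim C/pC \ge a+1$ would follow if $C/pC \to \tau/p\tau$ had image of dimension at least $a+1$, which it does. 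Since $C[p] \subseteq \tau[p]$, this contradicts $\dim \tau[p] = a$. Here $\dim C[p] = \dim C/pC$ holds for finite abelian $p$-groups because both equal the number of cyclic factors. This second argument is short and mirrors Lemma \ref{lem:3.3}, so I would adopt it as the actual proof and drop the structure theorem entirely.
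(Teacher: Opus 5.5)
Your final argument is correct and is essentially the paper's proof. You reduce to the $p$-primary component and take $C=\langle y_1,\dots,y_{a+1}\rangle$; then $\dim_{\Fp} C[p]=\dim_{\Fp} C/pC\ge a+1$ for the finite $p$-group $C$, and $C[p]\subseteq\tau[p]$ gives the contradiction (your structure-theorem detour is also sound, but unnecessary, as you note).
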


\begin{proof}
Let $\tau_p$ be the $p$-primary component of $\tau$, namely
\[
\tau_p = \{x \in \tau : p^n x = 0 \text{ for some } n \ge 1\}.
\]
Since $\tau$ is a torsion abelian group, $\tau = \bigoplus_{q \text{ prime}} \tau_q$. On $\tau_q$ for $q \neq p$, multiplication by $p$ is an automorphism, so
\[
\tau/p\tau \cong \tau_p/p\tau_p, \qquad \tau[p] = \tau_p[p],
\]
and it suffices to consider $\tau_p$.

If $\dim_{\Fp}(\tau_p/p\tau_p) > a$, choose $a + 1$ linearly independent cosets $x_1 + p\tau_p, \dots, x_{a+1} + p\tau_p$. Let $C = \langle x_1, \dots, x_{a+1} \rangle \le \tau_p$; since each generator has $p$-power order, $C$ is a finite abelian $p$-group. By the structure theorem for finite abelian groups,
\[
C \cong \bigoplus_{i=1}^s \Z/p^{n_i}\Z \qquad (n_i \ge 1).
\]
Since $(\Z/p^{n_i}\Z)/p(\Z/p^{n_i}\Z) \cong \Z/p\Z \cong \Fp$, we have
\[
\dim_{\Fp}(C/pC) = s = \dim_{\Fp} C[p] \le a.
\]
However, the image of $C/pC \longrightarrow \tau_p/p\tau_p$ contains $a + 1$ linearly independent elements and thus has dimension at least $a + 1$, a contradiction. Hence $\dim_{\Fp} \tau/p\tau \le a$.
\end{proof}

Now let $\tau = \Tor(D)$ be the torsion subgroup of $D = Q/M$, so that $D/\tau$ is torsion-free. Consider the short exact sequence
\[
0 \longrightarrow \tau \longrightarrow D \longrightarrow D/\tau \longrightarrow 0.
\]
Because $D/\tau$ is torsion-free, reducing modulo $p$ preserves exactness:
\[
0 \longrightarrow \tau/p\tau \longrightarrow D/pD \longrightarrow (D/\tau)/p(D/\tau) \longrightarrow 0.
\]
Hence
\[
b = \dim_{\Fp} D/pD = \dim_{\Fp} \tau/p\tau + \dim_{\Fp} (D/\tau)/p(D/\tau).
\]
Since $\rank(D/\tau) = \rank D = r$, Lemma \ref{lem:3.3} gives $\dim_{\Fp} (D/\tau)/p(D/\tau) \le r$; since $\tau[p] = D[p]$, Lemma \ref{lem:3.4} gives $\dim_{\Fp} \tau/p\tau \le a$. Thus $b \le a + r$, and consequently
\[
\delta_p(Q, S) = d - m - a + b \le d - m + r \le d,
\]
where $d, m, a, b, r$ are all finite integers.

Thus $\delta_p(Q, S) \le \delta_p(P, T)$. Repeating the same argument for $\Phi^{-1} : k(Q) \to k(P)$ yields the reverse inequality $\delta_p(P, T) \le \delta_p(Q, S)$, so in fact
\[
\delta_p(P, T) = \delta_p(Q, S).
\]
In particular, the chain of inequalities above must be equalities throughout, forcing $\rank D = r = m = \rank N$ (since $d - m + r = d$). This completes the proof of Proposition \ref{prop:3.1}.

\section{Finite Generation of \texorpdfstring{$Q/M$}{Q/M} and the Proof of Theorem 4.1}

Building on the equality $\rank N = \rank(Q/M) = m < \infty$ proved in Section 3, we now use finite field extension degrees to prove that the quotient group $Q/M$ is in fact a finitely generated abelian group. This allows us to split off a finite-rank direct summand from $Q$ and apply an exterior-power property for generating sets of finitely generated abelian groups to establish the weight-preserving isomorphism theorem when $\delta_p(P, T) < \infty$.

In this section, we prove the following theorem:

\begin{theorem}\label{thm:4.1}
If there exists a prime $p$ such that $\delta_p(P, T) < \infty$, then there exists a $B$-weight-preserving isomorphism
\[
\beta : P \xrightarrow{\sim} Q, \qquad \rho_Q \beta = \rho_P.
\]
\end{theorem}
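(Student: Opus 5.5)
We start from Proposition \ref{prop:3.1}: $P = N \oplus R$ with $f|_R : R \xrightarrow{\sim} M = f(P)$ and $\rank N = \rank(Q/M) = m < \infty$. Combining the snake-lemma dimension count with equality throughout the chain of inequalities also forces $b = a + r$. The plan has three steps: show $D = Q/M$ is finitely generated, split off a finite-rank summand from $Q$ containing the ``defect'', and then build $\beta$ on that finite-rank piece with the Reichstein--Youssin criterion.

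\textbf{Step 1: $D$ is finitely generated.} Choose $q_1,\dots,q_m \in Q$ whose images form a maximal independent set in $D$. By Lemma \ref{lem:3.2}, the $Y^{q_i}$ are algebraically independent over $F_R$, so $F_0 := F_R(Y^{q_1},\dots,Y^{q_m})$ has transcendence degree $m$ over $F_R$. Since $F = F_R(t_1,\dots,t_m)$ has the same transcendence degree, $F/F_0$ is algebraic. Because $F$ is finitely generated over $F_0$, the extension $[F:F_0]$ is finite.

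Now take any finite set of cosets in $D/\langle \bar q_i\rangle$ with representatives $y_j \in Q$. The valuation argument of Lemma \ref{lem:3.2} shows that monomials $Y^{y}$ lying in distinct cosets of $M + \langle q_i\rangle$ are linearly independent over $F_0$: lowest terms cannot cancel, because $\nu(F_0^\times) = M + \langle q_i\rangle$. Hence the torsion group $D/\langle \bar q_i\rangle$ has order at most $[F:F_0]$, and so $D$ is finitely generated. This is the main obstacle. The difficulty is checking that $\nu(F_0^\times)$ is exactly $M+\langle q_i\rangle$, which requires extending the distinct-coset argument from polynomials to rational functions, and checking that linear independence of the monomials really bounds the index. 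I would handle both by clearing denominators, exactly as in the proof that $\nu(F_R^\times)=f(P)$.

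\textbf{Step 2: reduce to finite rank.} Since $D$ is finitely generated, $M$ contains a finite-rank subgroup $M_1$ and there are $q_1,\dots,q_s$ such that $Q_0 := \langle M_1, q_1,\dots,q_s\rangle$ contains every relation of $D$. After saturation and a Smith normal form argument, we may assume $Q = Q_0 \oplus Q_1$ with $Q_1 \subseteq M$ and $Q_0$ of finite rank. Set $P_1 = (f|_R)^{-1}(Q_1)$ and $P_0 = N \oplus (f|_R)^{-1}(M\cap Q_0)$. Then $P = P_0 \oplus P_1$, the map $f$ sends $P_1$ isomorphically and weight-preservingly onto $Q_1$, and $\rank P_0 = \rank Q_0 < \infty$ because $\rank N = \rank D$. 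It therefore remains to find a weight-preserving isomorphism $P_0 \to Q_0$, and this only has to respect the weights restricted to $\rho_P(P_0)$ and $\rho_Q(Q_0)$. I will arrange $Q_0$ so that $\rho_Q(Q_0) = \rho_P(P_0)$; for instance, using $\rho_Q f = \rho_P$, the image of $P_0$ is contained in that of $Q_0$, and we can enlarge $P_0$ accordingly.

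\textbf{Step 3: finite rank case.} Both $P_0$ and $Q_0$ are free of the same finite rank $n$, and both surject onto the finitely generated group $B_0 := \rho_Q(Q_0)$. Surjections $\Z^n \to B_0$ are classified up to automorphisms of $\Z^n$ by the Reichstein--Youssin criterion \cite[Lemma 2.4, Corollary 2.5]{ReichsteinYoussin2000}: the images of bases are generating $n$-tuples of $B_0$, and $n$-tuples are equivalent under $\GL_n(\Z)$ exactly when their classes in $\wedge^n$ agree up to sign, with the relevant invariant lying in $\wedge^{n}$ of the torsion part. I will match this invariant using the map $f$ restricted to $P_0$, which is weight-preserving with finite cokernel. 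Alternatively, I will use the equality $\delta_p(P,T)=\delta_p(Q,S)$ for every prime, obtained by rerunning the argument. Composing the resulting isomorphism with $f|_{P_1}$ gives $\beta$.
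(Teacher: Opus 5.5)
Your Steps 1 and 2 follow the paper's route (Lemmas~\ref{lem:4.2}--\ref{lem:4.4}). Step 3, however, has a genuine gap. You never show that the generating $n$-tuples of $B_0$ coming from bases of $P_0$ and $Q_0$ have the same class in $\bigwedge^n B_0$ up to sign, and neither of your proposed fixes works.

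\begin{itemize}
\item The map $f|_{P_0}$ does not have finite cokernel. Its kernel is $N$ and its image is $M_0 = M\cap Q_0$, so its cokernel is $Q_0/M_0 \cong Q/M = D$, which has rank $m$. This is infinite exactly in the only nontrivial case $m>0$. Even a weight-preserving embedding of finite index $d$ would only relate the two wedge classes by a factor of $\pm d$.
\item The equalities $\delta_p(P,T)=\delta_p(Q,S)$ are mod-$p$ dimension counts, so they cannot detect a class in $\bigwedge^n B_0$. Already for $n=1$ and $B_0=\Z/5\Z$, the generators $1$ and $2$ give identical $\delta_p$ data but are not $\GL_1(\Z)$-equivalent.
\end{itemize}

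The missing observation is that the invariant vanishes. Once $Q_0$ is arranged so that $\rho_Q(Q_0)=\rho_Q(M_0)=B_0$, the group $B_0$ is a quotient of the free group $M_0$, which has rank $n-m$. If $m>0$, then $B_0$ is generated by fewer than $n$ elements, so $\bigwedge^n B_0 = 0$. The Reichstein--Youssin criterion then applies with both classes equal to $0$. Equivalently, a basis of $P_0$ adapted to $N\oplus (f|_R)^{-1}(M_0)$ contains $m$ vectors of weight $0$. The case $m=0$ is handled separately by Lemma~\ref{lem:2.6}.

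This is also why the arrangement in Step 2 must be made precise. You should enlarge $Q_0$, not $P_0$. For each basis vector $q_i$ of $Q_0$, choose $m_i\in M$ with $\rho_Q(m_i)=\rho_Q(q_i)$, and move into $Q_0$ the finitely many basis vectors of $Q_1$ that occur in these $m_i$. Since those vectors lie in $M$, this enlarges $M_0$ and $P_0$ together and preserves $\rank P_0=\rank Q_0$. Enlarging $P_0$ alone, as you suggest, would destroy the rank match. The resulting equality $\rho_Q(Q_0)=\rho_Q(M_0)$ is precisely what makes $B_0$ need fewer than $n$ generators, and hence what closes Step 3.
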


If $m = \rank N = 0$, the conclusion already holds by Lemma \ref{lem:2.6}. Assume $m > 0$. By Proposition \ref{prop:3.1},
\[
\rank(Q/M) = \rank N = m.
\]
Choose $q_1, \dots, q_m \in Q$ whose images in $(Q/M) \otimes_{\Z} \Q$ form a $\Q$-basis, and set
\[
\Gamma = T_1 = M + \Z q_1 + \dots + \Z q_m, \qquad F_1 := F_R(Y^{q_1}, \dots, Y^{q_m}),
\]
where $F_R = \Phi(k(R))$ and $F = k(Q) = F_R(t_1, \dots, t_m)$. We first prove several auxiliary lemmas before completing the proof of Theorem \ref{thm:4.1}.

\begin{lemma}\label{lem:4.2}
The following statements hold:
\begin{enumerate}
\item[(i)] $Y^{q_1}, \dots, Y^{q_m}$ are algebraically independent over $F_R$;
\item[(ii)] $F/F_1$ is a finite field extension;
\item[(iii)] $\nu(F_1^\times) = \Gamma$.
\end{enumerate}
\end{lemma}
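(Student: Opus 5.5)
Part (i) is essentially Lemma \ref{lem:3.2} again. The images $\bar q_1,\dots,\bar q_m$ are $\Q$-linearly independent in $(Q/M)\otimes_{\Z}\Q$, hence $\Z$-linearly independent in $Q/M$. So the cancellation argument from the proof of Lemma \ref{lem:3.2} applies verbatim. Take any nonzero relation $\sum_e a_e Y^{e_1q_1+\dots+e_mq_m}=0$ with $a_e\in F_R$. Because $\nu(F_R^\times)=f(P)=M$, each nonzero term has valuation in the coset $M+\sum e_iq_i$, and distinct $e$ give distinct cosets. The term of smallest valuation therefore cannot cancel, a contradiction. Hence $Y^{q_1},\dots,Y^{q_m}$ are algebraically independent over $F_R$.

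For (ii), I plan to count transcendence degrees. Section 3 showed $\trdeg_{F_R}F=m$, and by (i) $\trdeg_{F_R}F_1=m$, so $F/F_1$ is algebraic. Algebraic is not yet finite, since $F=k(Q)$ may be generated by infinitely many elements. But $F=F_R(t_1,\dots,t_m)$ with $F_R\subseteq F_1$, so $F=F_1(t_1,\dots,t_m)$. Each $t_i$ is algebraic over $F_1$, because it is algebraic over $F_R(Y^{q_1},\dots,Y^{q_m})$. A field generated over $F_1$ by finitely many algebraic elements is a finite extension, which gives (ii).

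For (iii), I will prove the two inclusions separately. The inclusion $\Gamma\subseteq\nu(F_1^\times)$ is immediate: $M=\nu(F_R^\times)\subseteq\nu(F_1^\times)$ and $q_i=\nu(Y^{q_i})$, and $\nu$ is a homomorphism. For the reverse inclusion, it suffices to treat nonzero elements of the ring $F_R[Y^{\pm q_1},\dots,Y^{\pm q_m}]$, since every element of $F_1^\times$ is a quotient of two such elements and $\nu$ is a homomorphism. Write such an element as $u=\sum_e a_eY^{\sum e_iq_i}$. By the distinct-cosets argument from (i), the valuations of the nonzero terms are pairwise distinct. Hence $\nu(u)$ equals the minimal term valuation $\nu(a_e)+\sum e_iq_i\in M+\sum\Z q_i=\Gamma$. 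Quotients then land in $\Gamma$ too.

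The main obstacle is (ii). The transcendence-degree count only gives algebraicity, and finiteness hinges on observing that $F$ is finitely generated over $F_1$, via the description $F=F_R(t_1,\dots,t_m)$ from Section 3. I must also make sure that $\trdeg_{F_R}F=m$ is finite and was computed over the same base $F_R$, so that the subtraction $\trdeg_{F_1}F=m-m=0$ is legitimate. Everything else reuses the uncancellable-lowest-term mechanism already established in Lemmas \ref{lem:2.2} and \ref{lem:3.2}.
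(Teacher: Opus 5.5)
Your proposal is correct and follows the paper's proof essentially step for step. It uses the distinct-$M$-coset cancellation argument for (i), the transcendence-degree count together with $F = F_1(t_1,\dots,t_m)$ for (ii), and the two inclusions via the unique uncancelled minimal term for (iii); working with Laurent rather than ordinary polynomials in the $Y^{q_i}$ in (iii) is only a cosmetic difference.
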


\begin{proof}
(i) Suppose to the contrary that there is a nonzero polynomial relation
\[
\sum_e a_e Y^{e_1 q_1 + \dots + e_m q_m} = 0,
\]
where $e = (e_1, \dots, e_m) \in \N^m$ and $a_e \in F_R$, with only finitely many $a_e \neq 0$. For each nonzero term,
\[
\nu\bigl(a_e Y^{e_1 q_1 + \dots + e_m q_m}\bigr) = \nu(a_e) + e_1 q_1 + \dots + e_m q_m.
\]
Since $\nu(F_R^\times) = M$, we have $\nu(a_e) \in M$, so the valuation of this term lies in the coset
\[
M + e_1 q_1 + \dots + e_m q_m.
\]
For distinct $e$, these cosets are distinct: otherwise
\[
M + \sum_{i=1}^m e_i q_i = M + \sum_{i=1}^m e_i' q_i \implies \sum_{i=1}^m (e_i - e_i') q_i \in M,
\]
which in $(Q/M) \otimes_{\Z} \Q$ gives $\sum_{i=1}^m (e_i - e_i')((q_i + M) \otimes 1) = 0$, forcing $e_i = e_i'$ for all $i$ by the $\Q$-linear independence of $q_1 + M, \dots, q_m + M$. Because distinct $e$ yield valuations in distinct $M$-cosets, the term of minimum valuation in the relation is unique and cannot be canceled, a contradiction. Thus $Y^{q_1}, \dots, Y^{q_m}$ are algebraically independent over $F_R$.

(ii) Since $F_1 = F_R(Y^{q_1}, \dots, Y^{q_m}) \subseteq F$, $F$ is an extension of $F_1$. By (i), $\trdeg_{F_R} F_1 = m$. Since $\trdeg_{F_R} F = m$, we have
\[
\trdeg_{F_1} F = \trdeg_{F_R} F - \trdeg_{F_R} F_1 = 0,
\]
so $F/F_1$ is algebraic. Since $F = F_R(t_1, \dots, t_m) = F_1(t_1, \dots, t_m)$ is finitely generated over $F_1$ by algebraic elements, $F/F_1$ is a finite extension.

(iii) We first show $\nu(F_1^\times) \subseteq \Gamma$. Any nonzero element of $F_1 = F_R(Y^{q_1}, \dots, Y^{q_m})$ is a quotient of polynomials of the form
\[
u_0 = \sum_e a_e Y^{e_1 q_1 + \dots + e_m q_m}, \qquad a_e \in F_R.
\]
By the argument in (i), terms with distinct $e$ have valuations in distinct $M$-cosets, so there is a unique term $e_0$ of minimum valuation, giving
\[
\nu(u_0) = \nu(a_{e_0}) + e_{0,1} q_1 + \dots + e_{0,m} q_m \in M + \sum_{i=1}^m \Z q_i = \Gamma.
\]
Applying the same to the denominator and using $\nu(u/v) = \nu(u) - \nu(v)$ gives $\nu(F_1^\times) \subseteq \Gamma$. Conversely, $M = \nu(F_R^\times) \subseteq \nu(F_1^\times)$ and $q_i = \nu(Y^{q_i}) \in \nu(F_1^\times)$, so $\Gamma \subseteq \nu(F_1^\times)$. Therefore $\nu(F_1^\times) = \Gamma$.
\end{proof}

\begin{lemma}\label{lem:4.3}
\[
[Q : \Gamma] \le [F : F_1] < \infty.
\]
\end{lemma}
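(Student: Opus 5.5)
The plan is to use the fundamental inequality for valued field extensions, $e\cdot f \le [F:F_1]$, restricted to the ramification index. Concretely, the map $\nu : F^\times \to Q$ is a valuation on $F = k(Q)$ with value group $\nu(F^\times) = Q$. It is a genuine valuation: Lemma \ref{lem:2.2} gives multiplicativity, and the remark after it gives $\nu(u+v)\ge\min(\nu(u),\nu(v))$, first on $k[Q]$ and then, by clearing denominators, on all of $F$. By Lemma \ref{lem:4.2}(iii) its restriction to $F_1$ has value group $\Gamma$. It therefore suffices to show that if $q_1',\dots,q_s'\in Q$ lie in pairwise distinct cosets of $\Gamma$, then $Y^{q_1'},\dots,Y^{q_s'}$ are linearly independent over $F_1$. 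This gives $s\le [F:F_1]$, hence $[Q:\Gamma]\le [F:F_1]$, and $[F:F_1]$ is finite by Lemma \ref{lem:4.2}(ii).

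The linear independence is the standard valuation argument, the same one used in Lemmas \ref{lem:3.2} and \ref{lem:4.2}(i). Suppose $\sum_{j=1}^s c_j Y^{q_j'}=0$ with $c_j\in F_1$ not all zero. For each $j$ with $c_j\neq 0$ we have
\[
\nu\bigl(c_jY^{q_j'}\bigr)=\nu(c_j)+q_j'\in \Gamma+q_j'.
\]
Since the cosets $\Gamma+q_j'$ are distinct, these valuations are pairwise distinct elements of $Q$. The ultrametric property then forces
\[
\nu\Bigl(\sum_j c_jY^{q_j'}\Bigr)=\min_j \nu\bigl(c_jY^{q_j'}\bigr),
\]
so in particular the sum is nonzero, which is a contradiction.

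The only point needing care is the ultrametric property of $\nu$ on all of $k(Q)$, not merely on $k[Q]$. I will verify it directly. Write elements over a common denominator, $u/w+v/w=(u+v)/w$ with $u,v,w\in k[Q]$. The lowest term of $u+v$ has exponent at least $\min(\nu(u),\nu(v))$, with equality whenever $\nu(u)\ne\nu(v)$. Subtracting $\nu(w)$ from both sides gives the inequality for fractions, with equality in the case of distinct values. Once this is in place, the bound follows at once. Because any finite set of coset representatives of $Q/\Gamma$ has cardinality at most $[F:F_1]$, the index $[Q:\Gamma]$ is finite and bounded as claimed.
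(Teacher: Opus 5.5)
Your proof is correct and follows the paper's argument: you take representatives of distinct cosets of $\Gamma$ in $Q$, and the monomials $Y^{u_i}$ are linearly independent over $F_1$ because, by Lemma \ref{lem:4.2}(iii), the nonzero terms $a_iY^{u_i}$ have valuations in pairwise distinct $\Gamma$-cosets, so $s \le [F:F_1]$. Your explicit check that $\nu$ is ultrametric on all of $k(Q)$, not just on $k[Q]$, is correct and supplies a detail the paper uses tacitly.
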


\begin{proof}
Take representatives $u_1, \dots, u_s \in Q$ of any finitely many distinct cosets in $Q/\Gamma$. We claim that $Y^{u_1}, \dots, Y^{u_s}$ are linearly independent over $F_1$. Otherwise, there exist $a_1, \dots, a_s \in F_1$, not all zero, such that
\[
\sum_{i=1}^s a_i Y^{u_i} = 0.
\]
For each nonzero term, since $\nu(a_i) \in \Gamma$, we have $\nu(a_i Y^{u_i}) \in \Gamma + u_i$. Because $u_1, \dots, u_s$ represent distinct cosets modulo $\Gamma$, these valuations are pairwise distinct, so the sum has a unique term of minimum valuation and cannot vanish, a contradiction. Thus $Y^{u_1}, \dots, Y^{u_s}$ are linearly independent over $F_1$, whence $s \le [F : F_1] < \infty$, and therefore $[Q : \Gamma] \le [F : F_1] < \infty$.
\end{proof}

\begin{lemma}\label{lem:4.4}
There exist a finite-rank direct summand $Q_0$ of $Q$ and a complementary subgroup $Q_1$ such that
\[
Q = Q_0 \oplus Q_1, \qquad M = M_0 \oplus Q_1,
\]
where $M_0 = M \cap Q_0$, and
\[
\rho_Q(Q_0) = \rho_Q(M_0) =: B_0.
\]
\end{lemma}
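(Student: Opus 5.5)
The plan is to first upgrade Lemma~\ref{lem:4.3} to the statement that $D = Q/M$ is finitely generated, then obtain $Q_0$ and $Q_1$ by a splitting argument, and finally adjust $Q_0$ to get the weight condition. By Lemma~\ref{lem:4.3}, $[Q:\Gamma]<\infty$. Also $\Gamma/M$ is generated by the images of $q_1,\dots,q_m$. Hence $Q/M$ is an extension of the finite group $Q/\Gamma$ by the finitely generated group $\Gamma/M$, so it is finitely generated. Choose $g_1,\dots,g_s\in Q$ whose images generate $Q/M$, so that $Q = M + \langle g_1,\dots,g_s\rangle$.

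Next fix a basis $\mathcal{E}$ of the free abelian group $Q$. Only finitely many basis elements occur in the supports of the $g_i$. Let $Q_0'$ be the span of such a finite set $\mathcal{E}_0\subseteq \mathcal{E}$, and let $Q_1'$ be the span of $\mathcal{E}\setminus\mathcal{E}_0$. Then $Q_0'$ has finite rank and $Q = M + Q_0'$, so the projection $\pi : Q\to Q/Q_0'\cong Q_1'$ is still surjective when restricted to $M$. Since $Q_1'$ is free, the surjection $\pi|_M : M\twoheadrightarrow Q/Q_0'$ splits. This gives a subgroup $Q_1\le M$ mapped isomorphically onto $Q/Q_0'$, and therefore $Q = Q_0'\oplus Q_1$. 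Because $Q_1\subseteq M$, the modular law gives
\[
M = (M\cap Q_0')\oplus Q_1 .
\]
This yields the required decomposition, except for the weight condition.

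The main obstacle is the condition $\rho_Q(Q_0) = \rho_Q(M_0)$. The inclusion $\supseteq$ is automatic, so only $\rho_Q(Q_0)\subseteq\rho_Q(M_0)$ needs work. My first attempt is to keep $Q_1$ fixed and replace $Q_0'$ by the graph $Q_0 = \{x - h(x) : x\in Q_0'\}$ of a homomorphism $h: Q_0'\to Q_1$. Any such graph is again a complement to $Q_1$. Since $Q_1\subseteq M$, we still get $M = M_0\oplus Q_1$ with
\[
M_0 = M\cap Q_0 = \{y-h(y): y\in M\cap Q_0'\}.
\]
The graph is chosen basis element by basis element. For each $x$ in the finite basis $\mathcal{E}_0$, use $\rho_Q(M)=B$ (established after Lemma~\ref{lem:2.4}) to write $\rho_Q(x) = \rho_Q(m_x)$ with $m_x\in M$. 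Decompose $m_x = m_x^0 + c_x$ with $m_x^0 \in M\cap Q_0'$ and $c_x\in Q_1$, and set $h(x)=c_x$. Then $\rho_Q(x-h(x)) = \rho_Q(m_x^0)$.

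The remaining difficulty is that $m_x^0$ lies in $M\cap Q_0'$ rather than in the new $M_0$, so $h$ must be compatible on $M\cap Q_0'$. The cleanest way I would try is to choose $h$ so that $\rho_Q\circ h$ vanishes on $M\cap Q_0'$, i.e.\ impose $h(M\cap Q_0')\subseteq Q_1\cap S$. In that case $\rho_Q(M_0)=\rho_Q(M\cap Q_0')$ and the computation above closes.

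If that compatibility cannot be arranged, the fallback is to enlarge $\mathcal{E}_0$. Add the finitely many basis elements in the supports of the $m_x$ and of generators of the finitely generated group $\rho_Q(Q_0')$, then redo the splitting. This must be done while checking that the process stabilizes, which it should because all groups involved have finite rank.
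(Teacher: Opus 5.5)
Your first part is sound. Finite generation of $Q/M$ follows from Lemma~\ref{lem:4.3}, and splitting $M\twoheadrightarrow Q/Q_0'$ gives $Q=Q_0'\oplus Q_1$ with $Q_1\le M$ and $M=(M\cap Q_0')\oplus Q_1$. This actually justifies the adapted basis that the paper simply asserts.

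The gap is the weight condition: your proposal never establishes $\rho_Q(Q_0)\subseteq\rho_Q(M_0)$. The graph modification with $Q_1$ held fixed fails in general. Take $Q=\Z x\oplus\Z f$, $M=2\Z x\oplus\Z f$, $B=\Z$, $\rho_Q(x)=1$, $\rho_Q(f)=3$. Then $\rho_Q(M)=B$, and your construction may legitimately produce $Q_0'=\Z x$, $Q_1=\Z f$. For every $h$ with $h(x)=af$ one gets $Q_0=\Z(x-af)$ and $M_0=2\Z(x-af)$, so
$\rho_Q(Q_0)=(1-3a)\Z\neq 2(1-3a)\Z=\rho_Q(M_0)$.
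Your proposed remedy of forcing $\rho_Q\circ h=0$ on $M\cap Q_0'$ is not an extra degree of freedom. Your $h$ is already fixed by $h(x)=c_x$ on the basis, and in the example the remedy forces $a=0$, which also fails.

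The fallback is the right kind of move, but it is left unjustified. The reason you give for stabilization (``all groups involved have finite rank'') is not a reason. Each stage has finite rank, yet the ranks can keep growing, and the newly added vectors of $\mathcal{E}$ need not lie in $M$, so a naive iteration has no evident end.

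The missing observation is that a single enlargement suffices. Let $Q_0''$ be spanned by $\mathcal{E}_0$ together with the supports of the $m_x$. Since $Q=M+Q_0'$, the modular law gives $Q_0''=(M\cap Q_0'')+Q_0'$. Hence $\rho_Q(Q_0'')=\rho_Q(M\cap Q_0'')+\rho_Q(Q_0')\subseteq\rho_Q(M\cap Q_0'')$, because each generator $\rho_Q(x)$ of $\rho_Q(Q_0')$ equals $\rho_Q(m_x)$ with $m_x\in M\cap Q_0''$. One more splitting of $M\twoheadrightarrow Q/Q_0''$ then finishes the proof.

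The paper does essentially this, but in the adapted basis. It moves into $Q_0$ the finitely many basis vectors of $Q_1$ occurring in the components $c_x$. These vectors already lie in $M$, so they and every $m_x$ land in the new $M_0$. The decomposition $M=M_0\oplus Q_1^{\mathrm{new}}$ persists because $Q_1^{\mathrm{new}}\subseteq Q_1\subseteq M$, so no iteration is needed.
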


\begin{proof}
By Lemma \ref{lem:4.3}, $\Gamma/M$ is finitely generated and $[Q : \Gamma] < \infty$, so $Q/M$ is finitely generated. Choose a basis of $Q$ such that the images of finitely many basis elements $q_1, \dots, q_k$ generate $Q/M$. Letting $Q_0$ be the subgroup spanned by these basis elements and $Q_1$ the subgroup spanned by the remaining basis elements, we have $Q = Q_0 \oplus Q_1$ with $Q_1 \le M$, and hence
\[
M = M_0 \oplus Q_1, \qquad M_0 = M \cap Q_0.
\]
For each basis element $q_i$ of $Q_0$, since $\rho_Q(M) = \rho_Q(f(P)) = B$, we may choose $m_i \in M$ such that $\rho_Q(m_i) = \rho_Q(q_i)$. Decomposing each $m_i$ in $M = M_0 \oplus Q_1$ involves only finitely many basis elements of $Q_1$; adjoining those finitely many basis elements to $Q_0$ and removing them from $Q_1$ preserves the finite-rank decomposition $Q = Q_0 \oplus Q_1$ and $M = M_0 \oplus Q_1$ while ensuring $m_i \in M_0$ for each $i$. Thus $\rho_Q(Q_0) \subseteq \rho_Q(M_0)$, and the reverse inclusion is obvious, giving $\rho_Q(Q_0) = \rho_Q(M_0) = B_0$.
\end{proof}

Let $n = \rank Q_0$. Since $Q/M \cong Q_0/M_0$ and $\rank(Q/M) = m$, we have $\rank M_0 = n - m$. Thus $N \oplus M_0$ and $Q_0$ are both free abelian groups of rank $n$ that map surjectively onto $B_0$ under their respective weight maps, whereas $B_0 = \rho_Q(M_0)$ is generated by at most $n - m < n$ elements.

\begin{lemma}\label{lem:4.5}
There exists a weight-preserving isomorphism
\[
\gamma : N \oplus M_0 \xrightarrow{\sim} Q_0.
\]
\end{lemma}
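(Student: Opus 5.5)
We will reduce the statement to a transitivity result for generating tuples of the finitely generated abelian group $B_0$. Put $U := N \oplus M_0$ with weight map $\rho_U := \rho_P|_N + \rho_Q|_{M_0}$. Since $N \subseteq T$, we have $\rho_U(N) = 0$ and $\rho_U(U) = \rho_Q(M_0) = B_0$. On the other side, $\rho_Q|_{Q_0} : Q_0 \to B_0$ is surjective by Lemma \ref{lem:4.4}.

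First we fix bases. Since $N$ and $M_0$ are free of ranks $m$ and $n-m$ and $Q_0$ is free of rank $n$, choosing bases identifies both $U$ and $Q_0$ with $\Z^n$. The two weight maps then become generating $n$-tuples of $B_0$:
\[
\mathbf{b} = (0,\dots,0,b_{m+1},\dots,b_n), \qquad \mathbf{c} = (c_1,\dots,c_n).
\]
A weight-preserving isomorphism $\gamma$ is exactly a matrix $g \in \GL_n(\Z)$ carrying $\mathbf{b}$ to $\mathbf{c}$, acting on tuples by integral change of basis. So it suffices to prove the following claim: if $B_0$ is generated by $d$ elements and $n > d$, then $\GL_n(\Z)$ acts transitively on generating $n$-tuples of $B_0$. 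In our case $d \le n - m < n$ because $m > 0$. Note that $B_0$ is finitely generated, being the image of $Q_0$, so the finite-rank setting applies.

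To prove the claim, we normalize both tuples to a common standard form. Write $B_0 \cong \Z/e_1 \oplus \dots \oplus \Z/e_d$ in invariant-factor form, with $e_1 \mid \dots \mid e_d$ and $e_i = 0$ allowed. A generating $n$-tuple is the same as a surjection $\pi : \Z^n \to B_0$. We show every such $\pi$ is $\GL_n(\Z)$-equivalent to the standard surjection $\pi_{\mathrm{std}}$, which sends $\varepsilon_i$ to the $i$-th invariant generator for $i \le d$ and sends $\varepsilon_i$ to $0$ for $i > d$.

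Fix $\pi$. Since $\Z^n$ is free, there is an endomorphism $\phi$ of $\Z^n$ with $\pi \phi = \pi_{\mathrm{std}}$. Moreover $\phi$ may be altered by any homomorphism $\Z^n \to \ker \pi$. Tensoring with $\Q$ and reducing modulo each prime shows that $\phi$ can be chosen with image mapping isomorphically onto $\Z^n/\ker\pi$. The real obstruction to choosing $\phi$ invertible is a determinant-type class. This is the class detected by the exterior-power criterion of Reichstein and Youssin \cite[Lemma 2.4, Corollary 2.5]{ReichsteinYoussin2000}. It lives in $(\Z/e)^\times/\{\pm1\}$, where $e$ is the relevant invariant of the last nontrivial factor, and it measures which unit multiple of a fixed top wedge of generators the tuple produces.

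The main obstacle is showing that this obstruction vanishes when $n > d$; this is the key step and where we must be careful. The plan is as follows. By Corollary 2.5 of \cite{ReichsteinYoussin2000}, a tuple generates $B_0$ exactly when suitable $d \times d$ minors, read in the exterior powers $\Lambda^j(B_0)$, are units. Using elementary column operations, which lie in $\mathrm{SL}_n(\Z)$, we first bring $\pi$ to the form $(x_1,\dots,x_d,0,\dots,0)$. This is possible because the $\Z$-span of the tuple is already generated by $d$ suitably combined entries, via Smith normal form applied to a lift of the relation lattice.

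Next, for $u \in (\Z/e_d)^\times$, we replace $x_d$ by $u x_d$ using the free zero coordinate $\varepsilon_{d+1}$. Lift $u$ to an integer $\tilde u$ coprime to $e_d$, and choose $v,w$ with $\tilde u w - v e_d = 1$. The $2 \times 2$ unimodular matrix $\begin{pmatrix}\tilde u & v \\ e_d & w\end{pmatrix}$ acts on coordinates $d$ and $d+1$; it sends $x_d$ to $\tilde u x_d$ and keeps the image of $\varepsilon_{d+1}$ equal to $0$, since $e_d x_d = 0$. This lets us match the top exterior class to that of $\pi_{\mathrm{std}}$. After that, the Reichstein--Youssin criterion, or a direct induction on $d$ peeling off cyclic factors, gives the remaining equivalence. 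Composing the normalizations of $\mathbf{b}$ and of $\mathbf{c}$ yields the desired $g$, and hence the weight-preserving isomorphism $\gamma : N \oplus M_0 \xrightarrow{\sim} Q_0$.
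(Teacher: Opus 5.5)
Your reduction matches the paper's. After fixing bases, a weight-preserving $\gamma$ is an element of $\GL_n(\Z)$ carrying the generating $n$-tuple $\mathbf b$ of $B_0$ to $\mathbf c$, so you need transitivity of $\GL_n(\Z)$ on generating $n$-tuples of a group that needs fewer than $n$ generators. However, you misplace the difficulty. The Reichstein--Youssin criterion that both you and the paper cite compares $b_1\wedge\cdots\wedge b_n$ with $c_1\wedge\cdots\wedge c_n$ in $\bigwedge^n B_0$, that is, at level $n$. And $\bigwedge^n B_0=0$: $B_0$ is generated by $d<n$ elements, so every $n$-fold wedge expands into wedges with a repeated generator. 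The criterion therefore applies to $\mathbf b,\mathbf c$ directly, and the lemma follows. This is the paper's entire argument. When $n>d$ there is no obstruction to kill, and the normalization you set up is unnecessary.

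As written, that normalization also has a genuine gap.
\begin{itemize}
\item The claim that $\phi$ can be chosen so that $\phi(\Z^n)$ maps isomorphically onto $\Z^n/\ker\pi$ is false whenever $B_0$ has torsion, since $\phi(\Z^n)$ is free.
\item More seriously, for $d$-tuples the obstruction lives in $\bigwedge^d B_0\cong\Z/e_1$ (with $e_1\mid\cdots\mid e_d$), not in a group attached to $e_d$. Your move rescales $x_d$ by units modulo $e_d$. This works when $B_0$ is finite, because $(\Z/e_d)^\times\to(\Z/e_1)^\times$ is onto. But if $B_0$ has a free summand, then $e_d=0$ and you can only rescale by $\pm1$.
\item For example, take $B_0=\Z/5\oplus\Z$. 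The pairs $((1,0),(0,1))$ and $((2,0),(0,1))$ have wedges differing by $2\not\equiv\pm1 \pmod 5$, and your procedure cannot reconcile them. They do become equivalent after padding to length $3$, but not by your move.
\end{itemize}

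To repair this step:
\begin{itemize}
\item Choose the Smith normal form so that $(x_1,\dots,x_d)$ is an invariant-factor basis.
\item Rescale $x_1$, which has order $e_1$, using $\begin{pmatrix}\tilde u & v\\ e_1 & w\end{pmatrix}$ with $\tilde u w-ve_1=1$, acting on coordinates $1$ and $d+1$.
\item Apply the criterion at level $d$ in $\GL_d(\Z)$ and extend by the identity.
\end{itemize}

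Your closing sentence (``the Reichstein--Youssin criterion, or a direct induction on $d$'') never says at which level the criterion is applied. If it is level $n$, everything before it is superfluous. If it is level $d$, the rescaling step must be corrected as above.
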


\begin{proof}
Choose $n$-element bases of $N \oplus M_0$ and $Q_0$; their weight images are both $n$-element generating sets of $B_0$. Since $B_0$ can be generated by fewer than $n$ elements, we have
\[
\bigwedge^n B_0 = 0 \qquad (\text{as one generator in any } n\text{-tuple is a linear combination of the remaining } n-1).
\]
By \cite[Lemma 2.4, Corollary 2.5]{ReichsteinYoussin2000}, if two surjections $\varphi, \psi : \Z^n \twoheadrightarrow B_0$ onto a finitely generated abelian group $B_0$ give two generating tuples $b_i = \varphi(e_i)$ and $c_i = \psi(e_i)$ satisfying $b_1 \wedge \dots \wedge b_n = \pm(c_1 \wedge \dots \wedge c_n)$ in $\bigwedge^n B_0$, then there exists an invertible matrix $U \in \GL_n(\Z)$ such that $\psi \circ U = \varphi$.

Here $b_1 \wedge \dots \wedge b_n = 0 = c_1 \wedge \dots \wedge c_n$ in $\bigwedge^n B_0 = 0$. Writing $\rho_{N \oplus M_0}(n, m_0) = \rho_Q(m_0)$, there therefore exists $U \in \GL_n(\Z)$ such that
\[
\rho_Q|_{Q_0} \circ U = \rho_{N \oplus M_0}.
\]
Taking $\gamma$ to be the isomorphism induced by $U$ completes the proof.
\end{proof}

\begin{proof}[Proof of Theorem \ref{thm:4.1} (continued)]
Since $P = N \oplus R$ and $f|_R : R \to M$ is an isomorphism, we have a weight-preserving isomorphism
\[
\begin{aligned}
\id_N \oplus (f|_R) =: \varphi : P &\xrightarrow{\sim} N \oplus M, \\
n + r &\longmapsto (n, f(r))
\end{aligned}
\]
(since $f$ is a weight-preserving homomorphism, its restriction $f|_R$ is a weight-preserving isomorphism onto $M$, and the weight map on $N$ is $0$). By $M = M_0 \oplus Q_1$, we have
\[
N \oplus M = (N \oplus M_0) \oplus Q_1.
\]
Applying Lemma \ref{lem:4.5} on $N \oplus M_0$ and $\id_{Q_1}$ on $Q_1$ gives
\[
P \cong_B (N \oplus M_0) \oplus Q_1 \cong_B Q_0 \oplus Q_1 = Q,
\]
and hence $P \cong_B Q$.
\end{proof}

\section{The Countable-Rank Case}

In this section, we assume that $P$ and $Q$ have at most countable rank. If $\delta_p(P, T)$ or $\delta_p(Q, S)$ is finite for some prime $p$, Theorem \ref{thm:4.1} already gives the conclusion; hence it suffices to consider the case where $\delta_p(P, T) = \delta_p(Q, S) = \aleph_0$ for every prime $p$. Using the Smith normal form on cofinite coordinate subgroups, we recursively construct primitive zero-weight vectors with pairwise disjoint supports to split off direct summands isomorphic to $\Z^{(\N)}$ from both $P$ and $Q$, and then construct a weight-preserving isomorphism between $P$ and $Q$ via the fiber product $W = P \times_B Q$.

\begin{lemma}\label{lem:5.1}
Write $P = \bigoplus_{i \in \N} \Z e_i$. Let $F \subseteq \N$ be a finite subset, and set
\[
P_F = \bigoplus_{i \in F} \Z e_i, \qquad P_{F^c} = \bigoplus_{i \notin F} \Z e_i, \qquad T_F = T \cap P_{F^c}.
\]
If the image of $T$ in $P/pP$ is infinite-dimensional, then the image of $T_F$ in $P/pP$ is also infinite-dimensional.
\end{lemma}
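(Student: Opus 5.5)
The plan is to show that $T_F$ has finite codimension in $T$ modulo anything, so that passing from $T$ to $T_F$ only loses finitely many dimensions in $P/pP$. Consider the restriction to $T$ of the coordinate projection $\pi_F : P \to P_F$, which kills $P_{F^c}$. By definition,
\[
T_F = T \cap P_{F^c} = \ker\bigl(\pi_F|_T : T \to P_F\bigr),
\]
so $T/T_F$ embeds in $P_F \cong \Z^{|F|}$. In particular $T/T_F$ is a free abelian group of rank at most $|F|$, and it is generated by at most $|F|$ elements. I will choose $t_1, \dots, t_s \in T$ with $s \le |F|$ whose images generate $T/T_F$. This gives
\[
T = T_F + \Z t_1 + \dots + \Z t_s .
\]

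Next I pass to images in $V = P/pP$. Write $\overline{X}$ for $(X + pP)/pP$. The decomposition of $T$ above gives
\[
\overline{T} = \overline{T_F} + \operatorname{span}_{\Fp}\{\bar t_1, \dots, \bar t_s\}.
\]
From this,
\[
\dim_{\Fp} \overline{T} \le \dim_{\Fp} \overline{T_F} + s \le \dim_{\Fp} \overline{T_F} + |F| .
\]
The left side is infinite by hypothesis and $|F|$ is finite, so $\dim_{\Fp} \overline{T_F}$ must be infinite. That is exactly the claim.

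I do not expect a real obstacle. The one point to handle with care is that the images are taken in $P/pP$ and not in $T/pT$. The argument above works directly with subspaces of $P/pP$ and never uses any relation between $T_F/pT_F$ and $T/pT$, so this issue does not arise. Freeness of $T/T_F$ is also not needed; it is enough that it is generated by at most $|F|$ elements, which holds because every subgroup of $\Z^{|F|}$ is.
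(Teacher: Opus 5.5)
Your proposal is correct and follows essentially the same route as the paper. Both arguments use the coordinate projection $\pi_F$, with $\ker(\pi_F|_T) = T_F$, and the finite generation of $\pi_F(T) \le P_F$; the paper phrases this as a splitting $T = T_F \oplus C$, whereas you only need the sum $T = T_F + \Z t_1 + \dots + \Z t_s$, which yields the explicit bound $\dim_{\Fp}\overline{T} \le \dim_{\Fp}\overline{T_F} + |F|$.
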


\begin{proof}
Let $\pi_F : P \to P_F$ be the coordinate projection. The image $\pi_F(T) \le P_F$ is a subgroup of a finitely generated free abelian group and is therefore finitely generated free abelian. Hence the short exact sequence
\[
0 \longrightarrow T_F \longrightarrow T \xrightarrow{\pi_F} \pi_F(T) \longrightarrow 0
\]
splits, giving $T = T_F \oplus C$ with $C \cong \pi_F(T)$ finitely generated. Consequently, the image of $C$ in $P/pP$ is finite-dimensional; if the image of $T_F$ in $P/pP$ were also finite-dimensional, the image of $T$ in $P/pP$ would be finite-dimensional, contradicting our hypothesis.
\end{proof}

\begin{lemma}\label{lem:5.2}
Under the hypotheses of Lemma \ref{lem:5.1} (holding for all primes $p$), for any finite subset $F \subseteq \N$ there exists $v \in T_F$ whose coordinates have greatest common divisor $1$; hence $v$ is a primitive vector in $P$ and can be extended to a basis of $P$.
\end{lemma}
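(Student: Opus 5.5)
Our plan is to work one prime at a time inside the subgroup $T_F$ and then combine the primes. Lemma \ref{lem:5.1} shows that for every prime $p$ the image of $T_F$ in $P/pP$ is infinite-dimensional, hence nonzero. So for each $p$ there is some $v_p \in T_F$ with $v_p \notin pP$. Since $v_p \in P_{F^c}$ and $P = P_F \oplus P_{F^c}$, this is the same as saying that some coordinate of $v_p$ is not divisible by $p$. Let $I \subseteq T_F$ be the set of all gcds of coordinates of nonzero elements of $T_F$. We want some $v \in T_F$ with content $1$, and the key point is that content is not additive, so we cannot simply add the $v_p$.

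We proceed as follows.

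\begin{enumerate}
\item Fix any nonzero $w \in T_F$, say of content $c$. This exists: $T_F$ has nonzero image in $P/pP$, so it is nonzero. If $c = 1$ we are done. Otherwise let $p_1, \dots, p_s$ be the finitely many primes dividing $c$.
\item For each $p_j$, choose $v_j \in T_F$ with $v_j \notin p_jP$, as above.
\item Look for $v = w + \sum_j \lambda_j v_j$ with $\lambda_j \in \Z$ such that $v \notin p_jP$ for every $j$.
\end{enumerate}

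This does not yet give content $1$, since adding the $v_j$ may introduce new prime divisors of the content. To avoid this, we pass to a finitely generated setting. All the vectors involved lie in a finite coordinate subgroup $P_{F'}$ with $F' \supseteq F$ finite. Consider the finitely generated free subgroup $T' = \langle w, v_1, \dots, v_s \rangle \le T_F \cap P_{F'}$. Take the Smith normal form of the inclusion $T' \hookrightarrow P_{F'} \cap P_{F^c} \cong \Z^r$. This gives a basis $x_1, \dots, x_r$ of the ambient lattice and elementary divisors $d_1 \mid d_2 \mid \cdots \mid d_t$ such that $d_1x_1, \dots, d_tx_t$ is a basis of $T'$. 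The element $d_1x_1 \in T'$ has content exactly $d_1$, and $d_1$ divides the content of every element of $T'$.

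Now suppose a prime $q$ divides $d_1$. Then $q$ divides the content of $w$, so $q = p_j$ for some $j$. But $q$ also divides the content of $v_j$, contradicting $v_j \notin p_jP$. Hence $d_1 = 1$, and $v := x_1 \in T' \subseteq T_F$ has coprime coordinates. I expect this Smith normal form step to be the heart of the argument, and it works because only the finitely many primes dividing the content of $w$ need to be handled.

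Finally, $v$ has coordinates with gcd $1$, so $\Z v$ is a pure subgroup of $P$, and $P/\Z v$ is torsion-free. Note that $v$ has finite support, so $v \in P_{F''}$ for some finite $F''$, and $v$ is unimodular there. Hence $v$ extends to a basis of $P_{F''}$ by Smith normal form, or equivalently by the fact that $\GL_n(\Z)$ acts transitively on primitive vectors. Adjoining the $e_i$ with $i \notin F''$ then gives a basis of $P$ containing $v$.
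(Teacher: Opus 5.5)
Your proof is correct and takes the same route as the paper. In both, one finds finitely many vectors of $T_F$ with no prime dividing all of their coordinates (you fix $w$ and pick one witness $v_j$ for each prime dividing its content; the paper uses the fact that the ideal $J$ generated by all coordinates is $\Z$), and then the Smith normal form's first invariant factor $d_1=1$ produces a primitive vector in their span. The stray set $I$ in your first paragraph is ill-typed (gcds are integers, not elements of $T_F$) and never used, so delete it.
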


\begin{proof}
Let $J \le \Z$ be the ideal generated by all coordinates of all vectors in $T_F$. If $J = d\Z$ with $d > 1$, we may choose a prime divisor $p \mid d$; then every coordinate of every vector in $T_F$ is divisible by $p$, so $T_F \subseteq pP$, contradicting Lemma \ref{lem:5.1}. Hence $J = \Z$.

Since $1 \in J$ is a finite $\Z$-linear combination of coordinates, there exist finitely many vectors $t_1, \dots, t_s \in T_F$ whose coordinates collectively have greatest common divisor $1$. Each $t_i$ has only finitely many nonzero coordinates, so $\supp(t_i)$ is finite. Set
\[
A = \bigcup_{i=1}^s \supp(t_i) \subseteq \N \setminus F.
\]
In the finite-rank free abelian group $P_A = \bigoplus_{i \in A} \Z e_i$, consider the subgroup generated by $t_1, \dots, t_s$. The matrix formed by the coordinates of $t_1, \dots, t_s$ has greatest common divisor of all its entries equal to $1$, which is its first invariant factor and appears as a ``$1$'' in the upper-left corner of its Smith normal form. Therefore, the subgroup generated by $t_1, \dots, t_s$ contains a vector $v \in T_F$ that is primitive in $P_A$; since $v$ has zero coordinates outside $A$, $v$ is also primitive in $P$.
\end{proof}

\begin{proposition}\label{prop:5.3}
If $P$ has at most countable rank and satisfies $\delta_p(P, T) = \aleph_0$ for all primes $p$, then there exists a direct sum decomposition
\[
P = P_0 \oplus Z_0, \qquad Z_0 \cong \Z^{(\N)}, \qquad Z_0 \le T.
\]
\end{proposition}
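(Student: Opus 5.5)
The plan is to build, by recursion, an infinite sequence $v_0, v_1, v_2, \dots \in T$ of vectors with pairwise disjoint finite supports, each primitive in $P$ (that is, the gcd of its coordinates is $1$). We will then take $Z_0 = \bigoplus_j \Z v_j$ and show that it is a direct summand of $P$. Since $Z_0 \le T$ automatically and $Z_0 \cong \Z^{(\N)}$ once the $v_j$ are independent, everything reduces to the splitting.

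\textbf{Construction.} Fix a basis $P = \bigoplus_{i\in\N}\Z e_i$; if $P$ has finite rank, then $\delta_p(P,T) \le \rank P < \infty$, so $P$ has countably infinite rank. Start with $F_0 = \emptyset$. Given a finite set $F_j$, apply Lemma \ref{lem:5.2} to $F_j$ to obtain $v_j \in T_{F_j} = T \cap P_{F_j^c}$ with coordinate gcd $1$. Set $F_{j+1} = F_j \cup \supp(v_j)$, which is finite. This works because Lemma \ref{lem:5.2} only needs $\delta_p(P,T) = \aleph_0$ for all $p$, which is our hypothesis. By construction the supports $A_j := \supp(v_j)$ are pairwise disjoint. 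Hence the $v_j$ are linearly independent, and $Z_0 \cong \Z^{(\N)}$.

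\textbf{Splitting.} This is the main point, since an infinite family of primitive vectors need not span a summand in general. Disjointness of supports makes it work blockwise. Let $A_\infty = \bigcup_j A_j$ and $P_{A_j} = \bigoplus_{i\in A_j}\Z e_i$. Then
\[
P = \Bigl(\bigoplus_j P_{A_j}\Bigr) \oplus \bigoplus_{i\notin A_\infty}\Z e_i .
\]
In each finite-rank block $P_{A_j}$, the vector $v_j$ is primitive, so $\Z v_j$ is a direct summand: $P_{A_j}/\Z v_j$ is finitely generated and torsion-free, hence free. Choose a complement $C_j$ with $P_{A_j} = \Z v_j \oplus C_j$. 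Setting
\[
P_0 = \bigoplus_j C_j \oplus \bigoplus_{i\notin A_\infty}\Z e_i
\]
gives $P = P_0 \oplus Z_0$. This is an internal direct sum of subgroups of the blocks, so no limiting argument is needed.

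\textbf{Remaining checks.} We need primitivity of $v_j$ inside $P_{A_j}$ itself, not just in $P$. This holds because the coordinate gcd is $1$ and all coordinates lie in $A_j$; a primitive vector of a finite-rank free group extends to a basis, as in Lemma \ref{lem:5.2}. I expect the only subtle step to be ensuring that the recursion never stalls. That is exactly what Lemma \ref{lem:5.1} guarantees: removing finitely many coordinates keeps the image of $T$ infinite-dimensional mod every $p$, so Lemma \ref{lem:5.2} applies at every stage.
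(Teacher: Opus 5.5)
Your proposal is correct and follows essentially the same route as the paper. Both arguments apply Lemma \ref{lem:5.2} recursively to obtain primitive vectors in $T$ with pairwise disjoint finite supports, split off $\Z v_j$ inside each finite block $P_{A_j}$, and reassemble the blocks into $P = P_0 \oplus Z_0$. Your remark that the hypothesis forces $P$ to have countably infinite rank, so that a basis indexed by $\N$ exists, is a small point the paper leaves implicit.
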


\begin{proof}
By Lemma \ref{lem:5.2}, for any finite subset $F \subseteq \N$ there exists a primitive vector $v \in T_F$. We recursively construct a sequence of primitive vectors $v_1, v_2, \dots \in T$ with pairwise disjoint supports:
\begin{itemize}
\item First take $F_0 = \varnothing$; by Lemma \ref{lem:5.2}, there exists a primitive vector $v_1 \in T$, and $A_1 = \supp(v_1)$ is finite.
\item Suppose $v_1, \dots, v_{n-1} \in T$ have been constructed with finite, pairwise disjoint supports $A_i = \supp(v_i)$ ($1 \le i \le n-1$). Set $F_{n-1} = \bigcup_{i=1}^{n-1} A_i$, which is finite; by Lemma \ref{lem:5.2}, we may choose a primitive vector $v_n \in T_{F_{n-1}}$, whose support $A_n = \supp(v_n)$ is finite and satisfies $A_n \cap F_{n-1} = \varnothing$.
\end{itemize}
This produces primitive vectors $v_1, v_2, \dots \in T$ whose supports $A_i = \supp(v_i)$ are finite and pairwise disjoint.

For each $i \ge 1$, set $P_{A_i} = \bigoplus_{j \in A_i} \Z e_j$. Since $v_i$ is primitive in $P$ and supported in $A_i$, $v_i$ is primitive in $P_{A_i}$ and thus extends to a basis of $P_{A_i}$, yielding a complementary subgroup $C_i \le P_{A_i}$ such that $P_{A_i} = \Z v_i \oplus C_i$. Since the subsets $A_i$ are pairwise disjoint, we have
\[
\begin{aligned}
P &= \left(\bigoplus_{i \ge 1} P_{A_i}\right) \oplus \left(\bigoplus_{j \notin \bigcup_{i \ge 1} A_i} \Z e_j\right) = \left(\bigoplus_{i \ge 1} (\Z v_i \oplus C_i)\right) \oplus \left(\bigoplus_{j \notin \bigcup_{i \ge 1} A_i} \Z e_j\right) \\
&= \left(\bigoplus_{i \ge 1} \Z v_i\right) \oplus \left(\bigoplus_{i \ge 1} C_i \oplus \bigoplus_{j \notin \bigcup_{i \ge 1} A_i} \Z e_j\right).
\end{aligned}
\]
Setting $Z_0 = \bigoplus_{i \ge 1} \Z v_i$ and $P_0 = \left(\bigoplus_{i \ge 1} C_i\right) \oplus \left(\bigoplus_{j \notin \bigcup_{i \ge 1} A_i} \Z e_j\right)$ gives $P = P_0 \oplus Z_0$, with $Z_0 \le T$ and $Z_0 \cong \bigoplus_{i \ge 1} \Z \cong \Z^{(\N)}$.
\end{proof}

\begin{theorem}\label{lem:5.4}
If $P$ and $Q$ have at most countable rank and there exists a $B$-weighted isomorphism $\Phi : k(P) \xrightarrow{\sim} k(Q)$, then there exists a weight-preserving group isomorphism
\[
\beta : P \xrightarrow{\sim} Q, \qquad \rho_Q \beta = \rho_P.
\]
\end{theorem}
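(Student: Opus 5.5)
The plan is a swindle-type argument through the fiber product of $P$ and $Q$ over $B$. The weighted isomorphism $\Phi$ is needed only to dispose of the case where some $\delta_p$ is finite; after that the argument is purely group-theoretic.

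\emph{Reduction to the case $\delta_p = \aleph_0$ on both sides.} If $\delta_p(P,T)<\infty$ for some prime $p$, Theorem \ref{thm:4.1} gives $\beta$ directly. If instead $\delta_p(Q,S)<\infty$ for some $p$, with $S=\ker\rho_Q$, apply Theorem \ref{thm:4.1} to the $B$-weighted isomorphism $\Phi^{-1}:k(Q)\to k(P)$. This yields a weight-preserving isomorphism $Q\to P$, and its inverse is the required $\beta$. Since $P$ and $Q$ have at most countable rank, every $\delta_p$ is at most $\aleph_0$. Hence we may assume $\delta_p(P,T)=\delta_p(Q,S)=\aleph_0$ for all primes $p$. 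Proposition \ref{prop:5.3} then gives decompositions
\[
P=P_0\oplus Z_0,\qquad Q=Q_0\oplus Z_0',\qquad Z_0\le T,\quad Z_0'\le S,\quad Z_0\cong Z_0'\cong\Z^{(\N)}.
\]

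\emph{The fiber product.} Let $W=P\times_B Q=\{(x,y)\in P\oplus Q:\rho_P(x)=\rho_Q(y)\}$ and give it the weight $\rho_W(x,y)=\rho_P(x)=\rho_Q(y)$. As a subgroup of $P\oplus Q$, $W$ is free abelian of at most countable rank.
\begin{itemize}
\item The projection $\mathrm{pr}_1:W\to P$ is surjective because $\rho_Q$ is surjective. Its kernel is $0\times S$.
\item Since $P$ is free, $\mathrm{pr}_1$ splits by some section $s$, so $W=s(P)\oplus(0\times S)$.
\item This decomposition is weight-compatible. We have $\rho_W(s(x))=\rho_P(x)$, and $0\times S$ has weight $0$.
\end{itemize}
Therefore $W\cong_B P\oplus S$, where $S$ carries the zero weight. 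Symmetrically, $W\cong_B Q\oplus T$, where $T$ carries the zero weight.

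\emph{Absorbing the kernels.} The group $S$ is free of at most countable rank, being a subgroup of $Q$. Hence $Z_0\oplus S\cong\Z^{(\N)}\cong Z_0$ as abstract groups. Both sides carry the zero weight, because $Z_0\le T$, so any such isomorphism is weight-preserving. This gives
\[
P\oplus S=P_0\oplus(Z_0\oplus S)\cong_B P_0\oplus Z_0=P,
\]
and likewise $Q\oplus T\cong_B Q$ using $Z_0'$. Chaining these,
\[
P\cong_B P\oplus S\cong_B W\cong_B Q\oplus T\cong_B Q,
\]
which is the desired $\beta$ with $\rho_Q\beta=\rho_P$.

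\emph{Main obstacle.} The delicate point is checking that each identification preserves weights. This concerns in particular the splitting of $W$, where one must confirm that $\rho_W$ restricted to $s(P)$ is exactly $\rho_P$, and that the added kernel summand has weight $0$. The absorption step needs the zero-weight summand $Z_0$ to be of full infinite countable rank; this is precisely what Proposition \ref{prop:5.3} supplies under the hypothesis $\delta_p=\aleph_0$ for all $p$.
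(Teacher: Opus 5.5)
Your proposal is correct and follows the paper's proof essentially step for step. Like the paper, you reduce via Theorem \ref{thm:4.1} to the case $\delta_p=\aleph_0$ for all $p$ on both sides, take the splittings from Proposition \ref{prop:5.3}, identify $P\oplus S\cong_B W\cong_B Q\oplus T$ through the fiber product, and absorb the zero-weight kernel into $Z_0\cong\Z^{(\N)}$. Two points you make explicit are left implicit in the paper: you apply Theorem \ref{thm:4.1} to $\Phi^{-1}$ when only $\delta_p(Q,S)$ is finite, and you note that $S$ need only be free of at most countable rank.
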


\begin{proof}
It suffices to consider the case where $\delta_p(P, T) = \delta_p(Q, S) = \aleph_0$ for all primes $p$. By Proposition \ref{prop:5.3}, there exist decompositions $P = P_0 \oplus Z_P$ and $Q = Q_0 \oplus Z_Q$ with $Z_P \cong Z_Q \cong \Z^{(\N)}$, $Z_P \le T = \ker \rho_P$, and $Z_Q \le S = \ker \rho_Q$.

Define the fiber product
\[
W = \{(p, q) \in P \oplus Q : \rho_P(p) = \rho_Q(q)\}.
\]
The projection $\pi_P : W \longrightarrow P$, $(p, q) \mapsto p$, is surjective: for any $p \in P$, since $\rho_Q$ is surjective onto $B$, there exists $q \in Q$ with $\rho_Q(q) = \rho_P(p)$, so $(p, q) \in W$. The kernel of $\pi_P : W \to P$ is clearly $0 \oplus S \cong S$. Since $P$ is free abelian, the short exact sequence
\[
0 \longrightarrow S \longrightarrow W \xrightarrow{\pi_P} P \longrightarrow 0
\]
splits. Equip $W$ with the weight map $\rho_W(p, q) = \rho_P(p) = \rho_Q(q)$. Any section $s : P \to W$ satisfying $\pi_P \circ s = \id_P$ automatically preserves weights because $\rho_W(s(p)) = \rho_P(\pi_P(s(p))) = \rho_P(p)$, while $S \hookrightarrow W$ has weight identically $0$. Thus
\[
\begin{aligned}
P \oplus S &\xrightarrow{\sim} W, \\
(p, t) &\longmapsto s(p) + (0, t)
\end{aligned}
\]
is a weight-preserving isomorphism, and by symmetry $W \cong_B P \oplus S \cong_B Q \oplus T$.

Since $\delta_p(P, T) = \delta_p(Q, S) = \aleph_0$ and $P, Q$ have at most countable rank, $T$ and $S$ are free abelian groups of countably infinite rank with weight identically $0$. Because $Z_P, S \cong \Z^{(\N)}$ both have weight $0$, we have $Z_P \oplus S \cong_B \Z^{(\N)} \cong_B Z_P$, and hence
\[
P \oplus S \cong_B P_0 \oplus Z_P \oplus S \cong_B P_0 \oplus Z_P = P.
\]
Similarly $Q \oplus T \cong_B Q$, and therefore
\[
P \cong_B W \cong_B Q.
\]
\end{proof}

\section{The Arbitrary-Rank Case}

In this section, we extend the countable-rank weight-preserving isomorphism theorem of Section 5 to free abelian groups $P$ and $Q$ of arbitrary rank. Because every rational function involves only finitely many basis variables, a countable back-and-forth closure allows us to construct a continuous increasing chain of index sets $(I_\alpha, J_\alpha)_{\alpha \le \Lambda}$ whose new free direct summands at each successor stage have at most countable rank, and we extend the weight-preserving isomorphism stage by stage via the countable-rank theorem and block upper-triangular corrections.

In this section, we prove the main commutative theorem:

\begin{theorem}\label{thm:6.1}
Let $P$ and $Q$ be free abelian groups of arbitrary rank, and let $\rho_P : P \twoheadrightarrow B$ and $\rho_Q : Q \twoheadrightarrow B$ be surjective homomorphisms. If there exists a $B$-weighted $k$-field isomorphism
\[
\Phi : k(P) \xrightarrow{\sim} k(Q),
\]
then there exists a weight-preserving group isomorphism
\[
\beta : P \xrightarrow{\sim} Q, \qquad \rho_Q \beta = \rho_P.
\]
\end{theorem}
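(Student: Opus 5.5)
The plan is to reduce Theorem \ref{thm:6.1} to the countable-rank Theorem \ref{lem:5.4} by a transfinite filtration, exploiting the fact that every element of $k(P)$ involves only finitely many basis monomials. First we dispose of the easy case. If $\delta_p(P,T)<\infty$ for some prime $p$, Theorem \ref{thm:4.1} already applies; its proof used no countability assumption. By symmetry (applying everything to $\Phi^{-1}$) we may therefore assume $\delta_p(P,T)=\delta_p(Q,S)=\infty$ for every prime $p$.

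Fix bases $\{e_i\}_{i\in I}$ of $P$ and $\{e'_j\}_{j\in J}$ of $Q$. For subsets $I'\subseteq I$, $J'\subseteq J$ write $P_{I'}$, $Q_{J'}$ for the spanned summands and $B_{I'}=\rho_P(P_{I'})$.

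\textbf{Step 1: a countable back-and-forth closure.} Call a pair $(I',J')$ \emph{closed} if it satisfies three conditions:
\begin{itemize}
\item $\Phi(k(P_{I'}))=k(Q_{J'})$;
\item $\rho_P(P_{I'})=\rho_Q(Q_{J'})$;
\item each $e_i$ with $i\in I'$ has $\rho_P(e_i)$ hit by some element of $Q_{J'}$, and symmetrically.
\end{itemize}
Starting from any countable pair, we alternately add the finitely many indices occurring in $\Phi(X^{e_i})$ and in $\Phi^{-1}(Y^{e'_j})$, together with indices witnessing the weight conditions. After $\omega$ rounds this gives a countable closed pair.

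Using a well-ordering of $I\sqcup J$, we then build a continuous increasing chain $(I_\alpha,J_\alpha)_{\alpha\le\Lambda}$ of closed pairs with the following properties: $(I_0,J_0)=(\varnothing,\varnothing)$, unions are taken at limit stages, $(I_\Lambda,J_\Lambda)=(I,J)$, and each successor difference $I_{\alpha+1}\setminus I_\alpha$, $J_{\alpha+1}\setminus J_\alpha$ is countable. Closedness is preserved under unions, so the limit stages are automatic.

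\textbf{Step 2: the successor stage.} Write $P_{\alpha+1}=P_{I_\alpha}\oplus P'_\alpha$ and $Q_{\alpha+1}=Q_{J_\alpha}\oplus Q'_\alpha$, where $P'_\alpha,Q'_\alpha$ have countable rank, and put $B_\alpha=\rho_P(P_{I_\alpha})$. The restriction of $\Phi$ is a $B_{\alpha+1}$-weighted isomorphism $k(P_{\alpha+1})\cong k(Q_{\alpha+1})$ that carries the subfield $k(P_{I_\alpha})$ onto $k(Q_{J_\alpha})$. Base-changing to the field $K=k(P_{I_\alpha})\cong k(Q_{J_\alpha})$ identifies this with a weighted isomorphism $K(P'_\alpha)\cong K(Q'_\alpha)$. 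Only characters of $B_{\alpha+1}$ trivial on $B_\alpha$ act $K$-linearly, so the induced weights live in $B_{\alpha+1}/B_\alpha$.

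Theorem \ref{lem:5.4} applies with $K$ in place of $k$, since $K$ still contains $k_*$. It yields $\beta'_\alpha:P'_\alpha\xrightarrow{\sim}Q'_\alpha$ that preserves weights modulo $B_\alpha$. For each basis vector $x$ of $P'_\alpha$ the discrepancy $\rho_P(x)-\rho_Q(\beta'_\alpha x)$ lies in $B_\alpha=\rho_Q(Q_{J_\alpha})$, so we can choose $c(x)\in Q_{J_\alpha}$ with $\rho_Q(c(x))$ equal to that discrepancy. We then define
\[
\beta_{\alpha+1}=\begin{pmatrix}\beta_\alpha & c\\ 0&\beta'_\alpha\end{pmatrix}:P_{I_\alpha}\oplus P'_\alpha\longrightarrow Q_{J_\alpha}\oplus Q'_\alpha .
\]
This map is block upper-triangular with invertible diagonal blocks, hence an isomorphism, and it is exactly weight-preserving. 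At limit stages we take the union of the compatible $\beta_\alpha$, and $\beta=\beta_\Lambda$ is the required isomorphism.

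\textbf{Main obstacle.} The delicate point is the successor step, specifically the claim that the restricted $\Phi$ genuinely induces a weighted isomorphism of the \emph{relative} function fields over $K$ with surjective weight maps $P'_\alpha,Q'_\alpha\twoheadrightarrow B_{\alpha+1}/B_\alpha$. For this the closure conditions must be chosen carefully, so that both weight maps remain surjective, and we need the observation that the hypotheses of Sections 2--5 only require the base field to contain $k_*$.

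If this relative reduction proves awkward, the fallback is to run the fiber-product argument of Theorem \ref{lem:5.4} directly in arbitrary rank. One would show $W\cong_B P\oplus S\cong_B Q\oplus T$, and then use the filtration only to produce weight-zero free summands $Z_P\le T$, $Z_Q\le S$ of ranks $\operatorname{rank}S$ and $\operatorname{rank}T$, by applying Proposition \ref{prop:5.3} stagewise. The isomorphism $Z_P\oplus S\cong Z_P$ then absorbs $S$, and symmetrically for $Q$.
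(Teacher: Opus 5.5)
Your proposal is correct and follows essentially the same route as the paper. It uses a countable back-and-forth closure to build a continuous chain $(I_\alpha,J_\alpha)$ with $\Phi(k(P_{I_\alpha}))=k(Q_{J_\alpha})$, applies Theorem \ref{lem:5.4} over the base field $K_\alpha$ with weights in $B_{\alpha+1}/B_\alpha$ (characters of $B_{\alpha+1}/B_\alpha$ extended to $B$ trivially on $B_\alpha$), makes a block upper-triangular correction lifting the weight discrepancy into $Q_{J_\alpha}$, and takes unions at limit stages. The differences are only cosmetic: the paper gets $\rho_P(P_{I_\alpha})=\rho_Q(Q_{J_\alpha})$ for free from the field equality by a character argument (Lemma \ref{lem:6.4}), which settles your ``main obstacle'' and makes your extra weight-closure conditions, the preliminary reduction to $\delta_p=\infty$, and the fallback unnecessary.
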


Choose bases $(e_i)_{i \in I}$ of $P$ and $(f_j)_{j \in J}$ of $Q$, and set $x_i = X^{e_i}$ and $y_j = \Phi^{-1}(Y^{f_j}) \in k(P)$. Then
\[
k(P) = k(x_i : i \in I) = k(y_j : j \in J),
\]
and because $\Phi$ preserves weights, both generating sets on $k(P)$ are acted upon by the same character automorphisms $\sigma_\chi^P$.

\begin{lemma}\label{lem:6.2}
Suppose $I_0 \subseteq I$ and $J_0 \subseteq J$ satisfy
\[
k(x_i : i \in I_0) = k(y_j : j \in J_0).
\]
Given a coordinate $x_{i_*}$ or $y_{j_*}$ not lying in $k(x_i : i \in I_0) = k(y_j : j \in J_0)$, there exist subsets $I_1 \supseteq I_0$ and $J_1 \supseteq J_0$ such that:
\begin{enumerate}
\item[(i)] $i_* \in I_1$ (or $j_* \in J_1$);
\item[(ii)] $I_1 \setminus I_0$ and $J_1 \setminus J_0$ are at most countable;
\item[(iii)] $k(x_i : i \in I_1) = k(y_j : j \in J_1)$.
\end{enumerate}
\end{lemma}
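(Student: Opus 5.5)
We will prove Lemma \ref{lem:6.2} by a countable back-and-forth closure that uses only one finiteness fact: every element of $k(P)$ is a rational function in finitely many of the $x_i$, and likewise in finitely many of the $y_j$. Since $k(P)=k(x_i : i\in I)$, each $y_j$ lies in $k(x_i : i\in A_j)$ for some finite $A_j\subseteq I$. Since $k(P)=k(y_j : j\in J)$ (because $\Phi$ is surjective and $Y^{f_j}$ generate $k(Q)$), each $x_i$ lies in $k(y_j : j\in C_i)$ for some finite $C_i\subseteq J$. We fix such choices $i\mapsto C_i$ and $j\mapsto A_j$ once and for all.

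Suppose the given new coordinate is $x_{i_*}$; the case $y_{j_*}$ is symmetric. We build increasing sequences of sets recursively.
\[
I^{(0)}=I_0\cup\{i_*\},\qquad J^{(0)}=J_0\cup C_{i_*},
\]
\[
I^{(n+1)}=I^{(n)}\cup\bigcup_{j\in J^{(n)}\setminus J_0}A_j,\qquad J^{(n+1)}=J^{(n)}\cup\bigcup_{i\in I^{(n+1)}\setminus I_0}C_i .
\]
We then set $I_1=\bigcup_n I^{(n)}$ and $J_1=\bigcup_n J^{(n)}$. By induction, $I^{(n)}\setminus I_0$ and $J^{(n)}\setminus J_0$ are finite, since each step adds a finite union of finite sets. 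Hence $I_1\setminus I_0$ and $J_1\setminus J_0$ are countable, which gives (ii). Property (i) holds by construction.

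For (iii), we show that $k(x_i : i\in I_1)\subseteq k(y_j : j\in J_1)$.
\begin{itemize}
\item For $i\in I_0$, we have $x_i\in k(x_i:i\in I_0)=k(y_j:j\in J_0)\subseteq k(y_j:j\in J_1)$.
\item For $i\in I_1\setminus I_0$, the index $i$ lies in some $I^{(n)}\setminus I_0$. Then $C_i\subseteq J^{(n)}\subseteq J_1$, so $x_i\in k(y_j:j\in J_1)$.
\end{itemize}
The reverse inclusion is symmetric. For $j\in J_0$ we use the hypothesis. For $j\in J^{(n)}\setminus J_0$ we have $A_j\subseteq I^{(n+1)}\subseteq I_1$. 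Both fields are generated over $k$ by their coordinates, so they coincide.

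The only delicate point is bookkeeping, not mathematics. The closure must treat old indices in $I_0$ and $J_0$ through the hypothesis rather than through the chosen finite supports. Otherwise the added sets could fail to be countable when $I_0$ is uncountable. Restricting the recursion to the new indices handles this. No weight considerations are needed here: the lemma is purely about generation of subfields, and the compatibility with $\sigma_\chi$ is automatic because both generating families consist of weight vectors.
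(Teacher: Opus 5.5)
Your proof is correct and follows essentially the same route as the paper. Both fix finite supports once and for all, run an alternating countable closure that recurses only on indices outside $I_0$ and $J_0$, and check the two inclusions generator by generator, using the hypothesis $k(x_i : i\in I_0)=k(y_j : j\in J_0)$ for the old indices.
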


\begin{proof}
We give the proof for $x_{i_*}$ (the case of $y_{j_*}$ is identical). For each $i \in I$, choose a finite subset $F(i) \subseteq J$ such that $x_i \in k(y_j : j \in F(i))$; for example, if $x_i = \frac{y_2 + y_5}{y_8}$, we may take $F(i) = \{2, 5, 8\}$. Similarly, for each $j \in J$, choose a finite subset $G(j) \subseteq I$ such that $y_j \in k(x_i : i \in G(j))$. The fact that any rational function involves only finitely many variables guarantees the existence of $F(i)$ and $G(j)$.

Define recursively $S_0 = \{i_*\}$, $T_0 = \varnothing$, and for $n \ge 0$:
\[
\begin{aligned}
T_{n+1} &= T_n \cup \bigcup_{i \in S_n} (F(i) \setminus J_0), \\
S_{n+1} &= S_n \cup \bigcup_{j \in T_{n+1}} (G(j) \setminus I_0).
\end{aligned}
\]
In other words, $T_{n+1}$ adjoins to $T_n$ all new $y_j$-indices outside $J_0$ needed to express the variables $x_i$ appearing in $S_n$, while $S_{n+1}$ adjoins to $S_n$ all new $x_i$-indices outside $I_0$ needed to express the variables $y_j$ appearing in $T_{n+1}$.

Since each $F(i)$ and $G(j)$ is finite, each $S_n$ and $T_n$ is finite, so the unions
\[
S = \bigcup_{n=0}^\infty S_n, \qquad T' = \bigcup_{n=0}^\infty T_n
\]
are at most countable. Setting $I_1 = I_0 \cup S$ and $J_1 = J_0 \cup T'$, we see that $I_1 \setminus I_0$ and $J_1 \setminus J_0$ are at most countable and $i_* \in I_1$. By construction, $F(i) \subseteq J_1$ for every $i \in S$, so $x_i \in k(y_j : j \in J_1)$ for all $i \in S$ as well as for all $i \in I_0$; hence
\[
k(x_i : i \in I_1) \subseteq k(y_j : j \in J_1).
\]
By the same token, $k(y_j : j \in J_1) \subseteq k(x_i : i \in I_1)$, so the two fields are equal.
\end{proof}

\begin{lemma}\label{lem:6.3}
There exist an ordinal $\Lambda$ and a continuous increasing family of index pairs $(I_\alpha, J_\alpha)_{\alpha \le \Lambda}$ such that:
\begin{enumerate}
\item[(i)] $I_0 = J_0 = \varnothing$, $I_\Lambda = I$, and $J_\Lambda = J$;
\item[(ii)] for every $\alpha \le \Lambda$,
\[
K_\alpha := k(x_i : i \in I_\alpha) = k(y_j : j \in J_\alpha);
\]
\item[(iii)] at each successor stage, $I_{\alpha+1} \setminus I_\alpha$ and $J_{\alpha+1} \setminus J_\alpha$ are at most countable;
\item[(iv)] at limit ordinals $\lambda$, $I_\lambda = \bigcup_{\alpha < \lambda} I_\alpha$ and $J_\lambda = \bigcup_{\alpha < \lambda} J_\alpha$.
\end{enumerate}
\end{lemma}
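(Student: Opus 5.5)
The plan is to build the family by transfinite recursion, using Lemma \ref{lem:6.2} at successor stages and unions at limit stages. First fix a well-ordering of the disjoint union $I \sqcup J$, say an enumeration $(c_\xi)_{\xi < \kappa}$ with $\kappa = |I \sqcup J|$ (if $I \sqcup J$ is finite or countable, a single application of the construction below already suffices). Set $I_0 = J_0 = \varnothing$; condition (ii) holds trivially there since both sides equal $k$.

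\emph{Successor step.} Given $(I_\alpha, J_\alpha)$ with $k(x_i : i \in I_\alpha) = k(y_j : j \in J_\alpha)$, stop and set $\Lambda = \alpha$ if $I_\alpha = I$ and $J_\alpha = J$. Otherwise, let $c$ be the least element in the enumeration with $c \in (I \setminus I_\alpha) \sqcup (J \setminus J_\alpha)$. If the corresponding coordinate ($x_{i_*}$ or $y_{j_*}$) does not lie in $K_\alpha$, apply Lemma \ref{lem:6.2} to obtain $(I_{\alpha+1}, J_{\alpha+1})$ containing $c$, with countable increments and equal generated fields. If the coordinate already lies in $K_\alpha$, simply adjoin its index. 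This preserves (ii): for instance, if $x_{i_*} \in K_\alpha$, then $k(x_i : i \in I_\alpha \cup \{i_*\}) = K_\alpha = k(y_j : j \in J_\alpha)$. (Alternatively, the recursion in the proof of Lemma \ref{lem:6.2} never used the hypothesis that the coordinate lies outside $K_\alpha$, so it can be invoked verbatim.) Either way (iii) holds, and at least one new index is absorbed.

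\emph{Limit step.} Define $I_\lambda, J_\lambda$ as unions, which gives (iv). For (ii), I will argue that $k(x_i : i \in I_\lambda) = \bigcup_{\alpha<\lambda} K_\alpha = k(y_j : j \in J_\lambda)$. The union of an increasing chain of subfields is a field. It contains every $x_i$ with $i \in I_\lambda$ and every $y_j$ with $j \in J_\lambda$. Any element of $k(x_i : i \in I_\lambda)$ is a rational expression in finitely many $x_i$, all of which lie in some single $I_\alpha$ with $\alpha < \lambda$, so the element lies in $K_\alpha$; symmetrically for the $y_j$.

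\emph{Termination.} Each successor stage adds at least the least missing index, so the map sending $\alpha$ to that added index is injective into $\kappa$. Hence the recursion must halt at some ordinal $\Lambda$ with cardinality at most $\kappa$, and at that point $I_\Lambda = I$ and $J_\Lambda = J$, giving (i). Moreover, by choosing the least missing element each time, every index $c_\xi$ is absorbed by stage roughly $\xi+1$ in the enumeration order, so no index is skipped forever.

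The only mildly delicate point is the bookkeeping that guarantees exhaustion. That exhaustion follows from always picking the least remaining element. The verification of (ii) at limits relies only on the finiteness of rational expressions, exactly as already used in Lemma \ref{lem:6.2}.
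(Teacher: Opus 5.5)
Your proposal is correct and follows the paper's proof essentially step for step: well-order $I \sqcup J$, apply Lemma \ref{lem:6.2} to the least missing coordinate at successor stages, take unions at limit stages (using that rational functions involve only finitely many variables), and conclude exhaustion at some $\Lambda \le |I \sqcup J|$. Your extra case where the chosen coordinate already lies in $K_\alpha$ is harmless but vacuous: the $x_i$ and the $y_j = \Phi^{-1}(Y^{f_j})$ are each algebraically independent over $k$, so no coordinate indexed outside $I_\alpha$ (resp.\ $J_\alpha$) can lie in $K_\alpha$.
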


\begin{proof}
Well-order $I \sqcup J$. Starting from $I_0 = J_0 = \varnothing$, if at stage $\alpha$ there remain coordinates in $I \sqcup J$ not yet included in $I_\alpha \sqcup J_\alpha$, pick the least such coordinate and apply Lemma \ref{lem:6.2} to obtain $(I_{\alpha+1}, J_{\alpha+1})$. At a limit ordinal $\lambda$, define
\[
I_\lambda = \bigcup_{\alpha < \lambda} I_\alpha, \qquad J_\lambda = \bigcup_{\alpha < \lambda} J_\alpha.
\]
Because the chains are increasing and every rational function involves only finitely many variables, we have
\[
k(x_i : i \in I_\lambda) = \bigcup_{\alpha < \lambda} k(x_i : i \in I_\alpha) = \bigcup_{\alpha < \lambda} k(y_j : j \in J_\alpha) = k(y_j : j \in J_\lambda).
\]
Since at least the least omitted coordinate is added at each successor step, the process exhausts $I \sqcup J$ at some ordinal $\Lambda \le |I \sqcup J|$.
\end{proof}

Let $P_\alpha \le P$ and $Q_\alpha \le Q$ be the subgroups generated by the basis elements indexed by $I_\alpha$ and $J_\alpha$, respectively, and define
\[
B_\alpha^P = \rho_P(P_\alpha), \qquad B_\alpha^Q = \rho_Q(Q_\alpha).
\]

\begin{lemma}\label{lem:6.4}
For every $\alpha \le \Lambda$, we have $B_\alpha^P = B_\alpha^Q =: B_\alpha$.
\end{lemma}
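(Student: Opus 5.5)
We need to show $\rho_P(P_\alpha) = \rho_Q(Q_\alpha)$ using $K_\alpha = k(x_i : i\in I_\alpha) = k(y_j : j\in J_\alpha)$. The plan is to identify both groups intrinsically with the group of character-weights of the eigenvectors of the $\sigma_\chi$ inside $K_\alpha^\times$.

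Since $\Phi$ is $B$-weighted, we have $\sigma_\chi^P(y_j) = \Phi^{-1}(\sigma_\chi^Q(Y^{f_j})) = \chi(\rho_Q(f_j))\,y_j$. Each $\sigma_\chi^P$ therefore preserves $K_\alpha$, and both $\{x_i\}$ and $\{y_j\}$ consist of $\sigma$-eigenvectors. Call $z\in k(P)^\times$ \emph{homogeneous of weight $b\in B$} if $\sigma_\chi^P(z) = \chi(b)z$ for all $\chi$. By Lemma \ref{lem:2.1} (with $B_0=0$) the weight of a homogeneous element is unique, and the homogeneous elements form a multiplicative group on which the weight is a homomorphism.

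The key claim is that the set of weights of homogeneous elements of $K_\alpha^\times$ equals $\rho_P(P_\alpha)$. The inclusion $\supseteq$ is clear, since $X^a$ for $a\in P_\alpha$ has weight $\rho_P(a)$. For $\subseteq$, I would use a lowest-exponent valuation on $k(P)$: fix a translation-invariant total order on $P$ and let $\nu_P$ be the valuation of Lemma \ref{lem:2.2} built for $P$. The argument of Lemma \ref{lem:2.3}, applied to $P$ in place of $Q$, shows that a homogeneous $z$ of weight $b$ satisfies $\rho_P(\nu_P(z)) = b$. For $z\in K_\alpha^\times = \Frac k[P_\alpha]$ we have $\nu_P(z)\in P_\alpha$, so $b\in\rho_P(P_\alpha)$.

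Symmetrically, the weights of homogeneous elements of $K_\alpha^\times$ equal $\rho_Q(Q_\alpha)$. To see this, transport via $\Phi$: $\Phi(K_\alpha) = k(Y^{f_j} : j\in J_\alpha) = \Frac k[Q_\alpha]$, and $\Phi$ intertwines $\sigma^P_\chi$ with $\sigma^Q_\chi$, so it preserves homogeneity and weights. Applying the same argument in $k(Q)$ with $\nu$ gives weights in $\rho_Q(Q_\alpha)$, and the $Y^q$ with $q\in Q_\alpha$ realize all of $\rho_Q(Q_\alpha)$. Comparing the two descriptions yields $B_\alpha^P = B_\alpha^Q$.

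The only delicate point is checking that $\nu_P(z)\in P_\alpha$ for $z\in\Frac k[P_\alpha]$. This holds because a numerator and denominator in $k[P_\alpha]$ have all exponents in $P_\alpha$, so their lowest exponents lie in $P_\alpha$ and their difference does too.
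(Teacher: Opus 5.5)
Your proof is correct, but it takes a different route from the paper's.

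\textbf{The paper's argument.} It is a short contradiction via character separation. Given $b \in B_\alpha^P \setminus B_\alpha^Q$, Lemma~\ref{lem:2.1} is applied with the nontrivial subgroup $B_0 = B_\alpha^Q$. This produces $\chi$ with $\chi|_{B_\alpha^Q} = 1$ and $\chi(b) \neq 1$. Then $\sigma_\chi^P$ fixes every $y_j$ with $j \in J_\alpha$, hence fixes all of $K_\alpha = k(y_j : j \in J_\alpha)$. Yet it moves $X^p \in K_\alpha$ for any $p \in P_\alpha$ with $\rho_P(p) = b$. In effect, the paper shows that the characters acting trivially on $K_\alpha$ are exactly those killing $B_\alpha^P$, and also exactly those killing $B_\alpha^Q$. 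It then recovers $B_\alpha$ from this annihilator by separation.

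\textbf{Your argument.} You characterize $B_\alpha^P$ and $B_\alpha^Q$ intrinsically as the group of weights of homogeneous (semi-invariant) elements of $K_\alpha^\times$. The upper bounds come from the lowest-exponent valuation: Lemma~\ref{lem:2.3} run on $P$, and again on $Q$ after transporting $K_\alpha$ to $\Frac k[Q_\alpha]$ via $\Phi$.

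\textbf{Comparison.} The paper's route is shorter and needs no ordering or valuation. It uses only that $K_\alpha$ is generated over $k$ by the $y_j$ and contains the monomials $X^p$, $p \in P_\alpha$. Yours uses Lemma~\ref{lem:2.1} only with $B_0 = 0$ (inside Lemma~\ref{lem:2.3}). The cost is the valuation machinery and an explicit use of the $k$-linearity of $\Phi$, both to identify $\Phi(K_\alpha)$ and to see that $\Phi$ preserves weights. In return, you exhibit $B_\alpha$ as an invariant of the $\Hom(B,k_*^\times)$-field $K_\alpha$ alone, independent of which monomial generating system is used.
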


\begin{proof}
Suppose $b \in B_\alpha^P \setminus B_\alpha^Q$. By Lemma \ref{lem:2.1}, there exists a character $\chi : B \to k_*^\times$ such that $\chi|_{B_\alpha^Q} = 1$ and $\chi(b) \neq 1$. Then $\sigma_\chi^Q$ fixes $Y^{f_j}$ for all $j \in J_\alpha$. For $y_j = \Phi^{-1}(Y^{f_j})$ ($j \in J_\alpha$), we have
\[
\sigma_\chi^P(y_j) = \sigma_\chi^P\bigl(\Phi^{-1}(Y^{f_j})\bigr) = \Phi^{-1}\bigl(\sigma_\chi^Q(Y^{f_j})\bigr) = \Phi^{-1}(Y^{f_j}) = y_j.
\]
Hence $\sigma_\chi^P$ fixes $y_j$ for all $j \in J_\alpha$, and therefore fixes the subfield $K_\alpha = k(y_j : j \in J_\alpha)$. However, choosing $p \in P_\alpha$ with $\rho_P(p) = b$, we have $X^p \in k(x_i : i \in I_\alpha) = K_\alpha$, so $\sigma_\chi^P(X^p) = X^p$, whereas $\sigma_\chi^P(X^p) = \chi(\rho_P(p))X^p = \chi(b)X^p \neq X^p$, a contradiction! Thus $B_\alpha^P \subseteq B_\alpha^Q$, and by symmetry $B_\alpha^Q \subseteq B_\alpha^P$, giving $B_\alpha^P = B_\alpha^Q$.
\end{proof}

Suppose a weight-preserving isomorphism $\beta_\alpha : P_\alpha \xrightarrow{\sim} Q_\alpha$ has been constructed. At the successor stage $\alpha + 1$, let $P_\alpha^{\mathrm{new}}$ and $Q_\alpha^{\mathrm{new}}$ be the subgroups spanned by the basis elements in $I_{\alpha+1} \setminus I_\alpha$ and $J_{\alpha+1} \setminus J_\alpha$, respectively; then
\[
P_{\alpha+1} = P_\alpha \oplus P_\alpha^{\mathrm{new}}, \qquad Q_{\alpha+1} = Q_\alpha \oplus Q_\alpha^{\mathrm{new}},
\]
where $P_\alpha^{\mathrm{new}}$ and $Q_\alpha^{\mathrm{new}}$ have at most countable rank.

\begin{lemma}\label{lem:6.5}
Every character
\[
\bar{\chi} : B_{\alpha+1}/B_\alpha \longrightarrow k_*^\times
\]
extends to a character $\chi : B \to k_*^\times$ satisfying $\chi|_{B_\alpha} = 1$. Consequently, the action of $\chi$ fixes $K_\alpha$ pointwise and induces on $P_\alpha^{\mathrm{new}}$ and $Q_\alpha^{\mathrm{new}}$ the weighted action associated with their quotient weight maps onto $B_{\alpha+1}/B_\alpha$.
\end{lemma}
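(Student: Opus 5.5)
The extension is a direct application of the injectivity of $k_*^\times$, as in Lemma \ref{lem:2.1}. Given $\bar{\chi} : B_{\alpha+1}/B_\alpha \to k_*^\times$, I compose it with the projection to get a character $\chi_1 : B_{\alpha+1} \to k_*^\times$ that is trivial on $B_\alpha$. Since $k_*$ is algebraically closed of characteristic zero, $k_*^\times$ is divisible, hence injective as a $\Z$-module. Therefore $\chi_1$ extends along the inclusion $B_{\alpha+1} \hookrightarrow B$ to a character $\chi : B \to k_*^\times$. By construction $\chi|_{B_\alpha} = \chi_1|_{B_\alpha} = 1$, and $\chi|_{B_{\alpha+1}}$ factors through $\bar{\chi}$.

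For the consequences, I first check that $\sigma_\chi^P$ fixes $K_\alpha$. For $i \in I_\alpha$ we have $\rho_P(e_i) \in B_\alpha^P = B_\alpha$ by Lemma \ref{lem:6.4}. Hence $\sigma_\chi^P(x_i) = \chi(\rho_P(e_i))x_i = x_i$. Since $\sigma_\chi^P$ is a field automorphism fixing $k$, it fixes $K_\alpha = k(x_i : i \in I_\alpha)$ pointwise. On the $Q$-side, the same argument with $B_\alpha^Q = B_\alpha$ shows that $\sigma_\chi^Q$ fixes $Y^{f_j}$ for $j \in J_\alpha$. Weightedness of $\Phi$ then gives $\sigma_\chi^P(y_j) = \Phi^{-1}(\sigma_\chi^Q(Y^{f_j})) = y_j$, which confirms from the other description that $K_\alpha = k(y_j : j \in J_\alpha)$ is fixed, consistently with Lemma \ref{lem:6.3}(ii).

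Next I identify the induced action on the new summands. Define the quotient weight maps
\[
\bar{\rho}_P : P_\alpha^{\mathrm{new}} \to B_{\alpha+1}/B_\alpha, \qquad a \mapsto \rho_P(a) + B_\alpha,
\]
and similarly $\bar{\rho}_Q$ on $Q_\alpha^{\mathrm{new}}$. These are well defined because $P_\alpha^{\mathrm{new}} \le P_{\alpha+1}$ and $\rho_P(P_{\alpha+1}) = B_{\alpha+1}$. They are surjective: $\rho_P(P_{\alpha+1}) = \rho_P(P_\alpha) + \rho_P(P_\alpha^{\mathrm{new}})$, so $\rho_P(P_\alpha^{\mathrm{new}})$ maps onto $B_{\alpha+1}/B_\alpha$. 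The same holds on the $Q$-side via Lemma \ref{lem:6.4} at level $\alpha+1$. For $a \in P_\alpha^{\mathrm{new}}$ we get $\sigma_\chi^P(X^a) = \chi(\rho_P(a))X^a = \bar{\chi}(\bar{\rho}_P(a))X^a$, and likewise for $Q$. Thus $\sigma_\chi^P$ restricted to $K_{\alpha+1} = K_\alpha(X^a : a \in P_\alpha^{\mathrm{new}})$ is $K_\alpha$-linear and acts on the new monomials exactly by the weighted action for $\bar{\rho}_P$. This is the sense in which it induces the weighted action associated with the quotient weight maps.

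There is no serious obstacle. The only points needing care are the well-definedness of the quotient weights, which needs $B_\alpha^P = B_\alpha^Q$ from Lemma \ref{lem:6.4}, and invoking injectivity of $k_*^\times$ relative to $B_{\alpha+1} \le B$ rather than a cyclic subgroup as in Lemma \ref{lem:2.1}.
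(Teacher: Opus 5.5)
Your proposal is correct and follows essentially the same route as the paper. Both rest on the injectivity of the divisible group $k_*^\times$; the only cosmetic difference is that the paper extends $\bar{\chi}$ from $B_{\alpha+1}/B_\alpha$ to $B/B_\alpha$ and then composes with $B \twoheadrightarrow B/B_\alpha$, whereas you first pull back to $B_{\alpha+1}$ and then extend along $B_{\alpha+1} \hookrightarrow B$. Your checks of well-definedness and surjectivity of the quotient weight maps via Lemma \ref{lem:6.4}, and of $K_\alpha$ being fixed through both generating sets, match and slightly expand what the paper states in the text right after the lemma.
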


\begin{proof}
Since $k_*^\times$ is divisible and hence injective in the category of abelian groups, $\bar{\chi}$ extends to $B/B_\alpha$, and composing with $B \twoheadrightarrow B/B_\alpha$ gives $\chi$. Because $\chi|_{B_\alpha} = 1$, $\sigma_\chi^P$ fixes all elements of $P_\alpha$ and $Q_\alpha$ and hence fixes $K_\alpha$.
\end{proof}

Since $B_{\alpha+1} = B_\alpha + \rho_P(P_\alpha^{\mathrm{new}}) = B_\alpha + \rho_Q(Q_\alpha^{\mathrm{new}})$, the composite quotient weight maps $\bar{\rho}_P : P_\alpha^{\mathrm{new}} \twoheadrightarrow B_{\alpha+1}/B_\alpha$ and $\bar{\rho}_Q : Q_\alpha^{\mathrm{new}} \twoheadrightarrow B_{\alpha+1}/B_\alpha$ (obtained by composing $\rho_P, \rho_Q$ with $\pi_\alpha : B_{\alpha+1} \twoheadrightarrow B_{\alpha+1}/B_\alpha$) are both surjective. By $K_\alpha(P_\alpha^{\mathrm{new}}) = K_\alpha(Q_\alpha^{\mathrm{new}}) = K_{\alpha+1}$ and Lemma \ref{lem:6.5}, the identity map on $K_{\alpha+1}$ (via $\Phi$) is a $(B_{\alpha+1}/B_\alpha)$-weighted $K_\alpha$-field isomorphism. By Theorem \ref{lem:5.4}, there exists a quotient-weight-preserving isomorphism
\[
\bar{\beta}_\alpha : P_\alpha^{\mathrm{new}} \xrightarrow{\sim} Q_\alpha^{\mathrm{new}}, \qquad \bar{\rho}_Q \circ \bar{\beta}_\alpha = \bar{\rho}_P.
\]
Consequently, for every $r \in P_\alpha^{\mathrm{new}}$, the difference $d_\alpha(r) := \rho_P(r) - \rho_Q(\bar{\beta}_\alpha(r))$ lies in $B_\alpha$, defining a homomorphism $d_\alpha : P_\alpha^{\mathrm{new}} \to B_\alpha$. Since $P_\alpha^{\mathrm{new}}$ is free abelian and $\rho_Q|_{Q_\alpha} : Q_\alpha \twoheadrightarrow B_\alpha$ is surjective, we can lift $d_\alpha$ basis element by basis element to a homomorphism
\[
h_\alpha : P_\alpha^{\mathrm{new}} \longrightarrow Q_\alpha, \qquad \rho_Q \circ h_\alpha = d_\alpha.
\]
Define $\beta_{\alpha+1} : P_{\alpha+1} \to Q_{\alpha+1}$ by
\[
\beta_{\alpha+1}(p + r) = \beta_\alpha(p) + h_\alpha(r) + \bar{\beta}_\alpha(r), \qquad p \in P_\alpha, \ r \in P_\alpha^{\mathrm{new}}.
\]

\begin{lemma}\label{lem:6.6}
$\beta_{\alpha+1}$ is a weight-preserving isomorphism extending $\beta_\alpha$.
\end{lemma}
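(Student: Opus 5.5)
The plan is to verify three properties of $\beta_{\alpha+1}$ directly from its defining formula: that it extends $\beta_\alpha$, that it preserves weights, and that it is bijective. Bijectivity comes from a block upper-triangular structure with respect to the decompositions $P_{\alpha+1} = P_\alpha \oplus P_\alpha^{\mathrm{new}}$ and $Q_{\alpha+1} = Q_\alpha \oplus Q_\alpha^{\mathrm{new}}$.

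\emph{Homomorphism and extension.} The map $\beta_{\alpha+1}$ is a homomorphism because it is a sum of the homomorphisms $\beta_\alpha\circ\mathrm{pr}_{P_\alpha}$, $h_\alpha\circ\mathrm{pr}_{\mathrm{new}}$ and $\bar\beta_\alpha\circ\mathrm{pr}_{\mathrm{new}}$. Setting $r=0$ gives $\beta_{\alpha+1}|_{P_\alpha}=\beta_\alpha$.

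\emph{Weights.} For $p\in P_\alpha$ and $r\in P_\alpha^{\mathrm{new}}$ we compute
\[
\rho_Q(\beta_{\alpha+1}(p+r)) = \rho_Q(\beta_\alpha(p)) + \rho_Q(h_\alpha(r)) + \rho_Q(\bar\beta_\alpha(r)) = \rho_P(p) + d_\alpha(r) + \rho_Q(\bar\beta_\alpha(r)) = \rho_P(p)+\rho_P(r).
\]
This uses the induction hypothesis $\rho_Q\beta_\alpha=\rho_P|_{P_\alpha}$, the lifting property $\rho_Q h_\alpha=d_\alpha$, and the definition of $d_\alpha$. The only thing to check is that $d_\alpha(r)$ really lies in $B_\alpha$, which follows from $\bar\rho_Q\bar\beta_\alpha=\bar\rho_P$. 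The lift $h_\alpha$ exists because $\rho_Q|_{Q_\alpha}$ maps onto $B_\alpha=B^Q_\alpha$ by Lemma~\ref{lem:6.4}.

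\emph{Bijectivity.} The key observation is that $h_\alpha(r)\in Q_\alpha$ while $\bar\beta_\alpha(r)\in Q_\alpha^{\mathrm{new}}$. Hence, in block form,
\[
\beta_{\alpha+1}=\begin{pmatrix}\beta_\alpha & h_\alpha\\ 0 & \bar\beta_\alpha\end{pmatrix},
\]
and both diagonal blocks are isomorphisms. For injectivity, suppose $\beta_{\alpha+1}(p+r)=0$. Projecting to $Q_\alpha^{\mathrm{new}}$ gives $\bar\beta_\alpha(r)=0$, so $r=0$, and then $\beta_\alpha(p)=0$ gives $p=0$. For surjectivity, take $q+s$ with $q\in Q_\alpha$ and $s\in Q_\alpha^{\mathrm{new}}$. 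Put $r=\bar\beta_\alpha^{-1}(s)$ and $p=\beta_\alpha^{-1}(q-h_\alpha(r))$; then $\beta_{\alpha+1}(p+r)=q+s$. Equivalently, one can write down the explicit inverse $q+s\mapsto \beta_\alpha^{-1}(q-h_\alpha\bar\beta_\alpha^{-1}s)+\bar\beta_\alpha^{-1}s$.

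\emph{Main obstacle.} The verification above is routine; the substantive point lies upstream, in the validity of the inputs. One needs $\bar\beta_\alpha$ to exist as a genuine isomorphism $P_\alpha^{\mathrm{new}}\to Q_\alpha^{\mathrm{new}}$. This requires applying Theorem~\ref{lem:5.4} over the base field $K_\alpha$ with weight group $B_{\alpha+1}/B_\alpha$. For that, I would confirm that $K_\alpha$ still contains $k_*$, and that $K_{\alpha+1}$ is the rational function field $K_\alpha(P_\alpha^{\mathrm{new}})$ and likewise for $Q$, which holds since the basis variables are algebraically independent. By Lemma~\ref{lem:6.5}, the characters of $B_{\alpha+1}/B_\alpha$ act compatibly on both presentations. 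Granting this as the paper does, the lemma follows. Together with taking unions at limit stages (which preserves weight-preservation and bijectivity, since each element lies in some $P_\alpha$), this completes the transfinite construction.
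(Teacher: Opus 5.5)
Your proposal is correct and follows the paper's proof essentially verbatim. Both use the block upper-triangular form with isomorphic diagonal blocks $\beta_\alpha$ and $\bar{\beta}_\alpha$ (with the same explicit inverse), the restriction to $P_\alpha$, and the weight computation via $\rho_Q \circ h_\alpha = d_\alpha$. Your additional checks (the homomorphism property, explicit injectivity, and the upstream hypotheses for applying Theorem~\ref{lem:5.4} over $K_\alpha$) are accurate but go beyond what the lemma itself requires.
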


\begin{proof}
With respect to the direct sum decompositions $P_\alpha \oplus P_\alpha^{\mathrm{new}}$ and $Q_\alpha \oplus Q_\alpha^{\mathrm{new}}$, $\beta_{\alpha+1}$ has the upper-triangular block matrix
\[
\begin{pmatrix}
\beta_\alpha & h_\alpha \\
0 & \bar{\beta}_\alpha
\end{pmatrix}.
\]
Since both diagonal blocks $\beta_\alpha$ and $\bar{\beta}_\alpha$ are isomorphisms, $\beta_{\alpha+1}$ is an isomorphism with explicit inverse
\[
q + s \longmapsto \beta_\alpha^{-1}\bigl(q - h_\alpha(\bar{\beta}_\alpha^{-1}(s))\bigr) + \bar{\beta}_\alpha^{-1}(s), \qquad q \in Q_\alpha, \ s \in Q_\alpha^{\mathrm{new}}.
\]
Clearly $\beta_{\alpha+1}|_{P_\alpha} = \beta_\alpha$. Moreover,
\[
\begin{aligned}
\rho_Q(\beta_{\alpha+1}(p + r)) &= \rho_Q(\beta_\alpha(p)) + \rho_Q(h_\alpha(r)) + \rho_Q(\bar{\beta}_\alpha(r)) \\
&= \rho_P(p) + d_\alpha(r) + \rho_Q(\bar{\beta}_\alpha(r)) \\
&= \rho_P(p) + \rho_P(r) = \rho_P(p + r),
\end{aligned}
\]
so $\beta_{\alpha+1}$ preserves weights.
\end{proof}

\begin{proof}[Proof of Theorem \ref{thm:6.1}]
Starting from the zero map $\beta_0 : 0 \to 0$, extend $\beta_\alpha$ to $\beta_{\alpha+1}$ at each successor stage via Lemma \ref{lem:6.6}, and at each limit ordinal $\lambda$ set
\[
\beta_\lambda = \bigcup_{\alpha < \lambda} \beta_\alpha.
\]
Because each stage extends all earlier stages, $\beta_\lambda$ is a well-defined homomorphism; it is injective because any nonzero element belongs to $P_\alpha$ for some $\alpha < \lambda$, surjectivity holds because any $q \in Q_\lambda$ involves only finitely many coordinates and thus belongs to $Q_\alpha$ for some $\alpha < \lambda$, and weight preservation is maintained at every stage. At stage $\Lambda$, we obtain a weight-preserving isomorphism $\beta = \beta_\Lambda : P \xrightarrow{\sim} Q$ satisfying $\rho_Q \beta = \rho_P$.
\end{proof}

\section{The Center of the Division Ring of Fractions of a Torsion-Free Nilpotent Group}

Starting in this section, we turn to the division ring reconstruction problem for torsion-free nilpotent groups. Using a bi-invariant total ordering on a torsion-free nilpotent group $G$ and the embedding into the Malcev--Neumann formal power series division ring, we prove that the center of the division ring of fractions $D_{\Q}(G)$ coincides with the fraction field $\Frac \Q[Z(G)]$ of the rational group algebra of the center $Z(G)$, and establish the isolation of $Z(G)$ in $G$.

Let $G$ be a torsion-free nilpotent group, written multiplicatively. By \cite{Malcev1951,Neumann1949}, $G$ admits a bi-invariant total ordering $<$, so the product of the lowest terms of two nonzero elements in $\Q[G]$ is the lowest term of their product, and thus $\Q[G]$ has no zero divisors. Since $G$ is nilpotent and hence amenable, by \cite{Tamari1954,BartholdiKielak2019}, $\Q[G]$ is an Ore domain and possesses a division ring of fractions $D_{\Q}(G)$. Embedding $\Q[G]$ into the Malcev--Neumann division ring $\Q((G, <))$, where every nonzero element is invertible, the universal property of Ore fractions yields a unique embedding
\[
D_{\Q}(G) \hookrightarrow \Q((G, <)).
\]
Hereafter, the support $\supp(z)$ of an element $z \in D_{\Q}(G)$ refers to its Malcev--Neumann support under this embedding.

\begin{lemma}\label{lem:7.1}
If $0 \neq z \in Z(D_{\Q}(G))$, then
\[
\supp(z) \subseteq Z(G).
\]
\end{lemma}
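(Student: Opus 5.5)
The plan is to test centrality of $z$ against the group elements themselves, inside the Malcev--Neumann ring. Fix $g \in G$. Since $z$ is central in $D_{\Q}(G)$ and $g \in \Q[G] \subseteq D_{\Q}(G)$, we have $gzg^{-1} = z$. The embedding $D_{\Q}(G) \hookrightarrow \Q((G,<))$ is a ring homomorphism, so the same identity holds in $\Q((G,<))$. Write $z = \sum_{x \in \supp(z)} c_x x$ with $\supp(z)$ well-ordered. Conjugation by $g$ is an order automorphism of $(G,<)$, because the ordering is bi-invariant. Hence
\[
gzg^{-1} = \sum_{x} c_x\, gxg^{-1}
\]
is a legitimate Malcev--Neumann series with support $g\,\supp(z)\,g^{-1}$ and coefficient $c_x$ at $gxg^{-1}$. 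Comparing coefficients in $gzg^{-1} = z$ shows that $\supp(z)$ is invariant under conjugation by every $g \in G$, and that $c_{gxg^{-1}} = c_x$.

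The key step is to upgrade this invariance to the statement that each $x \in \supp(z)$ is fixed by conjugation. Suppose $x \in \supp(z)$ and $g x g^{-1} \neq x$. Since conjugation preserves the order, either $gxg^{-1} > x$ or $gxg^{-1} < x$; by replacing $g$ with $g^{-1}$ we may assume $gxg^{-1} < x$.

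Iterating gives a strictly decreasing infinite sequence
\[
x > gxg^{-1} > g^2xg^{-2} > \cdots.
\]
To see this, apply the order automorphism $y \mapsto gyg^{-1}$ repeatedly to the inequality $gxg^{-1} < x$. Every term of this sequence lies in $\supp(z)$ by the invariance established above. This contradicts the well-ordering of $\supp(z)$. Therefore every $x \in \supp(z)$ commutes with every $g \in G$, which means $\supp(z) \subseteq Z(G)$.

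The only point that needs care, and so the main obstacle, is justifying coefficient comparison for conjugation in $\Q((G,<))$. Concretely, one must check that the product $g \cdot z \cdot g^{-1}$ in the Malcev--Neumann ring equals the termwise conjugated series. This holds because multiplying by a single monomial merely translates the support bijectively and order-preservingly, so no sums collapse. Once that is recorded, the argument is purely combinatorial and uses only the bi-invariance of $<$ and well-orderedness. Nilpotency of $G$ is not needed for this step.
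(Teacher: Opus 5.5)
Your proof is correct and follows the paper's argument essentially step for step. Both use $gzg^{-1}=z$ in $\Q((G,<))$ to get conjugation-invariance of $\supp(z)$, and both obtain an infinite strictly decreasing orbit under the order-automorphism $y\mapsto gyg^{-1}$, contradicting well-ordering; the differences are cosmetic (you reduce to the case $gxg^{-1}<x$ by replacing $g$ with $g^{-1}$ where the paper treats the two cases separately, and you spell out why conjugation by a monomial acts termwise on Malcev--Neumann series).
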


\begin{proof}
For any $h \in G$, since $z$ is central, we have $hzh^{-1} = z$. By uniqueness of the Malcev--Neumann series expansion $z = \sum_{g \in G} a_g g$ ($a_g \in \Q$), we have
\[
hzh^{-1} = \sum_{g \in G} a_g (hgh^{-1}),
\]
and since conjugation $g \mapsto hgh^{-1}$ is a bijection on $G$,
\[
\supp(hzh^{-1}) = \{hgh^{-1} : a_g \neq 0\} = h\,\supp(z)h^{-1} = \supp(z).
\]
If there were $g \in \supp(z)$ with $g \notin Z(G)$, there would exist $h \in G$ such that $hgh^{-1} \neq g$. Since $<$ is a bi-invariant total ordering on $G$, the inner automorphism $c_h : x \mapsto hxh^{-1}$ is order-preserving. If $c_h(g) > g$, repeatedly applying $c_h^{-1}$ produces an infinite strictly decreasing chain
\[
g > c_h^{-1}(g) > c_h^{-2}(g) > \cdots
\]
all lying in $\supp(z)$, which contradicts the well-ordering of the support of a Malcev--Neumann series. If $c_h(g) < g$, applying $c_h$ repeatedly gives the same contradiction.
\end{proof}

\begin{proposition}\label{prop:7.2}
For any torsion-free nilpotent group $G$,
\[
Z(D_{\Q}(G)) = \Frac \Q[Z(G)].
\]
\end{proposition}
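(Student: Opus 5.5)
We prove the two inclusions separately. The inclusion $\supseteq$ is formal. The inclusion $\subseteq$ combines Lemma \ref{lem:7.1} with a coset decomposition of $\Q[G]$ over $Z := Z(G)$, carried out inside $\Q((G,<))$.

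\emph{The inclusion $\supseteq$.} Since $Z$ is central in $G$, every element of $\Q[Z]$ commutes with $\Q[G]$. An element $c$ commuting with $\Q[G]$ also commutes with every inverse $b^{-1}$ ($b \in \Q[G]\setminus\{0\}$), because $cb=bc$ gives $b^{-1}c=cb^{-1}$. Hence $c$ commutes with every Ore fraction $ab^{-1}$, so $\Q[Z] \subseteq Z(D_{\Q}(G))$. The centre of a division ring is a field containing $\Q[Z]$, so it contains the subfield generated by $\Q[Z]$. That subfield is $\Frac \Q[Z]$ by the universal property of fraction fields of the commutative domain $\Q[Z]$. Thus $\Frac \Q[Z] \subseteq Z(D_{\Q}(G))$.

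\emph{The inclusion $\subseteq$.} Let $0 \neq z \in Z(D_{\Q}(G))$. By Lemma \ref{lem:7.1}, $\supp(z) \subseteq Z$, so $z \in \Q((Z,<))$, the sub-division-ring of $\Q((G,<))$ of series supported in $Z$ (with the restricted order). Write $z = ab^{-1}$ with $a,b \in \Q[G]$ and $b \neq 0$, so that $a = zb$ in $\Q((G,<))$.

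Fix a transversal $\{t\}$ for $Z$ in $G$. Decompose
\[
b = \sum_t c_t\, t, \qquad a = \sum_t c'_t\, t,
\]
with $c_t, c'_t \in \Q[Z]$ and only finitely many nonzero.

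\emph{Key step.} For any series $w$ supported in $Z$, the product $w\,c_t\,t$ is supported in the coset $Zt$. This holds because supports multiply as $\supp(w)\supp(c_t)t \subseteq Zt$, and the convolution defining the product has no cross-coset cancellation. Hence $zb = \sum_t (z c_t)\, t$ is a finite sum of series with pairwise disjoint supports $Zt$. Comparing the $Zt$-components of $a = zb$, using uniqueness of Malcev--Neumann expansions, gives
\[
c'_t = z c_t \qquad \text{for every } t.
\]
Choose $t$ with $c_t \neq 0$. Then $z = c'_t c_t^{-1}$, where the inverse is taken in $\Q((Z,<)) \subseteq \Q((G,<))$.

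It remains to identify $c'_t c_t^{-1}$ with an element of $\Frac \Q[Z] \subseteq D_{\Q}(G)$. The images of $\Frac\Q[Z]$ in $\Q((G,<))$ via $D_{\Q}(G)$ and via $\Q((Z,<))$ agree, since inverses in a division ring are unique. Therefore $z \in \Frac \Q[Z]$.

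The only delicate point is the coset-componentwise comparison: one must check that multiplying a $Z$-supported series by an element of $\Q[Z]t$ stays inside $Zt$ with no interaction between different cosets. This follows because $Z$ is a normal subgroup, indeed central, so $(Zt)(Zt') = Ztt'$ and each product $z c_t t$ involves only the single coset $Zt$. Everything else is formal.
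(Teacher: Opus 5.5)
Your proof is correct and follows the paper's argument essentially verbatim. Both use Lemma \ref{lem:7.1} to see that left multiplication by $z$ preserves each $Z(G)$-coset, compare the coset components of $a = zb$, and read off $z = c'_t c_t^{-1} \in \Frac\Q[Z(G)]$ for any $t$ with $c_t \neq 0$. You also spell out the easy inclusion and the compatibility of inverses under the embedding into $\Q((G,<))$, which the paper leaves implicit.
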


\begin{proof}
The right-hand side is clearly contained in the center. Conversely, take $0 \neq z \in Z(D_{\Q}(G))$, and write $z = ab^{-1}$ with $0 \neq a, b \in \Q[G]$, so $zb = a$ in $\Q((G, <))$. Set $C = G/Z(G)$ and decompose the finite sums $a$ and $b$ along $C$-cosets:
\[
a = \sum_{c \in C} a_c, \qquad b = \sum_{c \in C} b_c,
\]
where only finitely many components are nonzero. Fixing a coset representative $g_c \in G$ for each coset that appears, write
\[
a_c = \alpha_c g_c, \qquad b_c = \beta_c g_c, \qquad \alpha_c, \beta_c \in \Q[Z(G)].
\]
By Lemma \ref{lem:7.1}, $\supp(z) \subseteq Z(G)$, so left multiplication by $z$ preserves each $G/Z(G)$-coset. Thus $zb = a$ implies $zb_c = a_c$ for each coset $c \in C$. Choosing any $c$ with $b_c \neq 0$, we have $a_c = zb_c \neq 0$, and hence
\[
z = a_c b_c^{-1} = \alpha_c g_c (\beta_c g_c)^{-1} = \alpha_c \beta_c^{-1} \in \Frac \Q[Z(G)],
\]
since $\alpha_c$ and $\beta_c$ are central.
\end{proof}

\begin{lemma}\label{lem:7.3}
If a group $G$ is bi-orderable and $x^n = y^n$ for some $n \ge 1$, then $x = y$.
\end{lemma}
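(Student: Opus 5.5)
The plan is to argue by contradiction using trichotomy and the compatibility of a bi-invariant ordering with multiplication on both sides. Fix a bi-invariant total ordering $<$ on $G$, so that $a<b$ implies $ca<cb$ and $ac<bc$ for all $c\in G$. Suppose $x^n=y^n$ but $x\neq y$. By trichotomy, and swapping $x$ and $y$ if necessary, we may assume $x<y$.

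The key step is the multiplicative monotonicity of the ordering. If $a<b$ and $c<d$, then $ac<bc$ by right invariance and $bc<bd$ by left invariance, so $ac<bd$ by transitivity. Applying this inductively with $a=x^{k}$, $b=y^{k}$, $c=x$, $d=y$ gives
\[
x^{k}<y^{k}\ \Longrightarrow\ x^{k+1}<y^{k+1}.
\]
Starting from $x<y$, this yields $x^n<y^n$ for every $n\ge 1$. That contradicts $x^n=y^n$, since a strict total order is irreflexive. Hence $x=y$.

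There is no serious obstacle. The only point needing care is that the induction step uses both left and right invariance. One-sided invariance alone would not let us compare $x^{k}x$ with $y^{k}y$, because the factors differ on both sides, and the argument would fail. The case $n=1$ is trivial, and the proof needs nothing beyond bi-orderability. In particular, it does not use torsion-freeness separately, since bi-orderability already implies it.
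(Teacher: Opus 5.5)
Your proof is correct and is essentially the paper's argument. The paper writes the same two-sided-invariance comparison as an explicit chain $x^n = x^{n-1}x < x^{n-1}y < x^{n-2}y^2 < \dots < y^n$, replacing one factor at a time, whereas you package it as the induction step $x^k x < y^k x < y^k y$.
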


\begin{proof}
If $x < y$, then multiplying on the left and right preserves strict inequalities, yielding
\[
x^n = x^{n-1}x < x^{n-1}y < x^{n-2}y^2 < \dots < y^n,
\]
a contradiction. The case $y < x$ is analogous.
\end{proof}

\begin{lemma}\label{lem:7.4}
If $G$ is torsion-free nilpotent, then $Z(G)$ is isolated in $G$; that is, $g^n \in Z(G)$ ($n \ge 1$) implies $g \in Z(G)$. In particular, $G/Z(G)$ is torsion-free nilpotent.
\end{lemma}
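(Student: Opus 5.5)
The plan is to reduce isolation of $Z(G)$ to unique extraction of roots, using Lemma \ref{lem:7.3}, which is available because a torsion-free nilpotent group is bi-orderable by Malcev's theorem. Suppose $g^n \in Z(G)$ with $n \ge 1$, and let $h \in G$ be arbitrary. We want to show $hgh^{-1} = g$. Since $g^n$ is central,
\[
(hgh^{-1})^n = h g^n h^{-1} = g^n .
\]
Setting $x = hgh^{-1}$ and $y = g$, we have $x^n = y^n$ in the bi-orderable group $G$. Lemma \ref{lem:7.3} then gives $x = y$, that is, $hgh^{-1} = g$. As $h$ was arbitrary, $g \in Z(G)$.

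For the second assertion, recall that $G/Z(G)$ is nilpotent because quotients of nilpotent groups are nilpotent; its class drops by one. For torsion-freeness, suppose $(gZ(G))^n = Z(G)$ with $n \ge 1$. Then $g^n \in Z(G)$, so the first part gives $g \in Z(G)$, and the coset is trivial.

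There is no real obstacle here. The only point to check is that Lemma \ref{lem:7.3} applies, which needs a bi-invariant ordering on $G$. The paper recalls this ordering at the start of Section 7 via \cite{Malcev1951,Neumann1949}.
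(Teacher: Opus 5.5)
Your proposal is correct and is essentially the paper's own proof. You use Malcev's bi-ordering, the unique roots of Lemma~\ref{lem:7.3} applied to $(hgh^{-1})^n = g^n$, and then torsion-freeness and nilpotency of the quotient, exactly as the paper does.
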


\begin{proof}
By \cite{Malcev1951}, $G$ is bi-orderable and thus has unique roots by Lemma \ref{lem:7.3}. If $g^n \in Z(G)$, then for every $h \in G$,
\[
(hgh^{-1})^n = hg^n h^{-1} = g^n,
\]
so uniqueness of roots implies $hgh^{-1} = g$, meaning $g \in Z(G)$. Thus $G/Z(G)$ is torsion-free, and it is nilpotent as a quotient of a nilpotent group.
\end{proof}

\section{Central Localization and the Leading-Term Group}

In this section, we use the strong grading of the central localization $A_G = (\Q[Z(G)] \setminus \{0\})^{-1}\Q[G]$ over the quotient group $G/Z(G)$ to define a lowest-coset leading-term homomorphism $\ell : D^\times \to F^\times G$ that preserves coefficients in the central field $F$. Using $\ell$, we prove $H/Z(H) \cong G/Z(G)$ and replace $\varphi(H)$ by an isomorphic subgroup $H_1 \le F^\times G$ satisfying $F^\times H_1 = F^\times G$ and $D_{\Q}(H_1) = D$.

Suppose there is a division ring isomorphism
\[
\varphi : D_{\Q}(H) \xrightarrow{\sim} D_{\Q}(G).
\]
By Proposition \ref{prop:7.2}, $\varphi$ maps the center of $D_{\Q}(H)$ isomorphically onto the center of $D := D_{\Q}(G)$, so we set
\[
F = Z(D) = \Frac \Q[Z(G)] = \varphi\bigl(\Frac \Q[Z(H)]\bigr),
\]
and view $\varphi(H)$ as a subgroup of $D^\times$.

\begin{lemma}\label{lem:8.1}
Let $R$ be a right Ore domain with classical right division ring of fractions $Q_{\mathrm{cl}}(R)$, and let $\Gamma$ be any group. If a map
\[
\lambda_0 : R \setminus \{0\} \longrightarrow \Gamma
\]
satisfies $\lambda_0(ab) = \lambda_0(a)\lambda_0(b)$, then there exists a unique group homomorphism
\[
\lambda : Q_{\mathrm{cl}}(R)^\times \longrightarrow \Gamma
\]
satisfying $\lambda(as^{-1}) = \lambda_0(a)\lambda_0(s)^{-1}$.
\end{lemma}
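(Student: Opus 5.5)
We define $\lambda$ directly on fractions by $\lambda(as^{-1}) := \lambda_0(a)\lambda_0(s)^{-1}$ and then verify that it is well defined and multiplicative. Uniqueness is immediate: every element of $Q_{\mathrm{cl}}(R)^\times$ has the form $as^{-1}$ with $a,s \in R\setminus\{0\}$. Any homomorphism extending $\lambda_0$ must send $s^{-1}$ to $\lambda_0(s)^{-1}$, because $\lambda(s)\lambda(s^{-1}) = \lambda(1)$. Moreover $\lambda_0(1) = 1$, since $\lambda_0(1) = \lambda_0(1)^2$ in a group.

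\emph{Well-definedness.} Suppose $as^{-1} = bt^{-1}$ with all of $a,b,s,t$ nonzero. By the right Ore condition there exist $u, v \in R\setminus\{0\}$ with $su = tv$. Then $au = as^{-1}su = bt^{-1}tv = bv$, using $su = tv$ in the middle step. Applying $\lambda_0$ gives
\[
\lambda_0(a)\lambda_0(u) = \lambda_0(b)\lambda_0(v), \qquad \lambda_0(s)\lambda_0(u) = \lambda_0(t)\lambda_0(v).
\]
Solving the second relation gives $\lambda_0(u) = \lambda_0(s)^{-1}\lambda_0(t)\lambda_0(v)$. Substituting into the first yields
\[
\lambda_0(a)\lambda_0(s)^{-1}\lambda_0(t)\lambda_0(v) = \lambda_0(b)\lambda_0(v).
\]
Cancelling $\lambda_0(v)$ on the right gives $\lambda_0(a)\lambda_0(s)^{-1} = \lambda_0(b)\lambda_0(t)^{-1}$. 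This computation uses only associativity in $\Gamma$, never commutativity, which matters because $\Gamma$ may be nonabelian.

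\emph{Multiplicativity.} Take $x = as^{-1}$ and $y = bt^{-1}$. By the Ore condition choose $c, w \neq 0$ with $sc = bw$; this gives $s^{-1}b = cw^{-1}$. Then
\[
xy = a(s^{-1}b)t^{-1} = acw^{-1}t^{-1} = (ac)(tw)^{-1},
\]
so $\lambda(xy) = \lambda_0(a)\lambda_0(c)\lambda_0(w)^{-1}\lambda_0(t)^{-1}$. From $sc = bw$ we get $\lambda_0(c)\lambda_0(w)^{-1} = \lambda_0(s)^{-1}\lambda_0(b)$, again using only associativity. Substituting,
\[
\lambda(xy) = \lambda_0(a)\lambda_0(s)^{-1}\lambda_0(b)\lambda_0(t)^{-1} = \lambda(x)\lambda(y).
\]

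The only delicate step is keeping the order of factors correct in the nonabelian group $\Gamma$ while manipulating the Ore relations; we do not expect any genuine obstacle.
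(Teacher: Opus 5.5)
Your proof is correct and follows essentially the same route as the paper. Well-definedness goes through a common right multiple $su = tv$, and you derive $au = bv$ explicitly from the Ore condition where the paper simply asserts that such $u,v$ exist; multiplicativity uses the Ore relation $sc = bw$ to rewrite $s^{-1}b = cw^{-1}$, exactly as the paper does with $sb' = bs'$.
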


\begin{proof}
If $as^{-1} = bt^{-1}$, there exist nonzero $u, v \in R$ such that $au = bv$ and $su = tv$, so
\[
\lambda_0(a)\lambda_0(s)^{-1} = \lambda_0(au)\lambda_0(su)^{-1} = \lambda_0(bv)\lambda_0(tv)^{-1} = \lambda_0(b)\lambda_0(t)^{-1},
\]
showing that $\lambda$ is well-defined. For multiplicativity, given $x = as^{-1}$ and $y = bt^{-1}$, choose nonzero $b', s' \in R$ with $sb' = bs'$, so $s^{-1}b = b'(s')^{-1}$ and $xy = ab'(ts')^{-1}$. From $sb' = bs'$ we have $\lambda_0(b')\lambda_0(s')^{-1} = \lambda_0(s)^{-1}\lambda_0(b)$, whence
\[
\lambda(xy) = \lambda_0(a)\lambda_0(s)^{-1}\lambda_0(b)\lambda_0(t)^{-1} = \lambda(x)\lambda(y).
\]
Uniqueness is immediate since every nonzero fraction has the form $as^{-1}$.
\end{proof}

By Lemma \ref{lem:7.4}, $C := G/Z(G)$ is torsion-free nilpotent and thus admits a bi-invariant total ordering $<$. Localizing $\Q[G]$ at the nonzero elements of the center gives
\[
A_G := (\Q[Z(G)] \setminus \{0\})^{-1}\Q[G].
\]
For each $c \in C$, choose a coset representative $g_c \in G$ with $g_{1_C} = 1_G$, and write $Z = Z(G)$. Since $G = \bigsqcup_{c \in C} Zg_c$, as a $\Q[Z]$-module we have
\[
\Q[G] = \bigoplus_{c \in C} \Q[Z] g_c.
\]
Localizing with respect to the multiplicatively closed set $\Q[Z] \setminus \{0\}$ yields
\[
A_G = (\Q[Z] \setminus \{0\})^{-1}\left(\bigoplus_{c \in C} \Q[Z] g_c\right) = \bigoplus_{c \in C} F g_c = \bigoplus_{c \in C} A_c,
\]
where $A_c := F g_c$. Thus every $a \in A_G$ has a unique finite expansion
\[
a = \sum_{c \in E} f_c g_c, \qquad E \subseteq C \text{ finite}, \ f_c \in F.
\]
For any $c, d \in C$, $g_c g_d$ and $g_{cd}$ lie in the same $Z$-coset, so $g_c g_d = z_{c,d} g_{cd}$ for some $z_{c,d} \in Z \subseteq F^\times$. Because $Z$ commutes with $F$ and $Fz_{c,d} = F$, we have
\[
A_c A_d = (F g_c)(F g_d) = F(g_c g_d) = F z_{c,d} g_{cd} = F g_{cd} = A_{cd},
\]
with $A_{1_C} = F$. Hence $A_G$ is a strongly $C$-graded ring.

For $0 \neq a = \sum_{c \in E} f_c g_c \in A_G$ ($f_c \in F^\times$) with $c_0 = \min E$, define
\[
\ell_0(a) := f_{c_0} g_{c_0} \in F^\times G.
\]

\begin{lemma}\label{lem:8.2}
$\ell_0 : A_G \setminus \{0\} \to F^\times G$ is a multiplicative map.
\end{lemma}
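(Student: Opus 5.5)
We must show that for nonzero $a,b\in A_G$ we have $\ell_0(ab)=\ell_0(a)\ell_0(b)$. This is the usual lowest-term argument for a bi-ordered grading group, transported to the strongly $C$-graded ring $A_G=\bigoplus_{c\in C}Fg_c$.

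Write $a=\sum_{c\in E}f_cg_c$ and $b=\sum_{d\in E'}h_dg_d$ with all $f_c,h_d\in F^\times$. Let $c_0=\min E$ and $d_0=\min E'$ in the bi-invariant order on $C$, which exists by Lemma \ref{lem:7.4} and Malcev's theorem.

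First, compute a single product of homogeneous pieces. Since $F=Z(D)$ is central in $D$, we get
\[
(f_cg_c)(h_dg_d)=f_ch_d\,g_cg_d=f_ch_dz_{c,d}\,g_{cd},
\]
and $f_ch_dz_{c,d}\in F^\times$ because $F$ is a field and $z_{c,d}\in Z\subseteq F^\times$. So every such product is a nonzero homogeneous element of degree $cd$. In particular, the $(c_0,d_0)$ product is nonzero in degree $c_0d_0$.

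Second, show that degree $c_0d_0$ receives no other contribution. If $(c,d)\neq(c_0,d_0)$ with $c\in E$, $d\in E'$, then $c\ge c_0$ and $d\ge d_0$ with at least one inequality strict. Bi-invariance gives $cd\ge c_0d\ge c_0d_0$, and one of these steps is strict, so $cd>c_0d_0$. Hence, by uniqueness of the expansion in $\bigoplus_c Fg_c$, the component of $ab$ of degree $c_0d_0$ is exactly $f_{c_0}h_{d_0}z_{c_0,d_0}g_{c_0d_0}\neq 0$. Every other component of $ab$ has strictly larger degree. Therefore $ab\neq0$, the minimal degree of $ab$ is $c_0d_0$, and
\[
\ell_0(ab)=f_{c_0}h_{d_0}z_{c_0,d_0}g_{c_0d_0}=(f_{c_0}g_{c_0})(h_{d_0}g_{d_0})=\ell_0(a)\ell_0(b).
\]

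Finally, check that $\ell_0$ actually lands in $F^\times G$, which is a group because $F^\times$ is central in $D^\times$. Each value $f_cg_c$ has $f_c\in F^\times$ and $g_c\in G$, so it lies in $F^\times G$, and products of such values stay in $F^\times G$.

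The only delicate point is that the coefficients $f_c$ must commute past the $g_d$. This holds because $F=\Frac\Q[Z(G)]$ is central in $D$ by Proposition \ref{prop:7.2}. It is also what makes the product of two homogeneous components homogeneous with a coefficient in $F^\times$. Nothing else in the argument needs care.
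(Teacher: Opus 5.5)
Your proof is correct and follows the same lowest-term argument as the paper, which simply asserts that the bi-orderability of $C$ makes the minimal graded component of $ab$ the product of the lowest terms of $a$ and $b$. You fill in the details the paper leaves implicit: the centrality of $F$, the cocycle factor $z_{c,d}\in Z\subseteq F^\times$, and the strict inequality $cd>c_0d_0$ for $(c,d)\neq(c_0,d_0)$.
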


\begin{proof}
If $a$ and $b$ have minimum cosets $c_0$ and $d_0$, respectively, the bi-orderability of $C$ ensures that the minimum graded component of $ab$ arises uniquely as the product of their lowest terms, so $\ell_0(ab) = \ell_0(a)\ell_0(b)$.
\end{proof}

If $R$ is a right Ore domain and $S \subseteq Z(R) \setminus \{0\}$ is multiplicatively closed, then $S^{-1}R$ remains a right Ore domain (for $\frac{a}{s}, \frac{b}{t} \in S^{-1}R$, choosing $ar = bu$ in $R$ gives $\frac{a}{s}\cdot sr = \frac{b}{t}\cdot tu$) and $Q_{\mathrm{cl}}(S^{-1}R) = Q_{\mathrm{cl}}(R)$. Thus $A_G$ is a right Ore domain with $Q_{\mathrm{cl}}(A_G) = D$. By Lemma \ref{lem:8.1}, $\ell_0$ extends to a group homomorphism
\[
\ell : D^\times \longrightarrow F^\times G, \qquad \ell|_{F^\times G} = \id.
\]
Let $\nu : D^\times \xrightarrow{\ell} F^\times G \twoheadrightarrow G/Z(G) = C$ be the composite homomorphism mapping $f_{c_0} g_{c_0} \in F^\times G$ to the $Z$-coset of $g_{c_0}$.

\begin{lemma}\label{lem:8.3}
If $N \triangleleft K$ with $N \neq 1$ and $K$ is a nilpotent group, then $N \cap Z(K) \neq 1$.
\end{lemma}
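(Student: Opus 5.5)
The plan is to use the lower central series of $K$ together with the fact that commutators with a normal subgroup stay inside that subgroup. Write the lower central series as
\[
K = \gamma_1(K) \ge \gamma_2(K) \ge \cdots \ge \gamma_{c+1}(K) = 1,
\]
where $\gamma_{i+1}(K) = [\gamma_i(K), K]$ and $c$ is the nilpotency class. Define the descending chain
\[
N_1 = N, \qquad N_{i+1} = [N_i, K].
\]
Each $N_i$ is normal in $K$, because $[X,K]$ is normal whenever $X$ is normal. Since $N$ is normal, we have $[N,K] \le N$, and by induction every $N_i$ lies in $N$.

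The first key step is to show by induction that $N_i \le \gamma_i(K)$. This holds for $i = 1$. If $N_i \le \gamma_i(K)$, then
\[
N_{i+1} = [N_i, K] \le [\gamma_i(K), K] = \gamma_{i+1}(K).
\]
It follows that $N_{c+1} = 1$.

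Next, since $N_1 = N \ne 1$ and $N_{c+1} = 1$, there is a largest index $j$ with $N_j \ne 1$, and then $N_{j+1} = [N_j, K] = 1$. The vanishing of $[N_j, K]$ means every element of $N_j$ commutes with every element of $K$, so $N_j \le Z(K)$. Combined with $N_j \le N$, this gives $1 \ne N_j \le N \cap Z(K)$, which is the claim.

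There is no real obstacle here. The only point needing care is the inclusion $[N_i, K] \le N_i$, which uses normality of $N_i$, and the monotonicity $[X,K] \le [Y,K]$ for $X \le Y$. Both are immediate from the definition of the commutator subgroup. An alternative I could use instead is the upper central series: take the least $i$ with $N \cap Z_i(K) \ne 1$ and commute a nontrivial element of that intersection with elements of $K$ to push it into $Z_{i-1}(K)$. The lower-central argument above is cleaner, so I would commit to it.
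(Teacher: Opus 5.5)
Your proof is correct, but it goes through the lower central series, whereas the paper uses the upper central series. The paper takes the least $i\ge 1$ with $N\cap Z_i(K)\neq 1$ and observes that if $i>1$, commuting a nontrivial $x\in N\cap Z_i(K)$ with a suitable $g\in K$ gives $1\neq[x,g]\in N\cap Z_{i-1}(K)$, contradicting minimality.

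Your version is more constructive. It exhibits an explicit nontrivial central subgroup of $N$, namely the last nontrivial term of $N\ge[N,K]\ge[N,K,K]\ge\cdots$, with no minimality or contradiction step.

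The paper's route buys robustness. It uses only the upper central series and the well-ordering of its index set, so it carries over verbatim to the transfinite upper central series. That is exactly how the paper proves the hypercentral analogue in Lemma~\ref{lem:12.2} and extends the main theorem in Section~12. Your argument depends essentially on $\gamma_{c+1}(K)=1$ for a finite $c$, and it does not generalize this way. In a hypercentral group the chain $[N,K,\dots,K]$ can stabilize at a nontrivial subgroup, so there need be no last nontrivial term. For instance, the locally dihedral group $K=C_{2^\infty}\rtimes C_2$ is hypercentral of length $\omega+1$, and taking $N=C_{2^\infty}$ gives $[N,K]=N$.

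For the lemma as stated, with $K$ nilpotent, both proofs are complete.
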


\begin{proof}
Let $1 = Z_0(K) < Z_1(K) < \dots < Z_c(K) = K$ be the upper central series of $K$, and let $i \ge 1$ be minimal such that $N \cap Z_i(K) \neq 1$. If $i = 1$, we are done. If $i > 1$, take $1 \neq x \in N \cap Z_i(K)$; if $x \notin Z(K)$, there exists $g \in K$ with $1 \neq [x, g] \in N \cap Z_{i-1}(K)$, contradicting the minimality of $i$.
\end{proof}

Set $C_H := F^\times \varphi(H) \le D^\times$.

\begin{lemma}\label{lem:8.4}
$C_H$ is a nilpotent group, and $Z(C_H) = F^\times$.
\end{lemma}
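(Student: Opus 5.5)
The plan is to realize $C_H$ as a homomorphic image of a nilpotent group, and then to compute its center by passing to the division ring that $C_H$ generates.

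\emph{Nilpotency.} Since $F = Z(D)$, every element of $F^\times$ commutes with every element of $\varphi(H) \subseteq D^\times$. Hence the map
\[
\mu : F^\times \times H \longrightarrow D^\times, \qquad (f, h) \longmapsto f\,\varphi(h),
\]
is a group homomorphism: $f\varphi(h)\,f'\varphi(h') = ff'\varphi(hh')$ because $f'$ is central. Its image is exactly $C_H$. The group $F^\times \times H$ is a direct product of an abelian group and a nilpotent group, so it is nilpotent of class at most the class of $H$. Homomorphic images of nilpotent groups are nilpotent, so $C_H$ is nilpotent.

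\emph{The center.} The inclusion $F^\times \subseteq Z(C_H)$ holds because $F^\times$ is central in all of $D$. For the reverse inclusion, take $x \in Z(C_H)$. Then $x$ commutes with every element of $F$ and of $\varphi(H)$, so $x$ lies in the centralizer
\[
C_D(x) = \{ y \in D : xy = yx \}.
\]
This centralizer is a division subring of $D$: it is closed under sums and products, and if $y \neq 0$ commutes with $x$ then so does $y^{-1}$. Hence $C_D(x)$ contains the division subring generated by $\varphi(H)$.

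\emph{Generation step.} I would next check that $\varphi(H)$ generates $D$ as a division ring. The group $H$ spans $\Q[H]$ over the prime field $\Q$. The Ore quotient $D_{\Q}(H)$ consists of the elements $as^{-1}$ with $a, s \in \Q[H]$, so it is the division ring generated by $H$. Since $\varphi$ is a ring isomorphism and automatically fixes $\Q$, $\varphi(H)$ generates $\varphi(D_{\Q}(H)) = D$. Consequently $C_D(x) = D$, so $x \in Z(D) \cap D^\times = F^\times$.

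This gives $Z(C_H) = F^\times$. I do not expect a serious obstacle; the only point needing care is the generation step just described, which ensures that commuting with $\varphi(H)$ forces commuting with all of $D$.
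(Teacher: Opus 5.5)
Your proof is correct. It differs from the paper's mainly in how nilpotency is obtained.

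\textbf{Nilpotency.} The paper computes the center first. It then notes that $C_H/F^\times \cong \varphi(H)/(\varphi(H)\cap F^\times)$ is nilpotent as a quotient of $H$, and shows that a central extension of a nilpotent group is nilpotent via $\gamma_{c+1}(C_H)\le F^\times \Rightarrow \gamma_{c+2}(C_H)=1$. If $H$ has class $c$, this gives class at most $c+1$. You instead realize $C_H$ directly as the image of $F^\times \times H$ under $(f,h)\mapsto f\varphi(h)$. This needs no lower-central-series computation, does not depend on the center calculation, and gives the sharper bound $\max(1,c)$. Your claim ``at most the class of $H$'' fails if $H=1$, since $F^\times\supseteq\Q^\times$ is nontrivial, but this does not affect the argument.

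\textbf{Center.} The paper writes $x=f\varphi(h)$, uses centrality of $f$ and injectivity of $\varphi$ to get $h\in Z(H)$, and then tacitly uses $Z(H)\subseteq Z(D_{\Q}(H))$ to conclude $\varphi(h)\in F^\times$. Your centralizer argument rests on the same fact: commuting with $\varphi(H)$ forces commuting with the division ring it generates. You apply it directly to $x$ without decomposing it, and you make the generation step explicit where the paper leaves it implicit.

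\textbf{Hypercentral case.} Your nilpotency route transfers verbatim to Section 12. $F^\times\times H$ is hypercentral whenever $H$ is, and quotients of hypercentral groups are hypercentral. So the central-extension half of Lemma~\ref{lem:12.3} would no longer be needed.
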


\begin{proof}
Clearly $F^\times \le Z(C_H)$. Conversely, if $f\varphi(h) \in Z(C_H)$ ($f \in F^\times, h \in H$), then since $f$ is central, $\varphi(h)$ commutes with all of $\varphi(H)$, so $h \in Z(H) \Rightarrow \varphi(h) \in F^\times \Rightarrow f\varphi(h) \in F^\times$.

The natural surjection $\varphi(H) \twoheadrightarrow C_H / F^\times$, $h \mapsto \varphi(h)F^\times$, has kernel $\varphi(H) \cap F^\times$, so $C_H / F^\times \cong \varphi(H) / (\varphi(H) \cap F^\times)$ is a quotient of a nilpotent group and is therefore nilpotent. Writing $\gamma_1(C_H) = C_H$ and $\gamma_{n+1}(C_H) = [\gamma_n(C_H), C_H]$, if $\gamma_{c+1}(C_H / F^\times) = \gamma_{c+1}(C_H)F^\times / F^\times = 1$, then $\gamma_{c+1}(C_H) \le F^\times = Z(C_H)$ and hence $\gamma_{c+2}(C_H) = [\gamma_{c+1}(C_H), C_H] = 1$, so $C_H$ is nilpotent.
\end{proof}

\begin{lemma}\label{lem:8.5}
$\ell|_{C_H} : C_H \to F^\times G$ is injective.
\end{lemma}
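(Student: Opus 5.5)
The strategy is to combine Lemma \ref{lem:8.3} with the facts already established about $C_H$. By Lemma \ref{lem:8.4}, $C_H$ is nilpotent and $Z(C_H)=F^\times$. The map $\ell|_{C_H}$ is a group homomorphism, so its kernel $N:=\ker(\ell|_{C_H})$ is a normal subgroup of $C_H$. If $N\neq 1$, Lemma \ref{lem:8.3} gives a nontrivial element of $N\cap Z(C_H)=N\cap F^\times$. But $\ell|_{F^\times G}=\id$, and in particular $\ell$ restricted to $F^\times$ is the identity. Hence any $f\in N\cap F^\times$ satisfies $f=\ell(f)=1$, which is a contradiction. Therefore $N=1$ and $\ell|_{C_H}$ is injective.

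The steps, in order, are as follows.

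First, I will record that $\ell:D^\times\to F^\times G$ is a homomorphism, which holds by Lemma \ref{lem:8.1} applied to $\ell_0$ via Lemma \ref{lem:8.2}. I will also check that $\ell(f)=f$ for $f\in F^\times$. This follows because an element $f\in F=A_{1_C}$ has a single graded component, namely the one at $c_0=1_C$, and $g_{1_C}=1_G$. One point needs care here: $F^\times$ must be a subset of $A_G\setminus\{0\}$. This holds because $F=\Frac\Q[Z(G)]$ sits inside $A_G$ as the degree-$1_C$ component. So no passage to fractions is needed, and $\ell(f)=\ell_0(f)=f$.

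Second, I will note that $C_H=F^\times\varphi(H)$ is a subgroup of $D^\times$. This holds because $F^\times$ is central and $\varphi(H)$ is a group. Consequently the kernel $N$ of the restriction is normal in $C_H$.

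Third, I will invoke Lemma \ref{lem:8.3}, which applies because $C_H$ is nilpotent by Lemma \ref{lem:8.4}. Using $Z(C_H)=F^\times$, also from Lemma \ref{lem:8.4}, this conclude as above.

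The only point I expect to require real attention is the claim that $\ell$ fixes $F^\times$ pointwise. Everything else is formal. This relies on the identification $A_{1_C}=F$ in the strong grading, together with the normalization $g_{1_C}=1_G$. Once that is checked, the argument is the standard fact that a homomorphism from a nilpotent group which is injective on the center is injective.
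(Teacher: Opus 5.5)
Your proposal is correct and follows the paper's proof essentially verbatim: the kernel of $\ell|_{C_H}$ is normal in the nilpotent group $C_H$, so by Lemma~\ref{lem:8.3} and Lemma~\ref{lem:8.4} a nontrivial kernel would meet $Z(C_H)=F^\times$ nontrivially, which is impossible because $\ell$ restricts to the identity on $F^\times$. Your extra check, that $F^\times$ sits in $A_G$ as the degree-$1_C$ component with $g_{1_C}=1_G$ so that $\ell(f)=\ell_0(f)=f$, correctly justifies a point the paper takes for granted.
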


\begin{proof}
The kernel $\ker(\ell|_{C_H})$ is a normal subgroup of the nilpotent group $C_H$. If it were nontrivial, Lemmas \ref{lem:8.3} and \ref{lem:8.4} would imply $\ker(\ell|_{C_H}) \cap F^\times \neq 1$; however, $\ell|_{F^\times} = \id_{F^\times}$, so $\ker(\ell|_{C_H}) \cap F^\times = 1$, a contradiction.
\end{proof}

\begin{lemma}\label{lem:8.6}
For $h \in \varphi(H)$,
\[
\nu(h) = 1 \quad \Longleftrightarrow \quad h \in \varphi(Z(H)).
\]
Consequently, $\nu$ induces an injective group homomorphism
\[
\bar{\nu} : H/Z(H) \hookrightarrow G/Z(G).
\]
\end{lemma}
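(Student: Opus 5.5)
The plan is to work inside the nilpotent group $C_H = F^\times\varphi(H)$ of Lemma \ref{lem:8.4}. There we combine two facts: the homomorphism $\ell$ is injective on $C_H$ (Lemma \ref{lem:8.5}), and $F^\times$ is central in $F^\times G$. The goal is to show that every element of $C_H$ with trivial $\nu$-value is central in $C_H$.

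First, the easy direction. If $h \in \varphi(Z(H))$, then $h$ lies in the center of $D_{\Q}(G)$. By Proposition \ref{prop:7.2} applied to $H$ and transported by $\varphi$, we get $h \in F^\times$. Since $\ell|_{F^\times G} = \id$, we have $\ell(h) = h = h\cdot g_{1_C}$ with $g_{1_C} = 1_G$. Hence its $Z(G)$-coset is trivial and $\nu(h) = 1$.

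For the converse, set $K := \ker(\nu|_{C_H})$, a normal subgroup of $C_H$. Take $x \in K$. Then $\ell(x) \in F^\times G$ has lowest coset $1_C$, so $\ell(x) = f g_{1_C} = f \in F^\times$. For arbitrary $y \in C_H$, the commutator $[x,y]$ lies in $C_H$, and since $\ell$ is a homomorphism,
\[
\ell([x,y]) = [\ell(x),\ell(y)] = [f,\ell(y)] = 1,
\]
because $F = Z(D)$ is central in $F^\times G \subseteq D^\times$. By Lemma \ref{lem:8.5}, $\ell|_{C_H}$ is injective, so $[x,y] = 1$. Therefore $x \in Z(C_H) = F^\times$ by Lemma \ref{lem:8.4}.

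Apply this to $h \in \varphi(H)$ with $\nu(h)=1$. We get $h \in \varphi(H) \cap F^\times$, so $h$ commutes with all of $\varphi(H)$. Since $\varphi$ is an isomorphism, $\varphi^{-1}(h) \in Z(H)$, that is, $h \in \varphi(Z(H))$.

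Finally, $\nu\circ\varphi : H \to G/Z(G)$ is a group homomorphism, as a composite of homomorphisms. By the equivalence just proved, its kernel is exactly $Z(H)$. It therefore induces the injective homomorphism $\bar\nu : H/Z(H) \hookrightarrow G/Z(G)$.

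The only genuinely delicate point is the step $\ell([x,y]) = 1 \Rightarrow [x,y]=1$. It requires $[x,y]$ to stay inside $C_H$, where injectivity of $\ell$ is available, which is exactly why one passes to $C_H$ rather than working with $\varphi(H)$ alone. Everything else is bookkeeping with the normalization $g_{1_C} = 1_G$ and $\ell|_{F^\times G} = \id$.
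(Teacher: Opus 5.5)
Your proof is correct and takes essentially the paper's route: both directions rest on $\ell|_{F^\times G} = \id$ and the injectivity of $\ell$ on $C_H$ from Lemma~\ref{lem:8.5}. The one difference is that the commutator step is unnecessary: once $\nu(h)=1$ gives $f := \ell(h) \in F^\times \subseteq C_H$, you have $\ell(h) = f = \ell(f)$, so injectivity yields $h = f \in F^\times$ directly. This is what the paper does, and it avoids appealing to $Z(C_H) = F^\times$.
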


\begin{proof}
If $\nu(h) = 1$, then $f := \ell(h) \in F^\times$, so $\ell(f) = f = \ell(h)$; since $\ell$ is injective on $C_H$ by Lemma \ref{lem:8.5}, we have $h = f \in F^\times$, and hence $h \in \varphi(Z(H))$. Conversely, if $u \in \varphi(Z(H))$, then $u$ commutes with $\varphi(H)$ and hence with all of $D = D_{\Q}(H)$, so $u \in Z(D)^\times = F^\times$; since $\ell|_{F^\times} = \id$, we have $\nu(u) = 1$. Thus $\ker(\nu|_{\varphi(H)}) = \varphi(Z(H))$, and $\bar{\nu}(hZ(H)) := \nu(\varphi(h))$ is a well-defined injective homomorphism $H/Z(H) \hookrightarrow G/Z(G)$.
\end{proof}

\begin{lemma}\label{lem:8.7}
For any $0 \neq a \in \varphi(\Q[H])$, we have $\nu(a) \in \nu(\varphi(H))$.
\end{lemma}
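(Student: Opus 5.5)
The plan is to expand $a$ along $Z(H)$-cosets, show that the summands have pairwise distinct $\nu$-values, and then prove an ultrametric property of $\nu$. That property will show that $\nu(a)$ is the smallest of these values.

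\textbf{Step 1: coset expansion.} Write $a=\varphi(x)$ with $0\neq x\in\Q[H]$. Choose representatives $h_1,\dots,h_r\in H$ of the distinct $Z(H)$-cosets meeting $\supp(x)$. Grouping terms gives
\[
x=\sum_{j=1}^r \alpha_j h_j, \qquad 0\neq\alpha_j\in\Q[Z(H)].
\]
Each $\alpha_j$ is central in $D_{\Q}(H)$, so $f_j:=\varphi(\alpha_j)$ is a nonzero element of $Z(D)=F$. Hence
\[
a=\sum_{j=1}^r f_j\,\varphi(h_j), \qquad f_j\in F^\times.
\]
Since $\ell|_{F^\times}=\id$, we have $\nu(F^\times)=1$. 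Therefore $\nu(f_j\varphi(h_j))=\nu(\varphi(h_j))=\bar\nu(h_jZ(H))$. By the injectivity of $\bar\nu$ in Lemma \ref{lem:8.6}, these $r$ values are pairwise distinct elements of $C=G/Z(G)$.

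\textbf{Step 2: ultrametric property (the main point).} The key claim is: for $x,y\in D^\times$ with $\nu(x)<\nu(y)$ in the bi-invariant order on $C$, we have $x+y\neq 0$ and $\nu(x+y)=\nu(x)$. Write $x+y=x(1+u)$ with $u=x^{-1}y$. Since $\nu$ is a homomorphism and the order is bi-invariant, $\nu(u)=\nu(x)^{-1}\nu(y)>1$. It therefore suffices to show that $\nu(1+u)=1$ whenever $\nu(u)>1$.

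To see this, write $u=bs^{-1}$ with $b,s\in A_G\setminus\{0\}$; this is possible because $D=Q_{\mathrm{cl}}(A_G)$. Then $\nu(b)\nu(s)^{-1}=\nu(u)>1$, so $\nu(b)>\nu(s)$ by bi-invariance. Here $\nu$ on $A_G$ is just the lowest $C$-component, because $A_G=\bigoplus_c Fg_c$ is a direct sum. So in $s+b$ the lowest component of $s$, which is nonzero, is not touched by any component of $b$. Hence $s+b\neq0$ and $\nu(s+b)=\nu(s)$. Now $1+u=(s+b)s^{-1}$, and Lemma \ref{lem:8.1} gives $\nu(1+u)=\nu(s+b)\nu(s)^{-1}=1$.

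The only care needed is to justify that $\nu$ restricted to $A_G\setminus\{0\}$ agrees with the lowest-coset map $\ell_0$ followed by projection to $C$. This holds by construction of $\ell$ via Lemma \ref{lem:8.1}.

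\textbf{Step 3: conclusion.} Reorder so that $\nu(\varphi(h_1))<\nu(\varphi(h_j))$ for all $j\ge2$. By Step 2 and induction on $r$, the partial sums $\sum_{j\ge2}f_j\varphi(h_j)$ have $\nu$ equal to their minimum, which exceeds $\nu(\varphi(h_1))$. Hence
\[
\nu(a)=\nu(f_1\varphi(h_1))=\nu(\varphi(h_1))\in\nu(\varphi(H)).
\]

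I expect Step 2 to be the only substantive step: it is where the valuation on the central localization, which has direct-sum graded components, must be transferred to all of $D$ through Ore fractions.
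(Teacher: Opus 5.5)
Your proposal is correct and follows the same route as the paper: you expand along $Z(H)$-cosets with coefficients in $F$, use Lemma \ref{lem:8.6} to get pairwise distinct $\nu$-values for the summands, and conclude that the minimum is not cancelled. Your Step 2 derives the strict ultrametric property of $\nu$ on all of $D^\times$ from the graded structure of $A_G$ via $1+bs^{-1}=(s+b)s^{-1}$, and this supplies a justification that the paper's one-line ``without cancellation'' leaves implicit, since the summands $f_j\varphi(h_j)$ need not lie in $A_G$.
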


\begin{proof}
Grouping $a$ along $H/Z(H)$-cosets gives $a = \sum_{i=1}^r c_i h_i$, where $h_i \in \varphi(H)$ represent distinct $H/Z(H)$-cosets and $0 \neq c_i \in \varphi(\Q[Z(H)]) \subseteq F$. Each term $c_i h_i$ has lowest coset $\nu(h_i)$, and by Lemma \ref{lem:8.6} these cosets are pairwise distinct; hence the minimum coset of the sum is the smallest $\nu(h_i)$ without cancellation.
\end{proof}

\begin{lemma}\label{lem:8.8}
$\nu(\varphi(H)) = G/Z(G)$.
\end{lemma}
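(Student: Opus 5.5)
We want to show that $\nu(\varphi(H)) = G/Z(G)$. The inclusion $\nu(\varphi(H)) \subseteq G/Z(G)$ is trivial, so only surjectivity needs work. The plan is to use Lemma \ref{lem:8.7} together with the fact that $\nu$ is a group homomorphism on $D^\times$, with $D = Q_{\mathrm{cl}}(\varphi(\Q[H]))$.

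\textbf{Step 1: reduce to fractions.} Fix $c \in C = G/Z(G)$ and a representative $g \in G$ of $c$, so that $\nu(g) = c$ because $\ell(g) = g$. Since $\varphi$ is an isomorphism of division rings and $D_{\Q}(H)$ is the Ore fraction ring of $\Q[H]$, we can write $\varphi^{-1}(g) = \alpha\beta^{-1}$ with $0 \neq \alpha, \beta \in \Q[H]$. Setting $a = \varphi(\alpha)$ and $b = \varphi(\beta)$, both are nonzero elements of $\varphi(\Q[H])$, and $g = ab^{-1}$ in $D$.

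\textbf{Step 2: apply Lemma \ref{lem:8.7}.} Since $\nu : D^\times \to C$ is a homomorphism (it is the composite of the homomorphism $\ell$ from Lemma \ref{lem:8.1} with the quotient map), we get $c = \nu(g) = \nu(a)\nu(b)^{-1}$. By Lemma \ref{lem:8.7}, both $\nu(a)$ and $\nu(b)$ lie in $\nu(\varphi(H))$. This set is a subgroup of $C$, being the image of the group $\varphi(H)$ under a homomorphism, so $c \in \nu(\varphi(H))$. Hence $\nu(\varphi(H)) = C$.

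\textbf{Remaining check.} Nothing here is a real obstacle. The one point to confirm is that the $\nu$ appearing in Lemma \ref{lem:8.7} is the same homomorphism $\nu : D^\times \to C$ defined via $\ell$ on all of $D^\times$. It is: the proof of Lemma \ref{lem:8.7} computes the lowest coset of $a$ viewed as an element of $A_G$, and this agrees with $\ell$ because $\ell$ extends $\ell_0$. Lemma \ref{lem:8.7} is applied to elements of $\varphi(\Q[H])$, which need not lie in $A_G$ a priori. However, its proof only uses that $a$ is an $F$-combination of elements $h_i$ with pairwise distinct $\nu(h_i)$. The lowest-coset argument then runs through multiplicativity of $\ell$ on $D^\times$, which extends the ultrametric-type behaviour of $\ell_0$. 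So the statement is available exactly as formulated.

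Combined with Lemma \ref{lem:8.6}, this yields an isomorphism $\bar{\nu} : H/Z(H) \xrightarrow{\sim} G/Z(G)$.
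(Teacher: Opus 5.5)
Your argument is correct and is essentially the paper's proof: write $g = \varphi(a)\varphi(b)^{-1}$ with $a, b \in \Q[H]\setminus\{0\}$, use that $\nu$ is a homomorphism on $D^\times$, and apply Lemma \ref{lem:8.7} to the numerator and the denominator. Your closing ``remaining check'' is not needed, since Lemma \ref{lem:8.7} can simply be cited; note also that multiplicativity of $\ell$ does not by itself give the no-cancellation property for elements $h_i \notin A_G$, which instead requires extending the ultrametric inequality of $\nu$ to Ore fractions via common denominators.
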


\begin{proof}
Take any $g \in G$. Since $\varphi : D_{\Q}(H) \to D$ is surjective, there exist $0 \neq a, b \in \Q[H]$ such that $g = \varphi(a)\varphi(b)^{-1}$. Then $gZ(G) = \nu(g) = \nu(\varphi(a))\nu(\varphi(b))^{-1} \in \nu(\varphi(H))$ by Lemma \ref{lem:8.7}.
\end{proof}

Thus $\bar{\nu} : H/Z(H) \xrightarrow{\sim} G/Z(G)$ is an isomorphism, and moreover
\[
\ell\bigl(F^\times \varphi(H)\bigr) = F^\times G:
\]
the left-hand side is contained in the right-hand side, and for any $g \in G$, Lemma \ref{lem:8.8} gives $h \in \varphi(H)$ with $\nu(h) = gZ(G)$, so $\ell(h) = fgz$ for some $f \in F^\times$ and $z \in Z(G) \subseteq F^\times$, whence $g = \ell(h)f^{-1}z^{-1} \in F^\times \ell(h)$.

Define $H_1 := \ell(\varphi(H)) \le F^\times G$. Since both $\varphi$ and $\ell|_{C_H}$ are injective, $H_1 \cong H$ and $F^\times H_1 = F^\times G$.

\begin{lemma}\label{lem:8.9}
The natural map $i : \Q[H_1] \longrightarrow D$ is injective.
\end{lemma}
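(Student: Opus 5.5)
The plan is to show that the natural $\Q$-linear map $i : \Q[H_1] \to D$ is injective: distinct elements of $H_1$ are $F$-linearly independent in $D$, hence $\Q$-linearly independent. The tool is the lowest-coset map $\nu : D^\times \to C = G/Z(G)$ together with the strong grading $A_G = \bigoplus_{c\in C} F g_c$. Every element of $H_1 = \ell(\varphi(H))$ lies in $F^\times G$, so it is a homogeneous element $f g_c$ of $A_G \subseteq D$. Thus a finite sum $\sum_j q_j x_j$ with $q_j \in \Q$ and $x_j \in H_1$ distinct can be read directly inside the graded ring $A_G$, which embeds in $D$ as a localization of $\Q[G]$.

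First, I group the $x_j$ according to their cosets $\nu(x_j) \in C$. Since $A_G = \bigoplus_c A_c$ is a direct sum, the relation $\sum_j q_j x_j = 0$ forces, for each coset $c$, $\sum_{\nu(x_j) = c} q_j x_j = 0$ in $A_c = F g_c$.

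Second, I handle a single coset. Fix $c$ and two distinct $x, x' \in H_1$ with $\nu(x) = \nu(x') = c$. Then $x' x^{-1} \in H_1$ has $\nu = 1$, so $x'x^{-1} \in F^\times$. I need to know what this means in $H$. Write $x = \ell(\varphi(h))$ and $x' = \ell(\varphi(h'))$. By Lemma \ref{lem:8.6}, together with the fact that $\ell$ is injective on $C_H$ (Lemma \ref{lem:8.5}), $\nu(\varphi(h'h^{-1})) = 1$ forces $h'h^{-1} \in Z(H)$. Moreover $\varphi(h'h^{-1}) \in F^\times$ and $\ell|_{F^\times} = \id$, so $x'x^{-1} = \varphi(h'h^{-1}) \in \varphi(Z(H))$. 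Hence within one coset we have $x_j = \varphi(z_j)\, x_{j_0}$ with $z_j \in Z(H)$ pairwise distinct. The relation then becomes $\bigl(\sum_j q_j \varphi(z_j)\bigr) x_{j_0} = 0$. Since $x_{j_0}$ is a unit, this gives $\varphi\bigl(\sum_j q_j z_j\bigr) = 0$ in $D$.

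Third, injectivity of $\varphi$ on $D_{\Q}(H) \supseteq \Q[H] \supseteq \Q[Z(H)]$ gives $\sum_j q_j z_j = 0$ in $\Q[Z(H)]$. Since the $z_j$ are distinct group elements, all $q_j = 0$. Doing this for every coset shows all coefficients vanish, so $i$ is injective.

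The main obstacle is the second step: passing from $x'x^{-1} \in F^\times$ back to a genuine element of $\varphi(Z(H))$, rather than an arbitrary central scalar in $F^\times$. This requires checking carefully that $\ell$ restricted to $\varphi(Z(H))$ is the identity, and that $\ell|_{C_H}$ is injective, so that distinct elements of $H_1$ correspond to distinct $z_j$. If that identification were murky, I would first verify directly that $\ell(\varphi(z)) = \varphi(z)$ for $z \in Z(H)$, using $\varphi(Z(H)) \subseteq F^\times$ and $\ell|_{F^\times} = \id$.
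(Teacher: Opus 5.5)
Your proof is correct and is essentially the paper's argument: both split a vanishing sum along the strong grading $A_G=\bigoplus_{c} Fg_c$ using Lemma \ref{lem:8.6}, and then handle each homogeneous component via $Z(H_1)=\varphi(Z(H))$ and the injectivity of $\varphi$ on $\Q[Z(H)]$. The paper groups by $H_1/Z(H_1)$-cosets with coefficients in $\Q[Z(H_1)]$, while you group by $\nu$-values; by Lemma \ref{lem:8.6} these are the same partition. One small correction: your opening claim that distinct elements of $H_1$ are $F$-linearly independent is not right, since for $1\neq z\in Z(H)$ the distinct elements $x$ and $\varphi(z)x$ of $H_1$ are $F$-dependent, but your steps only establish and use $\Q$-linear independence, so the proof is unaffected.
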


\begin{proof}
By Lemma \ref{lem:8.5}, $\ell : \varphi(H) \xrightarrow{\sim} H_1$ is an isomorphism with $\ell|_{F^\times} = \id$ and $\varphi(Z(H)) \subseteq F^\times$, so $Z(H_1) = \ell(\varphi(Z(H))) = \varphi(Z(H))$, and $\Q[Z(H_1)] \hookrightarrow F$ is injective. If $\Q[H_1] \to D$ were not injective, grouping a nonzero element of the kernel along $H_1/Z(H_1)$-cosets would give
\[
\sum_{i=1}^r c_i h_i = 0, \qquad 0 \neq c_i \in \Q[Z(H_1)] \subseteq F^\times,
\]
where $h_1, \dots, h_r \in H_1$ represent distinct $H_1/Z(H_1)$-cosets. By Lemma \ref{lem:8.6}, the cosets $\nu(h_i) \in G/Z(G)$ are pairwise distinct, so the terms $c_i h_i \in F^\times G$ lie in distinct homogeneous components of $A_G = \bigoplus_{c \in G/Z(G)} Fg_c$, forcing $c_i h_i = 0$ for each $i$, a contradiction since $c_i \in F^\times$ and $h_i \in D^\times$.
\end{proof}

\begin{proposition}\label{prop:8.10}
The division ring of fractions of $\Q[H_1]$ inside $D$ is $D$ itself.
\end{proposition}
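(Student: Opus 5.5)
The plan is to show that the division subring $D_1 \subseteq D$ generated by $i(\Q[H_1])$ contains both $F$ and $G$. Since $D$ is the division ring generated by $\Q[G]$, that forces $D_1 = D$.

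First I identify $D_1$ with the Ore division ring of fractions of $\Q[H_1]$. By Lemma \ref{lem:8.5}, $H_1 \cong \varphi(H) \cong H$, so $H_1$ is torsion-free nilpotent. Hence $\Q[H_1]$ is an Ore domain with classical division ring of fractions $D_{\Q}(H_1)$, as recalled at the start of Section 7. By Lemma \ref{lem:8.9}, $i : \Q[H_1] \to D$ is an injective ring homomorphism into a division ring. Every nonzero element of $\Q[H_1]$ therefore becomes invertible in $D$, and the universal property of Ore localization extends $i$ to an embedding $D_{\Q}(H_1) \hookrightarrow D$. Its image consists of all $i(a)i(s)^{-1}$ with $a, s \in \Q[H_1]$, $s \neq 0$. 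This image is a division subring containing $i(\Q[H_1])$, and it is contained in any such division subring, so it equals $D_1$. It remains to prove $D_1 = D$.

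Next I show $F \subseteq D_1$. From the proof of Lemma \ref{lem:8.9}, $Z(H_1) = \varphi(Z(H))$, because $\ell$ restricts to the identity on $\varphi(Z(H)) \subseteq F^\times$. Hence $\Q[Z(H_1)] = \varphi(\Q[Z(H)])$ sits inside $D_1$, and so do the quotients of its elements. Since $\varphi$ is a ring isomorphism,
\[
\Frac \Q[Z(H_1)] = \varphi\bigl(\Frac \Q[Z(H)]\bigr) = F,
\]
where the last equality is the identification of centers via Proposition \ref{prop:7.2} made at the start of Section 8. Thus $F \subseteq D_1$.

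Finally, $F^\times H_1 = F^\times G$ gives $G \subseteq F^\times H_1 \subseteq D_1$, hence $\Q[G] \subseteq D_1$. Every element of $D = D_{\Q}(G)$ has the form $ab^{-1}$ with $a, b \in \Q[G]$, so $D \subseteq D_1 \subseteq D$.

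The step needing most care is the second: checking that the center of $H_1$, computed inside $D$, really generates all of $F$ as a field. This is where the earlier facts $\ell|_{F^\times} = \id$ and $\varphi(Z(H)) \subseteq F^\times$ are used. The remaining steps are formal.
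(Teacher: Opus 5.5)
Your proposal is correct and follows the paper's proof essentially step for step. You extend the embedding $\Q[H_1] \hookrightarrow D$ of Lemma \ref{lem:8.9} to $D_{\Q}(H_1) \hookrightarrow D$ via the Ore universal property, obtain $F = \Frac \Q[Z(H_1)] \subseteq D_{\Q}(H_1)$ from $Z(H_1) = \varphi(Z(H))$, and conclude from $G \subseteq F^\times H_1$ that $D = D_{\Q}(G) \subseteq D_{\Q}(H_1)$.
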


\begin{proof}
Since $H_1 \cong H$, $\Q[H_1]$ is an Ore domain, and by Lemma \ref{lem:8.9} the inclusion $\Q[H_1] \hookrightarrow D$ extends to an embedding $D_{\Q}(H_1) \hookrightarrow D$. Since $Z(H_1) = \varphi(Z(H))$, we have $F = \Frac \Q[Z(H_1)] \subseteq D_{\Q}(H_1)$, and since $F^\times H_1 = F^\times G$, we have $G \subseteq F^\times H_1 \subseteq D_{\Q}(H_1)^\times$, whence $\Q[G] \subseteq D_{\Q}(H_1)$ and $D = D_{\Q}(G) \subseteq D_{\Q}(H_1)$. Thus $D = D_{\Q}(H_1)$.
\end{proof}

Henceforth we denote $H_1$ simply by $H$, so that inside $D$ we have
\[
\Q[G], \Q[H] \subset D \text{ faithfully}, \qquad D_{\Q}(G) = D_{\Q}(H) = D, \qquad F^\times G = F^\times H.
\]

\section{The Lowest-Term Coefficient Homomorphism and Central Direct Product Decompositions}

Having realized $H$ as a subgroup of $F^\times G$ satisfying $F^\times G = F^\times H$, we now use the lowest-term rational coefficient homomorphism $c : D^\times \to \Q^\times$ on $\Q[G]$ to normalize out rational scalars, embedding both $G$ and $H$ into $E = (F^\times G) \cap \ker c = KG = KH$. Using the fact that the monomial quotient group of a torsion-free abelian group over a field of characteristic zero is free abelian, we then show that there exist free abelian subgroups $C_G, C_H \le K$ such that $E = G \times C_G = H \times C_H$.

Fix a bi-invariant total ordering $<$ on $G$. For
\[
0 \neq a = \sum_{g \in E_a} q_g g \in \Q[G] \qquad (q_g \in \Q^\times)
\]
with $g_0 = \min E_a$, define the lowest-term coefficient map
\[
c_0(a) := q_{g_0} \in \Q^\times.
\]

\begin{lemma}\label{lem:9.1}
$c_0 : \Q[G] \setminus \{0\} \to \Q^\times$ is multiplicative and thus extends uniquely to a group homomorphism $c : D^\times \to \Q^\times$ satisfying $c(q) = q$ ($q \in \Q^\times$) and $c(g) = 1$ ($g \in G$).
\end{lemma}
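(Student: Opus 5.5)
The plan has three steps: prove $c_0$ is multiplicative on nonzero elements of $\Q[G]$, extend it to $D^\times$ by Lemma \ref{lem:8.1}, and then check the normalizations.

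\textbf{Multiplicativity.} Let $0 \neq a, b \in \Q[G]$ have lowest support elements $g_0 = \min E_a$ and $h_0 = \min E_b$. Because $<$ is bi-invariant, for $g \ge g_0$ and $h \ge h_0$ we have $gh \ge g_0 h \ge g_0 h_0$. Equality $gh = g_0h_0$ forces both inequalities to be equalities. Right cancellation in the first gives $g = g_0$, and then left cancellation in the second gives $h = h_0$.

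It follows that $g_0h_0$ is the minimum of the support of $ab$, and the only pair contributing to it is $(g_0, h_0)$. So the coefficient of $g_0h_0$ in $ab$ is $q_{g_0} q'_{h_0} \neq 0$, which gives $c_0(ab) = c_0(a)c_0(b)$. This is the same mechanism as in Lemma \ref{lem:2.2} and Lemma \ref{lem:8.2}, now applied to coefficients rather than exponents.

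\textbf{Extension.} $\Q[G]$ is a right Ore domain with $Q_{\mathrm{cl}}(\Q[G]) = D$, by the Ore property recalled in Section 7. Applying Lemma \ref{lem:8.1} with the abelian target $\Gamma = \Q^\times$ and $\lambda_0 = c_0$ yields a unique group homomorphism $c : D^\times \to \Q^\times$ with $c(as^{-1}) = c_0(a)c_0(s)^{-1}$. Uniqueness holds because every element of $D^\times$ is such a fraction, and any multiplicative extension is forced to take this value.

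\textbf{Normalizations.} For $q \in \Q^\times$, the element $q = q \cdot 1_G$ is a monomial with coefficient $q$, so $c(q) = q$. For $g \in G$, the element $g$ is a monomial with coefficient $1$, so $c(g) = 1$.

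The only step that needs care is the uniqueness of the contributing pair in the multiplicativity argument. It relies on the ordering being bi-invariant, not merely left-invariant, and this is exactly what Malcev's theorem provides for torsion-free nilpotent $G$.
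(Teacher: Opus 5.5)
Your proposal is correct and follows essentially the same route as the paper: bi-invariance of $<$ makes $g_0h_0$ the unique minimal support element of $ab$ with coefficient $c_0(a)c_0(b)$, and Lemma~\ref{lem:8.1} then gives the unique extension $c$. You spell out details the paper leaves implicit, namely the cancellation argument and the normalizations $c(q)=q$ and $c(g)=1$.
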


\begin{proof}
By the bi-invariance of $<$, if $a, b \in \Q[G] \setminus \{0\}$ have minimum support elements $g_0, h_0$, then $g_0 h_0$ is the unique minimum support element of $ab$ with coefficient $c_0(a)c_0(b)$. Applying Lemma \ref{lem:8.1} yields the unique extension $c : D^\times \to \Q^\times$.
\end{proof}

Define $\eta : H \to D^\times$ by $\eta(h) = c(h)^{-1}h$.

\begin{lemma}\label{lem:9.2}
$\eta$ is an injective group homomorphism; denoting its image by $H^\circ$, we have
\[
H^\circ \cong H, \qquad \Q[H^\circ] = \Q[H] \subset D, \qquad F^\times H^\circ = F^\times H = F^\times G,
\]
and $H^\circ \le \ker c$.
\end{lemma}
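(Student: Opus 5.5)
The plan is to derive each clause directly from the fact that $c : D^\times \to \Q^\times$ is a homomorphism into an abelian group. The scalars $c(h)$ are central in $D$, since $\Q \subseteq Z(D)$.

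\emph{Homomorphism.} For $h, h' \in H$ we have
\[
\eta(hh') = c(hh')^{-1}hh' = c(h)^{-1}c(h')^{-1}hh' = \bigl(c(h)^{-1}h\bigr)\bigl(c(h')^{-1}h'\bigr) = \eta(h)\eta(h'),
\]
using that $c$ is multiplicative and $c(h') \in \Q^\times$ commutes with $h$.

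\emph{Image in $\ker c$.} Since $c(q) = q$ for $q \in \Q^\times$ (Lemma \ref{lem:9.1}), we get
\[
c(\eta(h)) = c(h)^{-1}c(h) = 1.
\]
Hence $H^\circ \le \ker c$.

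\emph{Injectivity.} If $\eta(h) = 1$, then $h = c(h) \in \Q^\times$. But $h$ lies in $H$, and $\Q[H] \to D$ is faithful: the element $h - c(h)\cdot 1$ of $\Q[H]$ maps to $0$, which forces $h = 1$ as a group element. In particular $c(h) = 1$. Alternatively, a nontrivial element of $H$ equal to a rational scalar would give a torsion-free central element $h$ with $h - q \cdot 1_H \ne 0$ in $\Q[H]$ but zero in $D$, contradicting Lemma \ref{lem:8.9}. Therefore $\ker \eta = 1$ and $H^\circ \cong H$.

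\emph{Group algebras and $F^\times$-spans.} Each $\eta(h)$ differs from $h$ by a unit of $\Q^\times \subseteq F^\times$. So the $\Q$-spans of $H$ and $H^\circ$ in $D$ coincide, giving $\Q[H^\circ] = \Q[H]$ as subrings of $D$. The same observation gives $F^\times H^\circ = F^\times H$, and $F^\times H = F^\times G$ was established after Proposition \ref{prop:8.10}.

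The only point needing care is the faithfulness of $\Q[H^\circ] \to D$, i.e.\ that distinct elements of $H^\circ$ stay $\Q$-linearly independent in $D$. This follows from the faithfulness of $\Q[H] \subset D$, because rescaling basis vectors by nonzero rationals preserves linear independence.
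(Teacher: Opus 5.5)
Your proof is correct and follows the paper's argument essentially step for step. Centrality of $\Q^\times$ gives the homomorphism property, faithfulness of $\Q[H]\subset D$ rules out $h=c(h)\in\Q^\times$ for $h\neq 1$, rescaling by $\Q^\times\subseteq F^\times$ gives $\Q[H^\circ]=\Q[H]$ and $F^\times H^\circ=F^\times H$, and $c(q)=q$ gives $H^\circ\le\ker c$; your explicit check that $\Q[H^\circ]\to D$ remains faithful is a small addition the paper leaves implicit.
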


\begin{proof}
Since $c|_H$ takes values in the central subgroup $\Q^\times$, $\eta$ is a group homomorphism. If $\eta(h) = 1$, then $h = c(h) \in \Q^\times$; since $\Q[H] \hookrightarrow D$ is faithful, the elements of $H$ are $\Q$-linearly independent in $D$, so $h = 1$ and $\eta$ is injective. Since $h = c(h)\eta(h)$ with $c(h) \in \Q^\times \subseteq F^\times$, $H$ and $H^\circ$ span the same $\Q$-subalgebra $\Q[H^\circ] = \Q[H]$ and satisfy $F^\times H^\circ = F^\times H$. Finally, $c(\eta(h)) = c(h)^{-1}c(h) = 1$, so $H^\circ \le \ker c$.
\end{proof}

Replacing $H$ by $H^\circ$, we may assume henceforth that $G, H \le \ker c$. Define
\[
E := (F^\times G) \cap \ker c, \qquad K := F^\times \cap \ker c.
\]

\begin{lemma}\label{lem:9.3}
We have
\[
F^\times = \Q^\times \times K, \qquad E = KG = KH.
\]
\end{lemma}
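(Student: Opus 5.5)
The plan is to use the homomorphism $c : D^\times \to \Q^\times$ of Lemma \ref{lem:9.1} restricted to $F^\times$. The restriction is a retraction onto the central subgroup $\Q^\times$, which gives the first decomposition. The second decomposition then comes from intersecting $F^\times G = F^\times H$ with $\ker c$.

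\emph{Step 1: $F^\times = \Q^\times \times K$.} Since $\Q \subseteq F$ and $c(q) = q$ for $q \in \Q^\times$, the restriction $c|_{F^\times} : F^\times \to \Q^\times$ is a surjective homomorphism that restricts to the identity on the subgroup $\Q^\times \le F^\times$. Hence every $f \in F^\times$ factors as $f = c(f)\cdot\bigl(c(f)^{-1}f\bigr)$, and the second factor lies in $F^\times \cap \ker c = K$. We also have $\Q^\times \cap K = 1$, because $c(q) = q$. Since $F^\times$ is abelian, this gives an internal direct product $F^\times = \Q^\times \times K$.

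\emph{Step 2: $E = KG$.} The inclusion $KG \subseteq E$ holds because $K \le F^\times \cap \ker c$ and $G \le \ker c$, the latter since $c(g) = 1$. For the reverse, I take $x = fg \in E$ with $f \in F^\times$ and $g \in G$. Then
\[
1 = c(x) = c(f)c(g) = c(f),
\]
so $f \in K$ and $x \in KG$.

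\emph{Step 3: $E = KH$.} This is the same argument, using that $F^\times H = F^\times G$ (Lemma \ref{lem:9.2} after the replacement by $H^\circ$) and that $H \le \ker c$. First, $E = (F^\times H) \cap \ker c$. Any $x = fh$ in it has $c(x) = c(f)c(h) = c(f) = 1$, so $f \in K$. Conversely $KH \subseteq E$.

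No step looks like a genuine obstacle. The only point to check carefully is that $c$ is defined on all of $D^\times$, which includes $F^\times$, and that it restricts to the identity on $\Q^\times$. Both facts are part of Lemma \ref{lem:9.1}. I also need that $c|_H \equiv 1$ holds after the normalization, which is the last assertion of Lemma \ref{lem:9.2}.
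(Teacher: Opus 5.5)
Your proposal is correct and follows the paper's proof: the retraction $c|_{F^\times}$ onto $\Q^\times$ splits off the rational scalars, and evaluating $c$ on $x = fg$ (respectively $x = fh$) forces $f \in K$. You also spell out $\Q^\times \cap K = 1$ and the inclusions $KG, KH \subseteq E$, which the paper leaves implicit.
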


\begin{proof}
Every $f \in F^\times$ decomposes uniquely as $f = c(f)\cdot (c(f)^{-1}f) \in \Q^\times \times K$. For any $x = fg \in E$ ($f \in F^\times, g \in G$), $1 = c(x) = c(f)c(g) = c(f)$ gives $f \in K$, so $E = KG$; similarly $F^\times H = F^\times G$ gives $E = KH$.
\end{proof}

\begin{lemma}\label{lem:9.4}
$Z(E) = K$.
\end{lemma}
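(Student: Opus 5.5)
We show the two inclusions $K \le Z(E)$ and $Z(E) \le K$ separately.

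\emph{Step 1: $K \le Z(E)$.} By definition $K = F^\times \cap \ker c$. Since $F = Z(D)$, every element of $K$ commutes with all of $D^\times$, and in particular with $E$. By Lemma \ref{lem:9.3} we have $E = KG$, so $K \le E$. Hence $K \le Z(E)$.

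\emph{Step 2: $Z(E) \le K$.} Let $x \in Z(E)$. By Lemma \ref{lem:9.3} we may write $x = fg$ with $f \in K$ and $g \in G$.

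\begin{itemize}
\item Since $f$ is central in $D$, the element $g = f^{-1}x$ commutes with every element of $E$.
\item In particular, $g$ commutes with every element of $G \le E$, so $g \in Z(G)$.
\item By Proposition \ref{prop:7.2}, $Z(G) \subseteq \Frac \Q[Z(G)] = F$, so $g \in F^\times$.
\item Our standing normalization gives $G \le \ker c$, so $c(g) = 1$. Hence $g \in F^\times \cap \ker c = K$.
\end{itemize}

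Therefore $x = fg \in K$, and $Z(E) \le K$.

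\emph{Points to check.} The argument only needs two facts.
\begin{itemize}
\item After replacing $H$ by $H^\circ$, the group $G$ still lies in $\ker c$. This holds because $c(g) = 1$ for all $g \in G$ by Lemma \ref{lem:9.1}.
\item The decomposition $E = KG$ from Lemma \ref{lem:9.3}.
\end{itemize}

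The step most prone to error is the last bullet of Step 2: showing that a central element $g$ of $G$ lies in $K$ and not merely in $F^\times$. This is exactly where $c(g) = 1$ is used.

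By symmetry, the same argument using $E = KH$ shows $Z(E) \cap H = Z(H)$. This is not needed for the lemma, but it will be useful later.
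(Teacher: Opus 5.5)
Your proof is correct and takes essentially the same approach as the paper: $K \le Z(E)$ because $K \subseteq F^\times = Z(D)^\times$, and for $x = fg \in Z(E)$ the central factor $f$ drops out of commutators, which forces $g \in Z(G) \subseteq F^\times$. The only cosmetic difference is in the last step: the paper concludes directly that $x \in F^\times \cap E = K$, whereas you use $c(g)=1$ to get $g \in F^\times \cap \ker c = K$ and hence $x = fg \in K$; these amount to the same fact.
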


\begin{proof}
Since $K \le F^\times = Z(D)^\times$, we have $K \le Z(E)$. Conversely, if $x = kg \in Z(E)$ ($k \in K, g \in G$), then for every $g_0 \in G$ we have $1 = [kg, g_0] = [g, g_0]$, so $g \in Z(G) \subseteq F^\times$, whence $x \in F^\times \cap E = K$.
\end{proof}

Set $U := K \cap G$ and $V := K \cap H$.

\begin{lemma}\label{lem:9.5}
We have $U = Z(G)$, $V = Z(H)$, and $F = \Q(U) = \Q(V)$.
\end{lemma}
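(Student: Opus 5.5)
We prove $U = Z(G)$ first; the argument for $V = Z(H)$ is symmetric, since after the replacement $H \mapsto H^\circ$ we also have $H \le \ker c$ and $F^\times H = F^\times G$. Then we identify $F$.

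For $Z(G) \subseteq U$: by Proposition \ref{prop:7.2}, $Z(G) \subseteq \Frac \Q[Z(G)]^\times = F^\times$. By Lemma \ref{lem:9.1}, $c(g) = 1$ for every $g \in G$, so $Z(G) \subseteq \ker c$. Hence $Z(G) \subseteq F^\times \cap \ker c \cap G = K \cap G = U$.

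For $U \subseteq Z(G)$: an element $u \in K \cap G$ lies in $F^\times = Z(D)^\times$, so it commutes with every element of $G \subseteq D$. Thus $u \in Z(G)$.

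The same two steps give $V = Z(H)$, with the following justifications.
\begin{itemize}
\item For $Z(H) \subseteq F^\times$: we know from the construction in Section 8 that $Z(H_1) = \varphi(Z(H)) \subseteq F^\times$. The rescaling $\eta$ of Lemma \ref{lem:9.2} multiplies each element by a rational scalar $c(h)^{-1} \in \Q^\times$. This does not change centrality, because $\Q^\times$ is central. It also keeps elements in $F^\times$, because $\Q^\times \subseteq F^\times$.
\item For $Z(H) \subseteq \ker c$: we have $H \le \ker c$ by Lemma \ref{lem:9.2}.
\item For $V \subseteq Z(H)$: the argument is identical to the one for $G$, since elements of $K \subseteq F^\times$ are central in $D$.
\end{itemize}

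For the last claim, $F = \Q(U)$ is immediate from $U = Z(G)$ and Proposition \ref{prop:7.2}: the subfield of $D$ generated by $\Q[Z(G)]$ is $\Frac \Q[Z(G)] = F$. For $H$, apply Proposition \ref{prop:7.2} to $H$ together with $D_\Q(H) = D$. This shows that $Z(D) = F$ equals the subfield generated by $\Q[Z(H)]$. The identification $\Q[H] \subset D$ with $D_\Q(H) = D$ is faithful by Lemma \ref{lem:8.9}, Proposition \ref{prop:8.10}, and Lemma \ref{lem:9.2}. Therefore $F = \Q(Z(H)) = \Q(V)$.

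The only step needing care is making sure Proposition \ref{prop:7.2} applies to the modified group $H$, that is, to $H^\circ$ inside $D$. This needs the embedding $\Q[H^\circ] \hookrightarrow D$ to be faithful with classical fraction ring $D$. Lemma \ref{lem:9.2} gives $\Q[H^\circ] = \Q[H_1]$, and Lemma \ref{lem:8.9} and Proposition \ref{prop:8.10} give the faithfulness and the equality of fraction rings. With that in place, the abstract isomorphism $D_\Q(H^\circ) \cong D$ restricts to the identity on $\Q[H^\circ]$, and it carries the center $\Frac\Q[Z(H^\circ)]$ onto $F$.
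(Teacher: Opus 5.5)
Your proof is correct and follows the paper's argument. You get $Z(G) \subseteq U$ from Proposition~\ref{prop:7.2} and $G \le \ker c$, and $U \subseteq Z(G)$ from the centrality of $K$; then Proposition~\ref{prop:7.2} gives $F = \Q(U) = \Q(V)$. The only differences are minor: you use $K \subseteq Z(D)^\times$ directly instead of $K = Z(E)$ from Lemma~\ref{lem:9.4}, and you make explicit the faithfulness of $\Q[H^\circ] \subset D$ with $D_\Q(H^\circ) = D$ needed to apply Proposition~\ref{prop:7.2} to the rescaled $H$, which the paper compresses into ``similarly''.
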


\begin{proof}
Since $K = Z(E)$ and $G \le E$, we have $U = K \cap G \le Z(G)$; conversely, by Proposition \ref{prop:7.2} and $G \le \ker c$, $Z(G) \le F^\times \cap G \cap \ker c = U$, so $U = Z(G)$ and similarly $V = Z(H)$. Proposition \ref{prop:7.2} then gives $F = \Frac \Q[Z(G)] = \Q(U) = \Q(V)$.
\end{proof}

Since $U = Z(G)$ and $F = \Q(U)$, we have $F^\times / (\Q^\times U) = \Q(U)^\times / (\Q^\times U)$. In our earlier preprint \cite{LinWang2026}, we proved that for any field $k$ of characteristic $0$ and any torsion-free abelian group $U$, the quotient group $k(U)^\times / (k^\times U)$ is a free abelian group (for classical background see also May \cite{May1972,May1979}); to make the present paper self-contained, we include the proof here.

For any field $k$ of characteristic zero and any torsion-free abelian group $A$ written additively, denote formal monomials in $k[A]$ by $X^a$ ($a \in A$), set $k(A) = \Frac k[A]$, and write
\[
\Delta_k(A) := k(A)^\times / (k^\times X^A),
\]
where $k^\times X^A = \{cX^a : c \in k^\times, \, a \in A\} \le k(A)^\times$.

We first treat the single-variable rational-exponent case. Let $F_0$ be a field of characteristic zero and let $t$ be transcendental over $F_0$. In a fixed algebraic closure of $F_0(t)$, choose elements $t^{1/n!}$ compatibly so that $(t^{1/(n+1)!})^{n+1} = t^{1/n!}$ for all $n \ge 1$. For $q = a/n! \in \Q$, set $t^q := (t^{1/n!})^a$, which is independent of the factorial denominator. Write $t^{\Q} := \{t^q : q \in \Q\}$ and $F_0(t^{\Q}) := \bigcup_{n \ge 1} F_0(t^{1/n!})$.

\begin{lemma}\label{lem:9.6-puiseux}
If $\operatorname{char} F_0 = 0$, then the quotient group
\[
\mathcal{P}(F_0, t) := F_0(t^{\Q})^\times / (F_0^\times t^{\Q})
\]
is a free abelian group.
\end{lemma}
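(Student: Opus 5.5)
We prove that $\mathcal{P}(F_0,t)$ is free by writing it as the union of an increasing chain of free groups, where each group in the chain is a direct summand of the next. Set $t_n := t^{1/n!}$ and $L_n := F_0(t_n)$, so that $F_0(t^{\Q}) = \bigcup_n L_n$. Put
\[
\mathcal{P}_n := L_n^\times/(F_0^\times t_n^{\Z}).
\]
Because $L_n = F_0(t_n)$ is a rational function field in one variable, unique factorization in $F_0[t_n]$ gives
\[
L_n^\times = F_0^\times \times t_n^{\Z} \times \bigoplus_{\pi} \pi^{\Z},
\]
where $\pi$ runs over the monic irreducible polynomials of $F_0[t_n]$ other than $t_n$. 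Hence $\mathcal{P}_n$ is free abelian with basis the classes of these $\pi$. We then check three things in turn:

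\begin{itemize}
\item The natural maps $\mathcal{P}_n \to \mathcal{P}_{n+1}$ are injective. This reduces to showing $L_n^\times \cap F_0^\times t_{n+1}^{\Z} = F_0^\times t_n^{\Z}$, which holds because $t_{n+1}^{j} \in L_n$ forces $(n+1) \mid j$ (compare degrees in $t_{n+1}$, since $[L_{n+1}:L_n] = n+1$).
\item $\mathcal{P}(F_0,t) = \bigcup_n \mathcal{P}_n$. This uses that $F_0^\times t^{\Q} \cap L_n^\times = F_0^\times t_n^{\Z}$, which follows by the same degree argument.
\item Each $\mathcal{P}_n$ is a direct summand of $\mathcal{P}_{n+1}$ with free complement.
\end{itemize}

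Once these hold, a chain $\mathcal{P}_1 \le \mathcal{P}_2 \le \cdots$ of free groups with free successive quotients $\mathcal{P}_{n+1}/\mathcal{P}_n$ has a free union. Indeed, we choose bases of the complements and concatenate them; this is a countable-chain form of the Pontryagin/Hill criterion, and it is elementary here.

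The third point is the main obstacle. Write $s = t_{n+1}$ and $m = n+1$, so that $t_n = s^m$. We must understand how each monic irreducible $\pi(s^m)$, with $\pi \ne t_n$, factors in $F_0[s]$:
\[
\pi(s^m) = \prod_{j} \psi_j(s)^{e_j}.
\]
Distinct $\pi$ give disjoint sets of factors $\psi_j$: any root $\alpha$ of $\psi_j$ determines $\pi$ as the minimal polynomial of $\alpha^m$. Moreover $\pi(s^m)$ is separable in characteristic $0$, because $\pi(0) \ne 0$ and the derivative is $m s^{m-1}\pi'(s^m)$. So all $e_j = 1$.

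The map $\mathcal{P}_n \to \mathcal{P}_{n+1}$ therefore sends each basis element $\pi$ to a sum $\sum_j \psi_j$ of distinct basis elements of $\mathcal{P}_{n+1}$, and these sums have pairwise disjoint supports. Up to unit factors and monomials, every monic irreducible $\psi \ne s$ in $F_0[s]$ occurs exactly once, namely as a factor of the minimal polynomial of $\alpha^m$. We then choose, for each $\pi$, one distinguished factor $\psi_{\pi,1}$.

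The complement is spanned by all other basis elements of $\mathcal{P}_{n+1}$, that is, those $\psi$ that are not distinguished. The images of the $\pi$ together with the non-distinguished $\psi$ form a basis of $\mathcal{P}_{n+1}$: this is a block-unitriangular change of basis over $\Z$. This is where characteristic $0$ (separability, hence all $e_j = 1$) is essential. If some $e_j > 1$ occurred, the image would fail to be a summand.

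Finally, we conclude by assembling the three points: the union of the chain is free, with basis the union of a basis of $\mathcal{P}_1$ and bases of all the complements. This proves the lemma.
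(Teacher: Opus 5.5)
Your proof is correct and is essentially the paper's: the same chain of groups $\mathcal{P}_n$ with unique-factorization bases, the same square-freeness of $\pi(s^m)$ in characteristic zero, the same partition of the irreducibles of $F_0[s]$ into fibres over those of $F_0[t_n]$, and the same unitriangular basis change giving free complements, whose bases together with one of $\mathcal{P}_1$ form a basis of the union. The only difference is that you prove injectivity of $\mathcal{P}_n \to \mathcal{P}(F_0,t)$ using $[L_N:L_n]=N!/n!$ (via Eisenstein) rather than the paper's support comparison in $F_0[\Q]$, which works equally well; your side remark that any $e_j>1$ would destroy the summand property is overstated (only $\gcd_j e_j>1$ would), but nothing in the proof depends on it.
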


\begin{proof}
Set $z_n = t^{1/n!}$ and $P_n = F_0(z_n)^\times / (F_0^\times z_n^{\Z})$. Let $\mathcal{I}_n$ be the set of all monic irreducible polynomials in $F_0[z_n]$ other than $z_n$. Unique factorization in $F_0[z_n]$ gives a group isomorphism
\[
P_n \xrightarrow{\sim} \Z^{(\mathcal{I}_n)}, \qquad [f] \longmapsto (v_r(f))_{r \in \mathcal{I}_n}.
\]
To determine the transition map from level $n$ to $n+1$, set $d = n+1$ and $Y = z_{n+1}$, so $z_n = Y^d$. Fix $p(Z) \in \mathcal{I}_n$; then $p(0) \neq 0$, and since $\operatorname{char} F_0 = 0$, $p$ is separable with $\gcd(p, p') = 1$, so $A(Z)p(Z) + B(Z)p'(Z) = 1$ for some $A, B \in F_0[Z]$. Substituting $Z = Y^d$ gives $A(Y^d)p(Y^d) + B(Y^d)p'(Y^d) = 1$. Since $\frac{d}{dY}p(Y^d) = d Y^{d-1} p'(Y^d)$, $p(0) \neq 0$, and $d \neq 0$ in $F_0$, $p(Y^d)$ is coprime to its derivative and hence square-free in $F_0[Y]$. Moreover, distinct $p_1, p_2 \in \mathcal{I}_n$ yield coprime polynomials $p_1(Y^d)$ and $p_2(Y^d)$, and every $q(Y) \in \mathcal{I}_{n+1}$ divides $p(Y^d)$ for a unique $p \in \mathcal{I}_n$ (namely the minimal polynomial of $(Y + (q))^d$ over $F_0$).

Thus the finite nonempty sets $S_p := \{q \in \mathcal{I}_{n+1} : q(Y) \mid p(Y^d)\}$ ($p \in \mathcal{I}_n$) partition $\mathcal{I}_{n+1}$, and $p(Y^d) = \prod_{q \in S_p} q(Y)$. Under the basis isomorphism above, the transition map sends $p \in \mathcal{I}_n$ to $\sum_{q \in S_p} q$. Choosing one representative $q_p \in S_p$ for each $p$, the set $\left\{\sum_{q \in S_p} q\right\} \cup (S_p \setminus \{q_p\})$ is a basis of $\Z^{(S_p)}$, so $P_n \to P_{n+1}$ is split injective with free abelian cokernel.

Finally, the natural map $\iota_n : P_n \to \mathcal{P}(F_0, t)$ is injective: if $f = a/b \in F_0(z_n)^\times$ ($0 \neq a, b \in F_0[z_n]$) satisfies $f = c t^q$ ($c \in F_0^\times, q \in \Q$), then $a = c t^q b$ in $F_0[\Q]$, so $\supp(a) = q + \supp(b) \subseteq \frac{1}{n!}\Z$, forcing $q \in \frac{1}{n!}\Z$ and $t^q \in z_n^{\Z}$. Writing $P_{n+1} = P_n \oplus C_n$ with $C_n$ free abelian gives $\mathcal{P}(F_0, t) = P_1 \oplus \bigoplus_{n \ge 1} C_n$, which is free abelian.
\end{proof}

\begin{lemma}\label{lem:9.7-overgroup}
If $A \le W$ are torsion-free abelian groups, the natural homomorphism $\Delta_k(A) \to \Delta_k(W)$ induced by inclusion is injective.
\end{lemma}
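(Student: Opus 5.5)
We need to show that if $f \in k(A)^\times$ satisfies $f = cX^w$ in $k(W)$ with $c \in k^\times$ and $w \in W$, then $w \in A$; this forces the class of $f$ in $\Delta_k(A)$ to be trivial. First we record that the map is well defined. The inclusion $k[A] \hookrightarrow k[W]$ is an injective ring map between domains, so it extends to a field embedding $k(A) \hookrightarrow k(W)$. Under this embedding $k^\times X^A$ goes into $k^\times X^W$, so we get a homomorphism $\Delta_k(A) \to \Delta_k(W)$.

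For injectivity, suppose the class of $f = a/b$ (with $0 \neq a, b \in k[A]$) maps to the identity. Then $f = cX^w$ for some $c \in k^\times$ and $w \in W$, and clearing denominators gives the identity
\[
a = c\,X^w b \qquad \text{in } k[W].
\]
Since the monomials $X^v$ ($v \in W$) form a $k$-basis of $k[W]$, we may compare supports. We have $\supp(X^w b) = w + \supp(b)$, so $\supp(a) = w + \supp(b)$. Because $b \neq 0$, we can pick some $s \in \supp(b) \subseteq A$. Then $w + s \in \supp(a) \subseteq A$, and therefore $w = (w+s) - s \in A$. It follows that $X^w \in X^A$, so $f \in k^\times X^A$ and its class in $\Delta_k(A)$ is trivial.

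This is exactly the support argument already used at the end of the proof of Lemma~\ref{lem:9.6-puiseux}, and I do not expect any step to be a real obstacle. The only point that needs care is the compatibility of the embedding $k(A) \hookrightarrow k(W)$ with fractions: the equality $f = cX^w$ holds in $k(W)$, and it turns into the polynomial identity $a = cX^w b$ in $k[W]$ only after multiplying by $b$. That step is legitimate because $k[W]$ is a domain, which holds since $W$ is torsion-free abelian and hence orderable, so Lemma~\ref{lem:2.2}-style lowest-term arguments apply. Torsion-freeness of $A$ is used only to ensure that $k(A)$ exists.
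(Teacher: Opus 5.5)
Your proposal is correct and is essentially the paper's own proof. Both clear the denominator to get $a = cX^w b$ in $k[W]$, compare supports to obtain $\supp(a) = w + \supp(b)$, and use a single element of $\supp(b) \subseteq A$ to conclude $w \in A$.
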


\begin{proof}
If $f = a/b \in k(A)^\times$ ($0 \neq a, b \in k[A]$) equals $cX^w$ in $k(W)^\times$ ($c \in k^\times, w \in W$), then $a = cX^w b$ in $k[W]$, so $\supp(a) = w + \supp(b)$. Picking any $g \in \supp(b) \subseteq A$ gives $w + g \in \supp(a) \subseteq A$, whence $w \in A$ and $f \in k^\times X^A$.
\end{proof}

\begin{proposition}\label{prop:9.8-divisible}
Let $k$ be a field of characteristic zero and $W$ a $\Q$-vector space. Then $\Delta_k(W)$ is a free abelian group.
\end{proposition}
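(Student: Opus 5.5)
The plan is a transfinite filtration argument, modeled on Lemma \ref{lem:9.6-puiseux} but run over a basis of $W$ and reduced one variable at a time.

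\emph{Setup.} Choose a $\Q$-basis $(w_i)_{i<\kappa}$ of $W$, indexed by an ordinal $\kappa$. For $\alpha\le\kappa$ let $W_\alpha$ be the $\Q$-span of $\{w_i : i<\alpha\}$, and set $\Delta_\alpha := \Delta_k(W_\alpha)$. By Lemma \ref{lem:9.7-overgroup}, every inclusion $\Delta_\alpha\to\Delta_\beta$ with $\alpha\le\beta$ is injective. At limit ordinals $\lambda$ we have $k(W_\lambda)=\bigcup_{\alpha<\lambda}k(W_\alpha)$, so $\Delta_\lambda=\bigcup_{\alpha<\lambda}\Delta_\alpha$. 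The chain is therefore continuous.

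By the standard criterion (a continuous chain starting at $0$, each of whose successive quotients $\Delta_{\alpha+1}/\Delta_\alpha$ is free, has free union), it suffices to show that each $\Delta_{\alpha+1}/\Delta_\alpha$ is free. Indeed, freeness of each quotient makes every step split, $\Delta_{\alpha+1}=\Delta_\alpha\oplus C_\alpha$, and then $\Delta_\kappa=\bigoplus_\alpha C_\alpha$.

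\emph{Successor step.} Put $F_0 := k(W_\alpha)$ and $t := X^{w_\alpha}$. Then $W_{\alpha+1}=W_\alpha\oplus\Q w_\alpha$, and $k(W_{\alpha+1})=F_0(t^{\Q})$, with the compatible roots $t^{1/n!}=X^{w_\alpha/n!}$. This identification uses that $k[W_{\alpha+1}]$ is the union of the Laurent polynomial rings $k[W_\alpha][X^{\pm w_\alpha/n!}]$, whose fraction fields are $F_0(t^{1/n!})$. Inside $F_0(t^\Q)^\times$ we have the subgroups
\[
k^\times X^{W_{\alpha+1}}=k^\times X^{W_\alpha}\,t^{\Q}\ \le\ F_0^\times t^{\Q}.
\]
This gives a short exact sequence
\[
0\to F_0^\times t^{\Q}/(k^\times X^{W_\alpha}t^{\Q}) \to \Delta_{\alpha+1}\to \mathcal P(F_0,t)\to 0 .
\]
The left term is isomorphic to $F_0^\times/\bigl(F_0^\times\cap k^\times X^{W_\alpha}t^\Q\bigr)$. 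By a support argument as in Lemma \ref{lem:9.7-overgroup}, the intersection $F_0^\times\cap k^\times X^{W_\alpha}t^{\Q}$ equals $k^\times X^{W_\alpha}$: an element $ct^q$ with $q\neq0$ never lies in $F_0$, by comparing $w_\alpha$-coordinates of supports. So the left term is $\Delta_\alpha$, embedded exactly as the inclusion. Hence
\[
\Delta_{\alpha+1}/\Delta_\alpha\cong\mathcal P(F_0,t),
\]
which is free by Lemma \ref{lem:9.6-puiseux}, since $\operatorname{char}F_0=0$ and $t$ is transcendental over $F_0$.

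\emph{Main obstacle.} The step needing the most care is the identification $k(W_{\alpha+1})=F_0(t^{\Q})$, with $t^{\Q}$ realized by the actual monomials $X^{q w_\alpha}$. This requires checking that the abstractly chosen compatible roots in Lemma \ref{lem:9.6-puiseux} can be taken to be these monomials. It also requires checking that the quotient in Lemma \ref{lem:9.6-puiseux} is computed with respect to exactly the subgroup $F_0^\times t^\Q$ appearing above. Both reduce to the fact that $k[W_{\alpha+1}]\cong k[W_\alpha]\otimes_k k[\Q w_\alpha]$ is a domain, with fraction field the directed union described above.
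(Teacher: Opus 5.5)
Your proposal is correct and follows the paper's route. Both use a well-ordered $\Q$-basis, the continuous chain $\Delta_k(W_\alpha)$ made injective by Lemma \ref{lem:9.7-overgroup}, and successive quotients identified with $\mathcal{P}(k(W_\alpha), X^{w_\alpha})$, which is free by Lemma \ref{lem:9.6-puiseux}. Your short exact sequence and the computation $F_0^\times\cap k^\times X^{W_\alpha}t^{\Q}=k^\times X^{W_\alpha}$ supply the justification for the successor-step isomorphism $D_{\alpha+1}/D_\alpha\cong\mathcal{P}(F_\alpha,t_\alpha)$, which the paper states without proof.
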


\begin{proof}
Choose a well-ordered $\Q$-basis $(e_\alpha)_{\alpha < \kappa}$ of $W$, and set $W_\alpha = \bigoplus_{\beta < \alpha} \Q e_\beta$ and $D_\alpha = \Delta_k(W_\alpha)$. By Lemma \ref{lem:9.7-overgroup}, $D_\alpha \hookrightarrow D_\beta$ for $\alpha \le \beta$. At each successor stage $\alpha + 1$, setting $F_\alpha = k(W_\alpha)$ and $t_\alpha = X^{e_\alpha}$, we have $k(W_{\alpha+1}) = F_\alpha(t_\alpha^{\Q})$ and
\[
D_{\alpha+1}/D_\alpha \cong F_\alpha(t_\alpha^{\Q})^\times / (F_\alpha^\times t_\alpha^{\Q}) = \mathcal{P}(F_\alpha, t_\alpha),
\]
which is free abelian by Lemma \ref{lem:9.6-puiseux}; hence $D_{\alpha+1} = D_\alpha \oplus C_\alpha$ with $C_\alpha$ free abelian. At limit ordinals $\lambda$, $D_\lambda = \bigcup_{\alpha < \lambda} D_\alpha$, so transfinite induction yields $\Delta_k(W) = \bigoplus_{\alpha < \kappa} C_\alpha$.
\end{proof}

\begin{theorem}\label{thm:9.9}
Let $k$ be a field of characteristic zero and $A$ an arbitrary torsion-free abelian group. Then the quotient group
\[
\Delta_k(A) = k(A)^\times / (k^\times X^A)
\]
is a free abelian group.
\end{theorem}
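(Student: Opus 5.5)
The plan is to embed $A$ into its divisible hull $W := A \otimes_{\Z} \Q$, a $\Q$-vector space containing $A$ as a subgroup because $A$ is torsion-free. By Lemma \ref{lem:9.7-overgroup}, the natural map $\Delta_k(A) \to \Delta_k(W)$ is injective. By Proposition \ref{prop:9.8-divisible}, $\Delta_k(W)$ is free abelian. A subgroup of a free abelian group is free abelian; this is the Dedekind theorem, valid in arbitrary rank by the standard transfinite argument using a well-ordered basis. Hence $\Delta_k(A)$ is free abelian.

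In order, the steps are as follows.

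\begin{enumerate}
\item[(1)] Verify that $k[A] \subseteq k[W]$ is a subring, so that $k(A) \subseteq k(W)$ as fields. Then the inclusion sends $k^\times X^A$ into $k^\times X^W$ and induces a homomorphism $\Delta_k(A) \to \Delta_k(W)$.
\item[(2)] Invoke Lemma \ref{lem:9.7-overgroup} for the injectivity of this map.
\item[(3)] Invoke Proposition \ref{prop:9.8-divisible}, with the $\Q$-vector space $W$, to get that $\Delta_k(W)$ is free.
\item[(4)] Conclude by the subgroup theorem for free abelian groups of arbitrary rank.
\end{enumerate}

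The only point needing any care is step (4) in infinite rank. I would either cite it as classical, or sketch it. For the sketch, take a well-ordered basis $(e_\alpha)$ of $\Delta_k(W)$ and filtrations $S_\alpha = \Delta_k(A) \cap \langle e_\beta : \beta < \alpha \rangle$. Each quotient $S_{\alpha+1}/S_\alpha$ embeds into $\Z$ via the $e_\alpha$-coordinate, so it is $0$ or infinite cyclic, and it splits off. Transfinite assembly of these splittings then yields a basis of $\Delta_k(A)$. Beyond this, the argument is a direct combination of the two preceding results, so I expect no genuine obstacle.
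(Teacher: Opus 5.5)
Your proposal is correct and follows the paper's proof exactly: embed $A$ in its divisible hull $W = \Q\otimes_{\Z} A$, use Lemma~\ref{lem:9.7-overgroup} for injectivity of $\Delta_k(A)\to\Delta_k(W)$ and Proposition~\ref{prop:9.8-divisible} for freeness of $\Delta_k(W)$, then conclude because subgroups of free abelian groups are free. Your transfinite sketch of that subgroup theorem in arbitrary rank is correct, and it spells out a step the paper leaves implicit.
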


\begin{proof}
Embed $A$ into its divisible hull $W = \Q \otimes_{\Z} A$. By Lemma \ref{lem:9.7-overgroup} and Proposition \ref{prop:9.8-divisible}, $\Delta_k(A)$ embeds as a subgroup of the free abelian group $\Delta_k(W)$ and is therefore free abelian.
\end{proof}

Returning to $F = \Q(U) = \Q(V)$, Theorem \ref{thm:9.9} and Lemma \ref{lem:9.3} ($F^\times = \Q^\times \times K$) give
\[
\frac{F^\times}{\Q^\times U} = \frac{\Q^\times K}{\Q^\times U} \cong \frac{K}{U}, \qquad \frac{F^\times}{\Q^\times V} \cong \frac{K}{V},
\]
so $K/U$ and $K/V$ are both free abelian groups.

\begin{lemma}\label{lem:9.6}
There exist free abelian subgroups $C_G, C_H \le K$ such that
\[
K = U \times C_G = V \times C_H \qquad \text{and} \qquad E = G \times C_G = H \times C_H.
\]
In particular, $E/G$ and $E/H$ are free abelian groups.
\end{lemma}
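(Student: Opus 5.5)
Since $K/U$ and $K/V$ are free abelian (shown just above via Theorem \ref{thm:9.9}), the plan is to split the corresponding short exact sequences and then show that the complements do the job inside $E$.

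\emph{Splitting $K$.} The sequence $0 \to U \to K \to K/U \to 0$ has free abelian, hence projective, quotient, so it splits. Choosing a section yields $C_G \le K$ with $K = U \times C_G$ and $C_G \cong K/U$ free abelian. In the same way we obtain $C_H \le K$ with $K = V \times C_H$.

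\emph{Decomposing $E$.} Everything here is multiplicative inside $D^\times$. By Lemma \ref{lem:9.3}, $E = KG = UC_G G = GC_G$, since $U = Z(G) \le G$. Next, $C_G \le K = Z(E)$ by Lemma \ref{lem:9.4}, so $C_G$ is central in $E$ and therefore normal. This gives a central product, and it remains only to show $G \cap C_G = 1$. Using $C_G \le K$, we have $G \cap C_G \le G \cap K = U$ by the definition of $U$. Hence $G \cap C_G \le U \cap C_G = 1$. Now $G$ and $C_G$ commute elementwise and intersect trivially, so $E = G \times C_G$ is an internal direct product. The identical argument with $V = Z(H) = K \cap H$ (Lemma \ref{lem:9.5}) and $E = KH$ gives $E = H \times C_H$.

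\emph{Consequence.} Because of the direct product decompositions, $E/G \cong C_G$ and $E/H \cong C_H$, and both are free abelian.

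\emph{Main point to check.} The only step needing care is the identification $G \cap K = U$, i.e.\ that no element of $G$ outside $Z(G)$ lies in $K$. This holds by the definition $U = K \cap G$ together with Lemma \ref{lem:9.5}, which shows $U$ is exactly $Z(G)$. Everything else is formal: projectivity of free abelian groups, and the centrality of $K$ in $E$.
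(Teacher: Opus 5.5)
Your proposal is correct and follows essentially the same route as the paper. The paper likewise splits $1\to U\to K\to K/U\to 1$ using the freeness of $K/U$, then combines $E=KG$, $U\le G$, $G\cap C_G\le G\cap K=U$, $U\cap C_G=1$ and $C_G\le K=Z(E)$ to obtain $E=G\times C_G$ (and symmetrically $E=H\times C_H$).
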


\begin{proof}
Since $K/U$ is free abelian and hence projective as a $\Z$-module, the short exact sequence
\[
0 \longrightarrow U \longrightarrow K \longrightarrow K/U \longrightarrow 0
\]
splits. Choosing a section image $C_G \le K$ gives $K = U \times C_G$, and similarly $K = V \times C_H$. Since $E = KG$, $K = UC_G$, and $U \le G$, we have $E = GC_G$. Moreover, $G \cap C_G \le G \cap K = U$ and $U \cap C_G = 1$, so $G \cap C_G = 1$. Since $C_G \le K = Z(E)$, $E = GC_G = G \times C_G$, and similarly $E = H \times C_H$. Thus $E/G \cong C_G$ and $E/H \cong C_H$ are free abelian.
\end{proof}

\section{Weight-Preserving Isomorphism Between the Quotients \texorpdfstring{$G/L$}{G/L} and \texorpdfstring{$H/L$}{H/L}}

In this section, we set $L = G \cap H$ and study the relationship between the free abelian quotients $P_G = G/L$ and $P_H = H/L$. By quotienting out the isolator $I = \sqrt[E]{E'} \le L$ of the derived subgroup and extending scalars to $\Qbar$, we embed the group algebras of $(G/I) \times K$ and $(H/I) \times K$ into a common commutative auxiliary field $\Omega$, prove that $\Omega = k_{\mathrm{aux}}(P_G) = k_{\mathrm{aux}}(P_H)$ forms a weighted rational function field isomorphism over the weight group $B = E/(KL)$, and apply Theorem \ref{thm:6.1} to obtain a weight-preserving group isomorphism $\beta : G/L \xrightarrow{\sim} H/L$.

Define
\[
L := G \cap H, \qquad P_G := G/L, \qquad P_H := H/L, \qquad B := E/KL.
\]

\begin{lemma}\label{lem:10.1}
We have $G, H, L \triangleleft E$, and
\[
E' = G' = H' \le L.
\]
Moreover, $P_G$ and $P_H$ are free abelian groups, and the natural maps
\[
\rho_G : P_G \twoheadrightarrow B, \qquad \rho_H : P_H \twoheadrightarrow B
\]
are surjective.
\end{lemma}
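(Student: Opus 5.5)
The plan is to read everything off the central direct product decompositions of Lemma \ref{lem:9.6}, namely $E = G \times C_G = H \times C_H$ with $C_G, C_H \le K = Z(E)$ (Lemma \ref{lem:9.4}), together with $E = KG = KH$ from Lemma \ref{lem:9.3}. No new division-ring input should be needed; the statement is purely group-theoretic once these decompositions are available.

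\emph{Normality and derived subgroups.} Every $e \in E$ can be written $e = gc$ with $g \in G$ and $c \in C_G$ central. Hence conjugation by $e$ agrees with conjugation by $g$, which preserves $G$, so $G \triangleleft E$. The same argument with $E = H \times C_H$ gives $H \triangleleft E$, and then $L = G \cap H \triangleleft E$ as an intersection of normal subgroups. For commutators, centrality of $C_G$ gives
\[
[g c, g' c'] = [g, g'] \qquad (g, g' \in G,\ c, c' \in C_G),
\]
so $E' = G'$; likewise $E' = H'$. Since $G' \le G$ and $H' \le H$, we get $E' = G' = H' \le G \cap H = L$.

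\emph{Freeness of $P_G$ and $P_H$.} By the second isomorphism theorem, $P_G = G/(G \cap H) \cong GH/H$, and $GH/H$ is a subgroup of $E/H \cong C_H$. Here $E/H$ is abelian because $E' \le H$, and it is free abelian by Lemma \ref{lem:9.6}. A subgroup of a free abelian group of arbitrary rank is free abelian; this is the standard Dedekind-type theorem, proved by transfinite induction on a well-ordered basis. Hence $P_G$ is free abelian. Symmetrically, $P_H \cong GH/G \le E/G \cong C_G$ is free abelian. The only nonformal ingredient in the whole lemma is this subgroup theorem in arbitrary rank; it is classical, so I expect no real obstacle.

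\emph{The weight maps.} First, $KL$ is a normal subgroup of $E$, since $K$ is central and $L \triangleleft E$. Moreover $E/KL$ is abelian because $E' \le L \le KL$. Define $\rho_G(gL) = gKL$; this is well defined and a homomorphism since $L \le KL$, and similarly for $\rho_H$. For surjectivity, take any $e \in E$. By Lemma \ref{lem:9.3} we may write $e = kg$ with $k \in K$ and $g \in G$, so $eKL = gKL = \rho_G(gL)$. Using $E = KH$ instead gives surjectivity of $\rho_H$.
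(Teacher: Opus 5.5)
Your proposal is correct and follows essentially the same route as the paper. Centrality of the complementary factor gives normality and $E' = G' = H' \le L$; the embedding $G/L \hookrightarrow E/H \cong C_H$, together with the fact that subgroups of free abelian groups are free, gives freeness of $P_G$ and $P_H$; and $E = KG = KH$ gives surjectivity of $\rho_G$ and $\rho_H$.
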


\begin{proof}
Since $E = KG$ and $K \le Z(E)$, $G \triangleleft E$ and similarly $H \triangleleft E$, so $L = G \cap H \triangleleft E$. Central factors do not contribute to commutators, so $E' = [KG, KG] = G'$ and similarly $E' = H'$, whence $E' = G' = H' \le G \cap H = L$. The natural map $G/L \to E/H$, $gL \mapsto gH$, has kernel $G \cap H = L$ and thus embeds $P_G$ into the free abelian group $E/H$ (Lemma \ref{lem:9.6}), so $P_G$ is free abelian, and similarly for $P_H$. Since $E' \le L \le KL$, $B = E/KL$ is abelian; for any $eKL \in B$, writing $e = kg$ ($k \in K, g \in G$) gives $eKL = gKL = \rho_G(gL)$, so $\rho_G$ is surjective, and similarly $\rho_H$ is surjective.
\end{proof}

Set
\[
I = \sqrt[E]{E'} = \{e \in E : \exists\, n \ge 1, \, e^n \in E'\}, \qquad A := E/I.
\]

\begin{lemma}\label{lem:10.2}
We have $E' \le I \le L$, and $A = E/I$ is a torsion-free abelian group.
\end{lemma}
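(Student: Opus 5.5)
The inclusion $E' \le I$ is immediate from the definition, since every $e \in E'$ satisfies $e^1 \in E'$. The remaining claims are that $I \le L$, that $I$ is a subgroup (so that $E/I$ makes sense), and that $E/I$ is torsion-free abelian. I will establish them in that order, with the subgroup property handled through the abelian quotient $E/E'$.

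\textbf{The isolator is a subgroup.} By Lemma \ref{lem:10.1}, $E' = G' = H' \le L \triangleleft E$, so $E/E'$ is an abelian group. The set $I/E'$ is exactly the torsion subgroup of $E/E'$. In an abelian group the torsion elements form a subgroup, so $I$ is a normal subgroup of $E$ containing $E'$. It follows that $A = E/I \cong (E/E')/\Tor(E/E')$, which is abelian (as a quotient of $E/E'$) and torsion-free (as the quotient of an abelian group by its torsion subgroup).

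\textbf{The inclusion $I \le L$.} This is the only substantive step. Take $e \in I$ with $e^n \in E' \le L$.

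First, $e \in G$. The image of $e$ in $E/G$ satisfies $(eG)^n = e^nG = G$, because $e^n \in L \le G$. By Lemma \ref{lem:9.6}, $E/G \cong C_G$ is free abelian, hence torsion-free, so $eG = G$ and $e \in G$.

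Second, $e \in H$. The same argument applies with $E/H \cong C_H$ in place of $E/G$.

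Therefore $e \in G \cap H = L$, which proves $I \le L$. Concretely, writing $e = kg$ with $k \in C_G$ and $g \in G$ (possible since $E = G \times C_G$), the relation $e^n = k^n g^n \in G$ forces $k^n = 1$, hence $k = 1$ because $C_G$ is torsion-free.

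There is no real obstacle in this lemma. The one point that needs care is noticing that the torsion-freeness of $E/G$ and $E/H$ from Lemma \ref{lem:9.6} is exactly what pushes the isolator down into $L$.
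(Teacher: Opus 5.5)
Your proof is correct and follows the paper's argument: $E/E'$ modulo its torsion subgroup gives $A$, and the torsion-freeness of $E/G \cong C_G$ and $E/H \cong C_H$ from Lemma \ref{lem:9.6} forces every $e \in I$ into $G \cap H = L$. Your explicit check that $I/E' = \Tor(E/E')$, so that $I$ is a normal subgroup, is a step the paper leaves implicit.
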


\begin{proof}
Clearly $E' \le I$, and $A = E/I$ is the quotient of the abelian group $E/E'$ by its torsion subgroup and is therefore torsion-free abelian. If $e \in I$, then $e^n \in E' \le G$ for some $n \ge 1$, so $(eG)^n = G$ in the torsion-free group $E/G$, whence $e \in G$; similarly $e \in H$, so $e \in G \cap H = L$ and $I \le L$.
\end{proof}

We write $A = E/I$ additively and denote the image of $e \in E$ in $A$ by $\bar{e} := eI \in A$. Since $I \le L \le KL$, the natural surjection $E \twoheadrightarrow B$ factors through $A$:
\[
\begin{aligned}
E &\longrightarrow A \longrightarrow B, \\
e &\longmapsto eI \longmapsto eKL.
\end{aligned}
\]

Set
\[
\widehat{F} := \Frac(\Qbar \otimes_{\Q} F).
\]

\begin{lemma}\label{lem:10.3}
$\Qbar \otimes_{\Q} F$ is a field (in particular, a domain), and $\widehat{F} = \Qbar(U) = \Qbar(V)$, where $\Qbar(U) = \Frac \Qbar[U]$. Moreover, distinct elements of $U$ (respectively $V$) remain $\Qbar$-linearly independent in $\widehat{F}$.
\end{lemma}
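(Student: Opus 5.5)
Since $F = \Q(U) = \Frac \Q[U]$ with $U = Z(G)$ a torsion-free abelian group (Lemma \ref{lem:9.5}), the plan is to reduce everything to the behaviour of $\Qbar \otimes_{\Q} -$ on the group algebra $\Q[U]$ and its fraction field. First I will note that $\Qbar \otimes_{\Q} \Q[U] \cong \Qbar[U]$ canonically, and that $\Qbar[U]$ is a domain because $U$ is torsion-free abelian, hence bi-orderable (the lowest-term argument of Lemma \ref{lem:2.2} applies verbatim to a translation-invariant ordering on $U$). Since $\Qbar$ is flat over $\Q$, the localization commutes with the tensor product: $\Qbar \otimes_{\Q} F = S^{-1}\Qbar[U]$ with $S = \Q[U]\setminus\{0\}$, which is a subring of $\Qbar(U) = \Frac \Qbar[U]$ and in particular a domain. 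Flatness also shows that $\Qbar[U] \to \Qbar \otimes_{\Q} F$ is injective, so distinct elements of $U$, being a $\Qbar$-basis of $\Qbar[U]$, stay $\Qbar$-linearly independent in $\widehat{F}$.

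To show that $\Qbar \otimes_{\Q} F$ is actually a field, I will use that $\Qbar/\Q$ is algebraic: every element of $\Qbar \otimes_{\Q} F$ lies in $L_0 \otimes_{\Q} F$ for some finite extension $L_0/\Q$. Now $L_0 \otimes_{\Q} F$ is a finite-dimensional $F$-algebra, and we already know it is a domain, since it embeds into the domain $\Qbar \otimes_{\Q} F$ by flatness of $F$ over $\Q$. A finite-dimensional domain over a field is a field. Hence every nonzero element is invertible, so $\Qbar \otimes_{\Q} F$ is a field and $\widehat{F} = \Qbar \otimes_{\Q} F$. It contains $\Qbar[U]$ and sits inside $\Qbar(U)$, so it equals $\Qbar(U)$.

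The symmetric statement for $V$ follows because $F = \Q(V)$ as well (Lemma \ref{lem:9.5}). The same argument gives $\Qbar \otimes_{\Q} F = \Qbar(V)$, where the identification comes from the same abstract ring $\Qbar \otimes_{\Q} F$. So $\Qbar(U) = \Qbar(V) = \widehat{F}$ as subfields of one field, and the linear independence for $V$ follows identically.

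The main obstacle is the domain property for $\Qbar \otimes_{\Q} F$ when $F$ is not finitely generated. The point needing care is that $\Q$ is algebraically closed in $F = \Q(U)$, which is guaranteed precisely by passing through $\Qbar[U]$ rather than arguing abstractly. I will therefore rely on the chain $\Qbar \otimes \Q[U] = \Qbar[U]$ (a domain) plus flat localization, which avoids any appeal to regularity of field extensions.
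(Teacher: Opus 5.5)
Your proof is correct, but it follows a different route from the paper's.

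The paper reduces to finitely generated subgroups $U_0 \cong \Z^r$ of $U$. It identifies $\Q(U_0)$ with the purely transcendental field $\Q(t_1,\dots,t_r)$ and asserts that $L \otimes_{\Q} \Q(t_1,\dots,t_r) \cong L(t_1,\dots,t_r)$ is a field for each finite $L/\Q$. It then passes twice to directed unions, over $L \subseteq \Qbar$ and over $U_0 \le U$, using that tensor products commute with directed colimits.

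You instead handle all of $U$ at once. The domain property comes from the orderability of the torsion-free abelian group $U$, which makes $\Qbar[U]$ a domain, together with $\Qbar \otimes_{\Q} F = S^{-1}\Qbar[U] \subseteq \Qbar(U)$ for $S = \Q[U]\setminus\{0\}$. The field property comes from noting that each $L_0 \otimes_{\Q} F$ is a finite-dimensional domain over $F$.

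What your route buys is that it avoids the colimit bookkeeping and the identification with rational function fields. It also supplies a justification the paper leaves implicit: the paper's claim that $L \otimes_{\Q} \Q(t_1,\dots,t_r)$ is a field needs essentially your finite-dimensional-domain argument, with $F$ replaced by $\Q(t_1,\dots,t_r)$. The paper's route, in exchange, only ever uses the classical Laurent-polynomial case and needs no ordering on $U$.

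The final identification $\widehat{F} = \Qbar(U) = \Qbar(V)$ as subfields of the single field $\Qbar \otimes_{\Q} F$ is handled the same way in both, as is the linear independence of distinct elements of $U$ (respectively $V$).
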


\begin{proof}
For any finitely generated subgroup $U_0 \le U$, $U_0 \cong \Z^r$ is free abelian, so $\Q[U_0] \cong \Q[t_1^{\pm 1}, \dots, t_r^{\pm 1}]$ and $F_0 := \Frac \Q[U_0] \cong \Q(t_1, \dots, t_r)$ is purely transcendental over $\Q$. For every finite extension $L/\Q$, $L \otimes_{\Q} F_0 \cong L(t_1, \dots, t_r)$ is a field; taking the directed union over all finite subextensions $L \subseteq \Qbar$ shows that $\Qbar \otimes_{\Q} \Q(U_0) \cong \Qbar(U_0)$ is a field. Since $F = \Q(U) = \bigcup_{U_0} \Q(U_0)$ is the directed union over finitely generated subgroups $U_0 \le U$ and tensor products commute with directed colimits while $\Qbar$ is flat over $\Q$, we obtain
\[
\Qbar \otimes_{\Q} F = \bigcup_{U_0} \Qbar(U_0) = \Qbar(U),
\]
which is a field, and similarly $\widehat{F} = \Qbar(V)$. Given distinct $u_1, \dots, u_s \in U$, choosing a finitely generated subgroup $U_0 \le U$ containing them shows that $u_1, \dots, u_s$ are part of the $\Qbar$-basis of $\Qbar[U_0] \subseteq \Qbar(U_0) \subseteq \widehat{F}$, and hence are $\Qbar$-linearly independent in $\widehat{F}$ (and similarly for $V$).
\end{proof}

Set $\Omega := \Frac \widehat{F}[A]$. For $(a, k) \in A \times K$ (with group operation $(a, k)(a', k') = (a + a', kk')$), define
\[
\Theta(a, k) := k X^{a - \bar{k}} \in \Omega^\times.
\]

\begin{lemma}\label{lem:10.4}
$\Theta : A \times K \longrightarrow \Omega^\times$ is an injective group homomorphism.
\end{lemma}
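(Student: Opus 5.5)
The plan is to verify the two claims directly from the definitions: the homomorphism property follows from commutativity of $\Omega$ and additivity of $k \mapsto \bar{k}$, and injectivity from the fact that monomials $X^b$ with $b \neq 0$ never lie in the coefficient field $\widehat{F}$.

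\emph{Setup.} First I check that $\Theta$ makes sense. By Lemma \ref{lem:10.3}, $\widehat{F}$ is a field. Since $A$ is torsion-free abelian (Lemma \ref{lem:10.2}), it admits a translation-invariant total order, so the lowest-term argument of Lemma \ref{lem:2.2} shows that $\widehat{F}[A]$ is a commutative domain. Hence $\Omega = \Frac \widehat{F}[A]$ is a field.

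The map $F \to \Qbar \otimes_{\Q} F$, $f \mapsto 1 \otimes f$, is injective because $\Qbar$ is free (hence faithfully flat) over $\Q$. So $K \le F^\times$ embeds in $\widehat{F}^\times \subseteq \Omega^\times$, and each $k \in K$ is a nonzero element of $\Omega$. Since $K \le E$, the class $\bar{k} = kI \in A$ is defined. Therefore $\Theta(a,k) = kX^{a-\bar{k}}$ is a well-defined element of $\Omega^\times$.

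\emph{Homomorphism.} The map $E \to A$, $e \mapsto \bar{e}$, is a homomorphism, so $\overline{kk'} = \bar{k} + \bar{k}'$. Since $\Omega$ is commutative and $X^{b}X^{b'} = X^{b+b'}$, I compute
\[
\Theta(a,k)\,\Theta(a',k') = kk'\,X^{(a+a')-(\bar{k}+\bar{k}')} = \Theta(a+a',kk').
\]
Together with $\Theta(0,1) = X^0 = 1$, this shows that $\Theta$ is a group homomorphism.

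\emph{Injectivity.} Suppose $\Theta(a,k) = 1$. Then, in $\widehat{F}[A] \subseteq \Omega$,
\[
X^{a-\bar{k}} = k^{-1} \in \widehat{F}^\times = \widehat{F}^\times X^0 .
\]
The monomials $X^b$ ($b \in A$) form an $\widehat{F}$-basis of $\widehat{F}[A]$, so comparing supports gives $a - \bar{k} = 0$, and then comparing coefficients gives $k^{-1} = 1$ in $\widehat{F}$.

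By the injectivity of $F \hookrightarrow \widehat{F}$ noted above, $k = 1$ already in $F^\times$, i.e.\ $k$ is the identity of $K$. Then $\bar{k} = 0$, so $a = \bar{k} = 0$. Thus $\ker \Theta$ is trivial.

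The only point needing care is the injectivity of $F \hookrightarrow \widehat{F}$, which ensures that equality $k = 1$ in the auxiliary field $\widehat{F}$ forces $k = 1$ in $K$; this is supplied by the flatness of $\Qbar$ over $\Q$ together with Lemma \ref{lem:10.3}. Everything else is routine bookkeeping with supports in the group algebra $\widehat{F}[A]$.
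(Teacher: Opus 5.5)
Your proof is correct and follows the paper's argument: the homomorphism property comes from $\overline{kk'}=\bar{k}+\bar{k}'$, and injectivity comes from the linear independence of the monomials $X^b$ over $\widehat{F}$. Your extra checks make explicit two points the paper leaves implicit: that $\widehat{F}[A]$ is a domain, so $\Omega$ is a field, and that $F\hookrightarrow \Qbar\otimes_{\Q}F\hookrightarrow\widehat{F}$ is injective, so $k=1$ in $\widehat{F}$ forces $k=1$ in $K$.
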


\begin{proof}
Since $\overline{kk'} = \bar{k} + \bar{k}'$, we have $\Theta(a, k)\Theta(a', k') = kk' X^{a + a' - \bar{k} - \bar{k}'} = \Theta(a + a', kk')$. If $\Theta(a, k) = k X^{a - \bar{k}} = 1$, linear independence of distinct monomials in $\widehat{F}[A]$ over $\widehat{F}$ gives $a - \bar{k} = 0$ and $k = 1$, whence $a = 0$.
\end{proof}

Define the three abelian subgroups of $A \times K$:
\[
\mathcal{A}_G = (G/I) \times K, \qquad \mathcal{A}_H = (H/I) \times K, \qquad \mathcal{A}_L = (L/I) \times K.
\]

\begin{lemma}\label{lem:10.5}
$\Theta$ induces an injective $\Qbar$-algebra homomorphism
\[
\Qbar[\mathcal{A}_G] \hookrightarrow \widehat{F}[A],
\]
as well as the corresponding embeddings for $\mathcal{A}_H$ and $\mathcal{A}_L$.
\end{lemma}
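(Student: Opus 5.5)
The approach is to define the map on the group basis and reduce injectivity to the $\Qbar$-linear independence of distinct elements of $U$ (resp.\ $V$, $U$ again for $L$) in $\widehat{F}$, which Lemma \ref{lem:10.3} provides. First, $\Theta(a,k) = kX^{a-\bar k}$ lies in $\widehat{F}[A]$: we have $K \le F^\times \subseteq \widehat{F}^\times$, and $\widehat{F}$ is central in the commutative ring $\widehat{F}[A]$. By Lemma \ref{lem:10.4}, $\Theta$ is a group homomorphism into the unit group of $\widehat{F}[A]$. The universal property of group algebras therefore extends $\Theta|_{\mathcal{A}_G}$ uniquely to a $\Qbar$-algebra homomorphism $\Qbar[\mathcal{A}_G] \to \widehat{F}[A]$, and likewise for $\mathcal{A}_H$ and $\mathcal{A}_L$. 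It remains to prove injectivity.

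A naive argument fails here. Distinct elements of $K$ need not be $\Q$-linearly independent in $F$; for instance $1$, $u$ and $1+u$ with $u \in U$ all lie in $K$. So injectivity cannot follow from independence of $K$ alone, and must use the monomial exponents $a - \bar k$.

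Take a finite relation $\sum_i \lambda_i \Theta(a_i,k_i) = 0$ with $\lambda_i \in \Qbar^\times$ and pairwise distinct $(a_i,k_i) \in \mathcal{A}_G$. Since the monomials $X^a$ are $\widehat{F}$-linearly independent in $\widehat{F}[A]$, I group the terms by exponent $a_i - \bar k_i \in A$. Each exponent class then gives a relation $\sum_{i \in S} \lambda_i k_i = 0$ inside $\widehat{F}$.

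The key step is to analyse a single class. If $a_i - \bar k_i = a_j - \bar k_j$, then $\overline{k_i k_j^{-1}} = a_i - a_j \in G/I$. Hence $k_ik_j^{-1} \in GI = G$, using $I \le L \le G$ from Lemma \ref{lem:10.2}. Therefore $k_ik_j^{-1} \in K \cap G = U$ by Lemma \ref{lem:9.5}. So within one class every $k_i$ lies in a single coset $k_0U$, say $k_i = k_0 u_i$.

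Moreover, within a class $a_i$ is determined by $k_i$, via $a_i = (\text{exponent}) + \bar k_i$. Distinct pairs $(a_i,k_i)$ in the class therefore have distinct $k_i$, hence distinct $u_i \in U$. Cancelling the unit $k_0$ gives $\sum_{i\in S}\lambda_i u_i = 0$ in $\widehat{F}$. Lemma \ref{lem:10.3} says distinct elements of $U$ are $\Qbar$-linearly independent in $\widehat{F}$, which forces all $\lambda_i = 0$. This proves injectivity for $\mathcal{A}_G$.

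The case $\mathcal{A}_H$ is identical with $H$, $V = K\cap H$ and $I \le L \le H$ in place of $G$, $U$. For $\mathcal{A}_L$ the argument gives $k_ik_j^{-1} \in K \cap L \le U$, so the same conclusion holds.

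The main obstacle is exactly this coset reduction, namely recognising that the exponent grading collapses the $K$-coefficients to a single $U$-coset. Once that is in place, everything rests on Lemma \ref{lem:10.3}, which is already established.
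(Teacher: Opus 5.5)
Your proof is correct and follows essentially the same route as the paper. You group the relation by the exponent $a_i - \bar{k}_i$, show that each exponent class collapses to a single coset $k_0U$ with pairwise distinct $u_i = k_ik_0^{-1} \in K \cap G$, and conclude by the $\Qbar$-linear independence of $U$ from Lemma \ref{lem:10.3}; the only differences are that you also spell out the universal-property construction of the algebra map and treat $\mathcal{A}_L$ directly via $K \cap L \le U$ rather than as a subalgebra of $\Qbar[\mathcal{A}_G]$.
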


\begin{proof}
We prove the statement for $G$. Suppose there is a relation
\[
\sum_{j=1}^s c_j \Theta(a_j, k_j) = 0, \qquad c_j \in \Qbar,
\]
with $(a_j, k_j) \in \mathcal{A}_G$ pairwise distinct. Setting $e_j = a_j - \bar{k}_j \in A$, we have
\[
\sum_{j=1}^s c_j k_j X^{e_j} = 0.
\]
Since distinct monomials $X^e$ in $\widehat{F}[A]$ are linearly independent over $\widehat{F}$, for each exponent $e$ that appears we must have
\[
\sum_{j : e_j = e} c_j k_j = 0 \quad \text{in } \widehat{F}.
\]
Fix one index $j_0$ with $e_{j_0} = e$ and write $(a_0, k_0) = (a_{j_0}, k_{j_0})$. For every $j$ with $e_j = e$, we have
\[
a_j - \bar{k}_j = a_0 - \bar{k}_0 \implies \overline{k_j k_0^{-1}} = a_j - a_0 \in G/I.
\]
Since $I \le G$ and $k_j k_0^{-1} \in K$, this implies $u_j := k_j k_0^{-1} \in K \cap G = U$. Moreover, these elements $u_j$ are pairwise distinct: if $k_j k_0^{-1} = k_{j'} k_0^{-1}$, then $k_j = k_{j'}$, and $e_j = e_{j'}$ then gives $a_j = a_{j'}$, contradicting the assumption that the pairs $(a_j, k_j)$ are distinct. Multiplying $\sum_{j : e_j = e} c_j k_j = 0$ by $k_0^{-1}$ yields
\[
\sum_{j : e_j = e} c_j u_j = 0,
\]
and by Lemma \ref{lem:10.3}, $c_j = 0$ for all $j$ with $e_j = e$. Since this holds for every $e$, all $c_j = 0$. Replacing $U$ by $V$ proves the claim for $\mathcal{A}_H$, and $\mathcal{A}_L \le \mathcal{A}_G$ follows immediately.
\end{proof}

\begin{lemma}\label{lem:10.6}
We have
\[
\Qbar(\Theta(\mathcal{A}_G)) = \Omega = \Qbar(\Theta(\mathcal{A}_H)).
\]
\end{lemma}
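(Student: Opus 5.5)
The plan is to prove both inclusions directly from the definition of $\Theta$; the argument is symmetric in $G$ and $H$, so I only treat $\mathcal{A}_G$. Write $\Omega_G := \Qbar(\Theta(\mathcal{A}_G))$ for the subfield of $\Omega$ generated over $\Qbar$ by the image of $\Theta$ on $\mathcal{A}_G$.

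\emph{First inclusion.} The inclusion $\Omega_G \subseteq \Omega$ is immediate. By Lemma~\ref{lem:10.4}, every $\Theta(a,k)$ lies in $\Omega^\times$, and $\Qbar \subseteq \widehat{F} \subseteq \Omega$.

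\emph{Reverse inclusion.} Since $\Omega = \Frac \widehat{F}[A]$, it suffices to show that $\Omega_G$ contains $\widehat{F}$ and every monomial $X^a$ with $a \in A$. I would proceed in three steps.

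\begin{enumerate}
\item[(1)] For $a \in G/I$, we have $X^a = \Theta(a,1) \in \Omega_G$.
\item[(2)] For $u \in U = K \cap G$, the element $\bar{u}$ lies in $G/I$, since $I \le G$. Hence
\[
u = \Theta(0,u)\,\Theta(\bar{u},1) \in \Omega_G .
\]
By Lemma~\ref{lem:10.3}, $\widehat{F} = \Qbar(U)$, so $\widehat{F} \subseteq \Omega_G$. In particular $K \subseteq F^\times \subseteq \widehat{F} \subseteq \Omega_G$.
\item[(3)] For $k \in K$, we get $X^{\bar{k}} = k\,\Theta(0,k)^{-1} \in \Omega_G$. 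An arbitrary $a \in A$ has the form $a = \bar{e}$ with $e = kg$, where $k \in K$ and $g \in G$, because $E = KG$ by Lemma~\ref{lem:9.3}. Then $a = \bar{k} + \bar{g}$, so
\[
X^a = X^{\bar{k}}\,\Theta(\bar{g},1) \in \Omega_G .
\]
\end{enumerate}

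Together these give $\Omega \subseteq \Omega_G$. For $H$ the argument is identical, using $V = K \cap H$, $\widehat{F} = \Qbar(V)$ from Lemma~\ref{lem:10.3}, $I \le H$, and $E = KH$.

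The only point requiring care is step (2): the elements of $K$ must be recognized inside $\widehat{F}$. This holds because $K \le F^\times$ and $F = \Q(U)$ sits inside $\widehat{F} = \Frac(\Qbar \otimes_{\Q} F)$, which Lemma~\ref{lem:10.3} identifies with $\Qbar(U)$. Once this is in place, the remaining steps are bookkeeping with $\Theta(a,k) = kX^{a-\bar{k}}$.
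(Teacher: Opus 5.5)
Your proof is correct and is essentially the paper's argument. The paper also obtains $\widehat{F}=\Qbar(U)\subseteq \Qbar(\Theta(\mathcal{A}_G))$ from $\Theta(\bar u,u)=u$, which is exactly your product $\Theta(0,u)\,\Theta(\bar u,1)$, and then uses $\Theta(a,1)=X^a$ for $a\in G/I$, $X^{\bar k}=k\,\Theta(0,k)^{-1}$ for $k\in K$, and $E=KG$ to reach every monomial $X^a$ with $a\in A$. Your explicit remarks that $K\subseteq F^\times\subseteq \widehat{F}$ and that $I\le L$ lies in both $G$ and $H$ simply spell out steps the paper leaves implicit.
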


\begin{proof}
For $u \in U$, $\Theta(\bar{u}, u) = u$, so $\Qbar(\Theta(\mathcal{A}_G))$ contains $\Qbar(U) = \widehat{F}$. For $a \in G/I$, $\Theta(a, 1) = X^a$, and for $k \in K$, $\Theta(0, k) = k X^{-\bar{k}}$ gives $X^{\bar{k}} = k\Theta(0, k)^{-1}$. Since $E = KG$, $A = E/I$ is generated by $G/I$ and $\bar{K}$, so $\Qbar(\Theta(\mathcal{A}_G))$ contains $X^a$ for all $a \in A$ and thus equals $\Omega$. The proof for $H$ is identical using $\widehat{F} = \Qbar(V)$ and $E = KH$.
\end{proof}

Set $k_{\mathrm{aux}} := \Qbar(\Theta(\mathcal{A}_L)) \subseteq \Omega$; clearly $\Qbar \subseteq k_{\mathrm{aux}}$.

\begin{lemma}\label{lem:10.7}
The natural isomorphisms
\[
\frac{\mathcal{A}_G}{\mathcal{A}_L} \cong \frac{G}{L} = P_G, \qquad \frac{\mathcal{A}_H}{\mathcal{A}_L} \cong \frac{H}{L} = P_H
\]
yield $\Omega = k_{\mathrm{aux}}(P_G) = k_{\mathrm{aux}}(P_H)$, where both sides are purely transcendental extensions of $k_{\mathrm{aux}}$ generated by bases of $P_G$ and $P_H$.
\end{lemma}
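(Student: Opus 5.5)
The plan is to realize $\Omega$ as a Laurent-type extension of $k_{\mathrm{aux}}$ by splitting the abelian group $\mathcal{A}_G$ over $\mathcal{A}_L$. Since $P_G = G/L$ is free abelian (Lemma \ref{lem:10.1}), the natural surjection $\mathcal{A}_G = (G/I)\times K \twoheadrightarrow G/L$ with kernel $(L/I)\times K = \mathcal{A}_L$ splits.

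\textbf{Splitting.} Choose a basis $(p_i)_{i\in I_G}$ of $P_G$ and lifts $g_i \in G$, and set $s_i := (\bar g_i, 1) \in \mathcal{A}_G$. Then $\mathcal{A}_G = \mathcal{A}_L \oplus \bigoplus_i \Z s_i$ as abelian groups. Put $x_i := \Theta(s_i) = X^{\bar g_i}$.

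\textbf{Step 1: the $x_i$ generate $\Omega$ over $k_{\mathrm{aux}}$.} By Lemma \ref{lem:10.6}, $\Omega = \Qbar(\Theta(\mathcal{A}_G))$. Every element of $\Theta(\mathcal{A}_G)$ is a product of an element of $\Theta(\mathcal{A}_L)$ with a monomial in the $x_i^{\pm1}$. Hence
\[
\Omega = k_{\mathrm{aux}}(x_i : i\in I_G).
\]

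\textbf{Step 2: algebraic independence, the main obstacle.} We must show the $x_i$ are algebraically independent over $k_{\mathrm{aux}}$. It suffices to check that monomials $x^{e}$ with $e\in\Z^{(I_G)}$ distinct are linearly independent over $k_{\mathrm{aux}}$. After clearing denominators, it is enough to show independence over the ring $\Qbar[\Theta(\mathcal{A}_L)]$.

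Suppose $\sum_e c_e x^e = 0$ with $c_e \in \Qbar[\Theta(\mathcal{A}_L)]$. Expanding, each $c_e x^e$ lies in the image of $\Qbar[\mathcal{A}_L + \sum e_i s_i]$, that is, of the coset $\mathcal{A}_L + \sum_i e_i s_i$ of $\mathcal{A}_G$. The cosets for distinct $e$ are distinct because the sum $\mathcal{A}_L\oplus\bigoplus\Z s_i$ is direct. So the relation is a relation in $\Qbar[\mathcal{A}_G]$ whose terms are supported on pairwise disjoint subsets of $\mathcal{A}_G$. Since $\Theta$ induces an injection $\Qbar[\mathcal{A}_G]\hookrightarrow \widehat F[A]$ (Lemma \ref{lem:10.5}), the preimage relation holds in $\Qbar[\mathcal{A}_G]$. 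There, group elements form a basis, so disjointness of supports forces every $c_e = 0$.

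The subtle point is that Lemma \ref{lem:10.5} gives injectivity of the whole group algebra $\Qbar[\mathcal{A}_G]$, not just of the pieces. That is precisely what prevents cancellation between different cosets, so I will invoke it for $\mathcal{A}_G$ itself. Clearing denominators is harmless because $\Qbar[\Theta(\mathcal{A}_L)]$ is a domain whose fraction field is $k_{\mathrm{aux}}$.

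\textbf{Step 3: conclusion.} Consequently $k_{\mathrm{aux}}[\Theta(\mathcal{A}_G)] \cong k_{\mathrm{aux}}[P_G]$ via $x_i \mapsto X^{p_i}$, and taking fraction fields gives $\Omega = k_{\mathrm{aux}}(P_G)$, purely transcendental on the basis $(x_i)$. Identical reasoning with $H$ in place of $G$, using $\widehat F = \Qbar(V)$ and the $\mathcal{A}_H$ part of Lemmas \ref{lem:10.5} and \ref{lem:10.6}, gives $\Omega = k_{\mathrm{aux}}(P_H)$.

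The identification of monomials is canonical up to the choice of lifts. Changing the lift of $p\in P_G$ multiplies $X^{p}$ by an element of $\Theta(\mathcal{A}_L)\subseteq k_{\mathrm{aux}}^\times$. This is the sense in which the isomorphisms are "natural", and it is what the subsequent weighted-isomorphism argument needs.
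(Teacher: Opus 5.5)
Your proposal is correct and follows essentially the same route as the paper: you split $\mathcal{A}_G = \mathcal{A}_L \oplus R_G$ (choosing the explicit section $s_i = (\bar g_i, 1)$, so that $x_i = X^{\bar g_i}$), clear denominators into $\Qbar[\Theta(\mathcal{A}_L)]$, and combine disjointness of $\mathcal{A}_L$-cosets with the injectivity of Lemma~\ref{lem:10.5} to get independence, with generation coming from Lemma~\ref{lem:10.6}. Your Step 1 spells out the generation argument that the paper leaves implicit.
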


\begin{proof}
Clearly $\mathcal{A}_G/\mathcal{A}_L = ((G/I) \times K)/((L/I) \times K) \cong (G/I)/(L/I) \cong G/L = P_G$. Since $P_G$ is free abelian by Lemma \ref{lem:10.1}, the short exact sequence
\[
0 \longrightarrow \mathcal{A}_L \longrightarrow \mathcal{A}_G \longrightarrow P_G \longrightarrow 0
\]
splits; choose a section $R_G \le \mathcal{A}_G$ so that $\mathcal{A}_G = \mathcal{A}_L \oplus R_G$ with $R_G \cong P_G$.

For any distinct $r_1, \dots, r_s \in R_G$, if $\sum_{i=1}^s f_i \Theta(r_i) = 0$ with $f_i \in k_{\mathrm{aux}}$, clearing common denominators allows us to assume $f_i \in \Qbar[\Theta(\mathcal{A}_L)]$. Since $\mathcal{A}_L \oplus R_G$ is a direct sum, distinct $r_i$ lie in distinct $\mathcal{A}_L$-cosets, so expanding each $f_i$ produces a $\Qbar$-linear combination of distinct elements of $\mathcal{A}_G$; Lemma \ref{lem:10.5} then forces $f_i = 0$ for all $i$. Thus $\Theta(R_G)$ is linearly independent over $k_{\mathrm{aux}}$. Choosing a basis $(r_\lambda)_{\lambda \in \Lambda}$ of $R_G$, any polynomial relation $\sum_e f_e \prod_\lambda \Theta(r_\lambda)^{e_\lambda} = \sum_e f_e \Theta\left(\sum_\lambda e_\lambda r_\lambda\right) = 0$ over $k_{\mathrm{aux}}$ is a $k_{\mathrm{aux}}$-linear relation among distinct elements of $\Theta(R_G)$, whence $f_e = 0$. Therefore $(\Theta(r_\lambda))_{\lambda \in \Lambda}$ are algebraically independent over $k_{\mathrm{aux}}$, and Lemma \ref{lem:10.6} gives $\Omega = \Qbar(\Theta(\mathcal{A}_G)) = k_{\mathrm{aux}}(P_G)$, and similarly $\Omega = k_{\mathrm{aux}}(P_H)$.
\end{proof}

With $B = E/KL$ as the weight group, for each character $\chi : B \to \Qbar^\times$ we define on $\widehat{F}[A]$:
\[
\sigma_\chi(f) = f \quad (f \in \widehat{F}), \qquad \sigma_\chi(X^{eI}) = \chi(eKL)X^{eI} \quad (e \in E).
\]

\begin{lemma}\label{lem:10.8}
$\sigma_\chi$ is well-defined and extends to a field automorphism of $\Omega$, under which the weight of $\Theta(a, k)$ for $(a, k) \in A \times K$ is the image of $a$ in $B$. In particular:
\begin{enumerate}
\item[(i)] $\sigma_\chi$ fixes $k_{\mathrm{aux}}$ pointwise;
\item[(ii)] on $P_G$, the induced weight map is precisely $\rho_G : P_G \twoheadrightarrow B$;
\item[(iii)] on $P_H$, the induced weight map is precisely $\rho_H : P_H \twoheadrightarrow B$.
\end{enumerate}
\end{lemma}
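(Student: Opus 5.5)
The plan is to define $\sigma_\chi$ first on the group ring $\widehat{F}[A]$ and then pass to fractions.

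\textbf{Well-definedness.} The assignment $X^{eI}\mapsto \chi(eKL)X^{eI}$ depends only on $eI\in A$. This holds because $I\le L\le KL$, so the map $A\to B$, $eI\mapsto eKL$, is a well-defined group homomorphism (noted after Lemma \ref{lem:10.2}). Composing with $\chi$ gives a character $\bar\chi : A\to \Qbar^\times$.

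\textbf{Extension to $\Omega$.} We define $\sigma_\chi$ on $\widehat{F}[A]=\bigoplus_{a\in A}\widehat{F}X^a$ by
\[
\sigma_\chi\Bigl(\sum_a f_aX^a\Bigr)=\sum_a f_a\bar\chi(a)X^a .
\]
This is clearly $\widehat{F}$-linear and multiplicative, since $\bar\chi$ is a homomorphism. It is bijective, with inverse $\sigma_{\chi^{-1}}$, so it is a ring automorphism of the domain $\widehat{F}[A]$. It therefore extends uniquely to an automorphism of $\Omega=\Frac\widehat{F}[A]$.

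\textbf{Weight of $\Theta(a,k)$.} Since $k\in K\subseteq F\subseteq\widehat{F}$ is fixed, we get
\[
\sigma_\chi(\Theta(a,k))=\bar\chi(a-\bar k)\,\Theta(a,k).
\]
For $k\in K$ the image of $\bar k$ in $B$ is $kKL$, which is trivial. Hence $\bar\chi(a-\bar k)=\bar\chi(a)$, so the weight of $\Theta(a,k)$ is the image of $a$ in $B$.

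\textbf{Consequences (i)--(iii).}
\begin{enumerate}
\item[(i)] For $(a,k)\in\mathcal{A}_L$ we have $a\in L/I$, whose image in $B$ is trivial. So $\sigma_\chi$ fixes every $\Theta(a,k)$ with $(a,k)\in\mathcal{A}_L$, and it fixes $\Qbar$. Hence it fixes the field they generate, $k_{\mathrm{aux}}$.
\item[(ii)] Under the identification $\mathcal{A}_G/\mathcal{A}_L\cong P_G$ of Lemma \ref{lem:10.7}, a section element $r=(gI,k)\in R_G$ corresponds to $gL$. Its weight is the image of $gI$ in $B$, namely $gKL=\rho_G(gL)$. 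So $\sigma_\chi$ acts on the transcendence basis $\Theta(r_\lambda)$ exactly as $\sigma_\chi^{P_G}$ does with weight map $\rho_G$. This is independent of the chosen section, because changing $r$ by an element of $\mathcal{A}_L$ does not change its weight.
\item[(iii)] This is identical to (ii), with $H$ in place of $G$.
\end{enumerate}

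\textbf{Main obstacle.} The only delicate point is that Theorem \ref{thm:6.1} requires characters into an algebraically closed subfield $k_*$ of the base field. Here $k_*=\Qbar\subseteq k_{\mathrm{aux}}$, and the characters $\chi:B\to\Qbar^\times$ are exactly what is needed. This bookkeeping is routine, and the computations above suffice.
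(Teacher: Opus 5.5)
Your proposal is correct and follows essentially the same argument as the paper. Both establish well-definedness via $I \le L \le KL$, define $\sigma_\chi$ on $\widehat{F}[A]$ with inverse $\sigma_{\chi^{-1}}$ and extend it to $\Omega$, compute the weight of $\Theta(a,k)$ using that $k \in \widehat{F}$ is fixed and has trivial image in $B$, and then read off (i)--(iii) from $\mathcal{A}_L$ and the section $R_G$. Your observation that the weight of $t_p$ does not depend on the chosen section is a small addition that the paper leaves implicit.
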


\begin{proof}
If $eI = e'I$, then $e^{-1}e' \in I \le L \le KL$, so $eKL = e'KL$; thus $\sigma_\chi$ is a well-defined ring automorphism of $\widehat{F}[A]$ with inverse $\sigma_{\chi^{-1}}$ and extends to $\Omega$. For $a = eI \in A$ and $k \in K \subseteq \widehat{F}^\times$, we have $\Theta(a, k) = k X^{(ek^{-1})I}$; since $k \in \widehat{F}$ is fixed by $\sigma_\chi$ and $ek^{-1}KL = eKL$, we obtain $\sigma_\chi(\Theta(a, k)) = \chi(eKL)\Theta(a, k)$.

If $(a, k) \in \mathcal{A}_L$, then $a = lI$ for some $l \in L$, so $\chi(lKL) = 1$ and $\sigma_\chi(\Theta(a, k)) = \Theta(a, k)$; since $\sigma_\chi$ also fixes $\Qbar$, it fixes $k_{\mathrm{aux}}$ pointwise. Under the splitting $\mathcal{A}_G = \mathcal{A}_L \oplus R_G$ with isomorphism $q : R_G \xrightarrow{\sim} P_G$, $(gI, k) \mapsto gL$, each $p = gL \in P_G$ with $q^{-1}(p) = (gI, k) \in R_G$ corresponds to the monomial $t_p := \Theta(gI, k) = k X^{(gk^{-1})I}$, which satisfies
\[
\sigma_\chi(t_p) = \chi(gk^{-1}KL) t_p = \chi(gKL) t_p = \chi(\rho_G(p)) t_p,
\]
and similarly for $P_H$.
\end{proof}

Taking $\Phi = \id_\Omega$ on $\Omega = k_{\mathrm{aux}}(P_G) = k_{\mathrm{aux}}(P_H)$ therefore gives a $B$-weighted $k_{\mathrm{aux}}$-field isomorphism over $k_{\mathrm{aux}} \supseteq \Qbar$. Applying Theorem \ref{thm:6.1} yields:

\begin{corollary}\label{cor:10.9}
There exists a group isomorphism
\[
\beta : P_G \xrightarrow{\sim} P_H
\]
such that
\[
\rho_H \beta = \rho_G.
\]
\end{corollary}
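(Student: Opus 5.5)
The plan is to apply Theorem \ref{thm:6.1} directly to the data assembled in Lemmas \ref{lem:10.1}--\ref{lem:10.8}, after checking that each hypothesis of that theorem is met.

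\emph{Groups and weight maps.} Take the base field to be $k := k_{\mathrm{aux}}$, with algebraically closed subfield $k_* := \Qbar \subseteq k_{\mathrm{aux}}$ of characteristic zero. Take $P := P_G$ and $Q := P_H$, which are free abelian by Lemma \ref{lem:10.1}, together with the surjections $\rho_P := \rho_G$ and $\rho_Q := \rho_H$ onto the abelian group $B = E/KL$, also supplied by Lemma \ref{lem:10.1}. No rank restriction is needed, since Theorem \ref{thm:6.1} covers arbitrary rank.

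\emph{Identification of function fields.} Next I identify $k(P)$ and $k(Q)$ with $\Omega$. Lemma \ref{lem:10.7} provides splittings $\mathcal{A}_G = \mathcal{A}_L \oplus R_G$ and $\mathcal{A}_H = \mathcal{A}_L \oplus R_H$. It also shows that the elements $\Theta(r)$, for $r$ running over a basis of $R_G$ (respectively $R_H$), are algebraically independent over $k_{\mathrm{aux}}$ and generate $\Omega$. This yields $k$-isomorphisms
\[
\iota_G : k_{\mathrm{aux}}(P_G) \xrightarrow{\sim} \Omega, \qquad \iota_H : k_{\mathrm{aux}}(P_H) \xrightarrow{\sim} \Omega,
\]
determined by $X^p \mapsto t_p = \Theta(q^{-1}(p))$, where $q$ is the splitting isomorphism. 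The injectivity of $\iota_G$ on the monomial group holds because $\Theta$ is injective (Lemma \ref{lem:10.4}). Set $\Phi := \iota_H^{-1} \circ \iota_G$; it is a $k_{\mathrm{aux}}$-isomorphism $k_{\mathrm{aux}}(P_G) \to k_{\mathrm{aux}}(P_H)$.

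\emph{Weighted compatibility.} I then check that $\Phi$ is $B$-weighted. For a character $\chi : B \to \Qbar^\times$, Lemma \ref{lem:10.8} gives an automorphism $\sigma_\chi$ of $\Omega$ that fixes $k_{\mathrm{aux}}$ and satisfies $\sigma_\chi(t_p) = \chi(\rho_G(p))\,t_p$ for $p \in P_G$, and likewise for $P_H$ with $\rho_H$. Hence $\iota_G \circ \sigma_\chi^{P_G} = \sigma_\chi \circ \iota_G$, because the two sides agree on $k_{\mathrm{aux}}$ and on the generators $X^p$. The same relation holds for $\iota_H$, so $\Phi \circ \sigma_\chi^{P_G} = \sigma_\chi^{P_H} \circ \Phi$. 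Theorem \ref{thm:6.1} then produces $\beta : P_G \xrightarrow{\sim} P_H$ with $\rho_H\beta = \rho_G$.

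\emph{Main obstacle.} The step that needs care is this identification of $\sigma_\chi$ on $\Omega$ with the abstract automorphisms $\sigma_\chi^{P}$ used in Theorem \ref{thm:6.1}. The paper's construction of $f$ and of the filtration in Section 6 only ever uses the $\sigma_\chi$ through their action on monomials and on the constant field, so it suffices that $\sigma_\chi$ acts trivially on all of $k_{\mathrm{aux}}$, not merely on $\Qbar$. This is exactly Lemma \ref{lem:10.8}(i), and it holds because every generator $\Theta(lI,k)$ of $k_{\mathrm{aux}}$ has weight $lKL = 1$ in $B$. A second, minor point is that Theorem \ref{thm:6.1} uses characters valued in $k_*^\times = \Qbar^\times$, which is precisely the range for which Lemma \ref{lem:10.8} defines $\sigma_\chi$, so the character sets match.
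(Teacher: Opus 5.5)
Your proposal is correct and is essentially the paper's argument. The paper takes $\Phi = \id_\Omega$ on $\Omega = k_{\mathrm{aux}}(P_G) = k_{\mathrm{aux}}(P_H)$ (Lemma~\ref{lem:10.7}), uses Lemma~\ref{lem:10.8} to see that it is a $B$-weighted $k_{\mathrm{aux}}$-isomorphism with $\Qbar \subseteq k_{\mathrm{aux}}$, and applies Theorem~\ref{thm:6.1}; your $\Phi = \iota_H^{-1}\circ\iota_G$ is exactly this identity map with the two identifications made explicit.
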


\section{Proof of Theorem \ref{thm:main}}

In this section, we combine the weight-preserving quotient isomorphism $\beta : G/L \xrightarrow{\sim} H/L$ from Section 10 to complete the proof of Theorem \ref{thm:main}: we lift the difference map $\beta(p) - p \in KL/L$ along the free abelian group $G/L$ to a central homomorphism $\eta : G/L \to K$, and prove that the centrally modified map $\Psi(g) = g\eta(gL)$ is a group isomorphism $G \xrightarrow{\sim} H$.

Since $E' \le L$, $E/L$ is an abelian group (written additively here), containing $P_G = G/L$ and $P_H = H/L$ as subgroups. By Corollary \ref{cor:10.9}, for every $p \in P_G$ we have
\[
\beta(p) - p \in \ker(E/L \longrightarrow E/KL) = KL/L.
\]
The second isomorphism theorem gives a natural surjection $K \twoheadrightarrow KL/L$, $k \mapsto kL$.

\begin{lemma}\label{lem:11.1}
There exists a group homomorphism $\eta : P_G \to K$ such that for every $p \in P_G$,
\[
\eta(p)L = \beta(p) - p \quad \text{in } E/L.
\]
\end{lemma}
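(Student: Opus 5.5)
The plan is to lift the difference map along a basis of the free abelian group $P_G$. First I will check that the assignment
\[
d : P_G \longrightarrow E/L, \qquad d(p) := \beta(p) - p,
\]
is a group homomorphism. Since $E' \le L$ (Lemma \ref{lem:10.1}), the group $E/L$ is abelian. Both the inclusion $P_G = G/L \hookrightarrow E/L$ and the composite $P_G \xrightarrow{\beta} P_H = H/L \hookrightarrow E/L$ are homomorphisms, so their difference $d$ is a homomorphism as well. Next, $d$ takes values in $KL/L$. Indeed, applying the projection $E/L \twoheadrightarrow E/KL = B$ to $d(p)$ gives $\rho_H(\beta(p)) - \rho_G(p)$. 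This vanishes by Corollary \ref{cor:10.9}, because $\rho_G$ and $\rho_H$ are exactly the restrictions of that projection to $G/L$ and $H/L$. So $d(p)$ lies in the kernel of the projection, which is $KL/L$.

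For the lifting step, Lemma \ref{lem:10.1} says $P_G$ is free abelian, so I will fix a basis $(p_\lambda)_{\lambda\in\Lambda}$. By the second isomorphism theorem, the map $K \to KL/L$, $k \mapsto kL$, is surjective. For each $\lambda$, I will choose $k_\lambda \in K$ with $k_\lambda L = d(p_\lambda)$; this uses the axiom of choice when the index set is infinite. The universal property of the free abelian group then gives a unique homomorphism $\eta : P_G \to K$ with $\eta(p_\lambda) = k_\lambda$. Note that $K$ is abelian, being a subgroup of $F^\times$.

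It remains to compare $\eta$ with $d$. The two maps $p \mapsto \eta(p)L$ and $d$ are both homomorphisms $P_G \to KL/L$, and they agree on the basis. Hence they agree everywhere, which is the assertion $\eta(p)L = \beta(p) - p$. I expect no real obstacle here: this is just projectivity of free abelian groups. The only points needing care are writing $E/L$ additively while writing $K$ multiplicatively, and confirming that $\rho_G$ and $\rho_H$ really are the restrictions of the single map $E/L \to B$, which holds by their definition in Lemma \ref{lem:10.1}.
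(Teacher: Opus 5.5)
Your proposal is correct and matches the paper's proof. The paper likewise notes that $d(p)=\beta(p)-p$ is a homomorphism into $KL/L$, since $\rho_H\beta=\rho_G$ and both weight maps are restrictions of $E/L\twoheadrightarrow E/KL$, and then lifts $d$ along $K\twoheadrightarrow KL/L$ using projectivity of the free abelian group $P_G$; your basis-by-basis choice of $k_\lambda$ is exactly that projectivity argument written out.
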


\begin{proof}
Since $E/L$ is abelian and $\beta$ is a homomorphism, $d(p) := \beta(p) - p$ defines a group homomorphism $d : P_G \to KL/L$. Because $P_G$ is free abelian and hence projective as a $\Z$-module, $d$ lifts along the surjection $K \twoheadrightarrow KL/L$ to a homomorphism $\eta : P_G \to K$.
\end{proof}

Returning to multiplicative notation in $E$, define
\[
\begin{aligned}
\Psi : G &\longrightarrow E, \\
g &\longmapsto g\eta(gL),
\end{aligned}
\qquad \eta(gL) \in K = Z(E).
\]

\begin{lemma}\label{lem:11.2}
$\Psi$ is a group homomorphism, $\Psi(G) \le H$, and $\Psi|_L = \id_L$. Moreover, $\Psi(g)L = \beta(gL)$ in $E/L$.
\end{lemma}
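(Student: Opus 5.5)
The four claims follow from the centrality of $\eta$ and from Lemma \ref{lem:11.1}; the only point needing care is $\Psi|_L=\id_L$, which requires $\eta$ to be trivial on $L$. I verify the claims in order.

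\emph{Homomorphism.} Since $\eta(gL)\in K=Z(E)$ (Lemma \ref{lem:9.4}) and $\eta$ is a homomorphism on $P_G=G/L$, for $g,g'\in G$ we get
\[
\Psi(gg')=gg'\,\eta(gL)\,\eta(g'L)=g\,\eta(gL)\,g'\,\eta(g'L)=\Psi(g)\Psi(g').
\]
The middle equality moves the central element $\eta(gL)$ past $g'$.

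\emph{The formula $\Psi(g)L=\beta(gL)$.} Pass to the abelian group $E/L$, written additively. The image of $\Psi(g)$ is $gL+\eta(gL)L$. By Lemma \ref{lem:11.1}, $\eta(gL)L=\beta(gL)-gL$, so the image equals $\beta(gL)$.

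\emph{Inclusion $\Psi(G)\le H$.} Since $\beta$ takes values in $P_H=H/L$, we have $\beta(gL)=hL$ for some $h\in H$. The previous step then gives $\Psi(g)\in hL\subseteq H$, because $L\le H$.

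\emph{$\Psi|_L=\id_L$.} For $g\in L$ we have $\Psi(g)=g\,\eta(L)=g\,\eta(0_{P_G})$. This equals $g$ because $\eta$ is a homomorphism, so $\eta(0)=1$.

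I expect no real obstacle. The main subtlety is to confirm that $L$ is exactly the identity element of $P_G$, so that $\eta$ vanishes on $L$; this holds by the definition $P_G=G/L$. A second point is that the additive computation must be done in $E/L$, which is abelian since $E'\le L$ (Lemma \ref{lem:10.1}), and that $gL$, $\beta(gL)$ and $\eta(gL)L$ all lie in $E/L$. I would state both facts explicitly before doing the computation.
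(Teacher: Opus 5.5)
Your proposal is correct and follows the paper's proof essentially step for step. It uses the centrality of $\eta(gL)\in K=Z(E)$ for the homomorphism property, the additive computation in the abelian group $E/L$ via Lemma \ref{lem:11.1} for $\Psi(g)L=\beta(gL)$, the inclusion $L\le H$ to get $\Psi(G)\le H$, and $\eta(L)=1$ to get $\Psi|_L=\id_L$.
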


\begin{proof}
Since $\eta(gL) \in K = Z(E)$ is central and $g \mapsto \eta(gL)$ is a homomorphism,
\[
\Psi(g_1 g_2) = g_1 g_2 \eta(g_1 g_2 L) = g_1 g_2 \eta(g_1 L)\eta(g_2 L) = \Psi(g_1)\Psi(g_2).
\]
By Lemma \ref{lem:11.1}, in $E/L$ (written additively) we have
\[
\Psi(g)L = gL + \eta(gL)L = gL + \beta(gL) - gL = \beta(gL) \in H/L.
\]
Since $L \le H$, the full preimage of $H/L$ under $E \twoheadrightarrow E/L$ is $H$, so $\Psi(g) \in H$. For $l \in L$, $lL = L$ and $\eta(L) = 1$, so $\Psi(l) = l$.
\end{proof}

\begin{theorem}\label{lem:11.3}
$\Psi : G \longrightarrow H$ is a group isomorphism.
\end{theorem}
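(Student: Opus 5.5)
We already know from Lemma \ref{lem:11.2} that $\Psi$ is a homomorphism with $\Psi(G)\le H$ and $\Psi|_L=\id_L$. The plan is to prove injectivity and surjectivity separately. Both arguments reduce, through the quotient by $L$, to the fact that $\beta : P_G \to P_H$ is an isomorphism (Corollary \ref{cor:10.9}).

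\emph{Injectivity.} Let $g\in G$ with $\Psi(g)=1$. Lemma \ref{lem:11.2} gives $\beta(gL)=\Psi(g)L=L$. Since $\beta$ is injective, $gL=L$, so $g\in L$. Then $g=\Psi(g)=1$, because $\Psi$ is the identity on $L$. This step is purely formal.

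\emph{Surjectivity.} Let $h\in H$. Since $\beta$ is surjective, there is $g\in G$ with $\beta(gL)=hL$. Then $\Psi(g)L=hL$, so $h=\Psi(g)\,l$ for some $l\in L$. Now $l=\Psi(l)$, hence $h=\Psi(g)\Psi(l)=\Psi(gl)$, which lies in $\Psi(G)$.

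There is no serious obstacle here: the real work was done in Sections 10 and 11, in producing $\beta$ and the central lift $\eta$. The one point to verify is that $L\le H$ is used correctly. Membership $\Psi(g)\in H$ is already established, and the computation $h\Psi(g)^{-1}\in L$ takes place inside the abelian quotient $E/L$, where $E'\le L$ by Lemma \ref{lem:10.1}. Since $L\le G$ as well, the element $gl$ lies in $G$. Combining the two parts, $\Psi$ is a group isomorphism $G\to H$. Together with $H_1\cong H$ from Section 8 and $H^\circ\cong H$ from Lemma \ref{lem:9.2}, this yields $G\cong H$ for the original groups and completes Theorem \ref{thm:main}.
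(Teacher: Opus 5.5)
Your proof is correct and follows the paper's argument essentially verbatim. Injectivity comes from the injectivity of $\beta$ together with $\Psi|_L=\id_L$, and surjectivity comes from choosing a preimage of $hL$ under $\beta$ and correcting it by an element $l=\Psi(g)^{-1}h\in L$ fixed by $\Psi$.
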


\begin{proof}
We first prove injectivity: if $\Psi(g) = 1$, then $\beta(gL) = \Psi(g)L = L$; since $\beta$ is injective, $gL = L$, so $g \in L$, and since $\Psi|_L = \id_L$, we obtain $g = \Psi(g) = 1$.

Next we prove surjectivity: for any $h \in H$, since $\beta : P_G \to P_H$ is surjective, there exists $g \in G$ such that $\beta(gL) = hL$. Then $\Psi(g)L = \beta(gL) = hL$, so $l := \Psi(g)^{-1}h \in L$, and hence
\[
\Psi(gl) = \Psi(g)\Psi(l) = \Psi(g)l = h.
\]
Thus $\Psi : G \xrightarrow{\sim} H$ is a group isomorphism.
\end{proof}

This completes the proof of $G \cong H$.

\section{Extensions to Base Fields of Characteristic Zero and Torsion-Free Hypercentral Groups}

In this section, we show that the proofs in Sections 7--11 apply not only over any base field $k$ of characteristic zero, but also carry over without substantive change to arbitrary torsion-free hypercentral groups (as well as general torsion-free groups satisfying bi-orderability, the Ore condition, and nontrivial central intersection of normal subgroups).

First, because the preceding arguments used no special arithmetic properties of $\Q$ beyond its being the base field of characteristic zero and admitting an algebraic closure in Section 10, we immediately obtain:

\begin{corollary}\label{cor:12.1}
Let $k$ be any field of characteristic zero, and let $G$ and $H$ be torsion-free nilpotent groups. If there exists a $k$-algebra isomorphism
\[
D_k(G) \cong_k D_k(H),
\]
then $G \cong H$.
\end{corollary}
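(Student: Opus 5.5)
The plan is to rerun Sections 7--11 verbatim with $\Q$ replaced by $k$. We only need to flag the few places where $\Q$ was used in a way that does not transfer automatically, and adjust them.

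Fix a $k$-algebra isomorphism $\varphi : D_k(H) \to D_k(G)$. The first stage carries over directly.
\begin{itemize}
\item Section 7 goes through unchanged. Both $k[G]$ and $D_k(G)$ embed in $k((G,<))$. Lemma \ref{lem:7.1} is purely about supports, and the argument of Proposition \ref{prop:7.2} gives $Z(D_k(G)) = \Frac k[Z(G)] =: F$.
\item Since $\varphi$ is $k$-linear, it sends $\Frac k[Z(H)]$ onto $F$ and fixes $k$.
\item Section 8 uses only the strong $G/Z(G)$-grading of $A_G = (k[Z]\setminus\{0\})^{-1}k[G] = \bigoplus_c F g_c$, Lemma \ref{lem:8.1}, and Lemma \ref{lem:8.3}. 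It therefore yields $H_1 \le F^\times G$ with $H_1 \cong H$, $F^\times H_1 = F^\times G$, $k[H_1] \hookrightarrow D$ and $D_k(H_1) = D$.
\end{itemize}

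The second stage is Section 9, where $\Q^\times$ becomes $k^\times$.
\begin{itemize}
\item The lowest-term coefficient map $c_0 : k[G]\setminus\{0\} \to k^\times$ is multiplicative by bi-invariance. It extends to $c : D^\times \to k^\times$ with $c|_{k^\times} = \id$ and $c|_G = 1$.
\item Normalising $H$ by $h \mapsto c(h)^{-1}h$ is legitimate because $k^\times$ is central. This gives $F^\times = k^\times \times K$ and $E = KG = KH$.
\item The quotients $K/U \cong F^\times/(k^\times U)$ and $K/V$ are $\Delta_k(U)$ and $\Delta_k(V)$, which are free abelian by Theorem \ref{thm:9.9}; that theorem was already proved for an arbitrary field of characteristic zero.
\item Lemma \ref{lem:9.6} then gives $E = G \times C_G = H \times C_H$, and Lemmas \ref{lem:10.1}--\ref{lem:10.2} are purely group-theoretic.
\end{itemize}

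The third stage, Section 10, is the main obstacle: the step "$\Qbar\otimes_\Q F$ is a field" in Lemma \ref{lem:10.3}. Over a general $k$, the ring $\bar k\otimes_k F$ is still a field. The reason is that $F = \bigcup k(U_0)$ with each $k(U_0)$ purely transcendental over $k$, so $L\otimes_k k(U_0) \cong L(U_0)$ for every finite $L/k$, and this is a field. Taking directed unions gives $\widehat F = \bar k(U) = \bar k(V)$, with distinct elements of $U$ linearly independent over $\bar k$.

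I would actually avoid this issue entirely by not extending $k$. Lemma \ref{lem:10.3} only needs some algebraically closed field of characteristic zero inside the auxiliary field, so it suffices to tensor with $\Qbar$ over $\Q$. However, $\Qbar\otimes_\Q k$ need not be a field when $k$ contains algebraic numbers. The cleaner route is therefore to take $\widehat F := \bar k(U)$ directly, as the fraction field of $\bar k\otimes_k k(U) = \bar k(U)$. It contains $\Qbar \subseteq \bar k$ as an algebraically closed subfield of characteristic zero, which is exactly what Theorem \ref{thm:6.1} requires.

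With $\widehat F = \bar k(U) = \bar k(V)$ in hand, the remaining steps go through as before.
\begin{itemize}
\item Lemmas \ref{lem:10.4}--\ref{lem:10.7} go through with $\Qbar$ replaced by $\bar k$, using the $\bar k$-linear independence of $U$ and of $V$. They give $\Omega = k_{\mathrm{aux}}(P_G) = k_{\mathrm{aux}}(P_H)$ with $k_{\mathrm{aux}} \supseteq \bar k$.
\item The characters $\chi : B \to \Qbar^\times$ act as in Lemma \ref{lem:10.8}.
\item Theorem \ref{thm:6.1}, applied with $k_* = \Qbar$, gives Corollary \ref{cor:10.9}.
\end{itemize}

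Section 11 is purely group-theoretic and produces $\Psi : G \xrightarrow{\sim} H$. This proves the corollary.
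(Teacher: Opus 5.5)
Your proposal is correct and is the paper's argument: the paper's proof of this corollary is simply the instruction to replace $\Q$ by $k$ and $\Qbar$ by $\bar k$ throughout Sections 7--11, noting that Theorem~\ref{thm:9.9} already holds over any field of characteristic zero. Your extra checks are sound elaborations of that one-line proof and change nothing in the route. These are that $k$-linearity of $\varphi$ is what gives $F=k(U)=k(V)$ over the same $k$, and that $\bar k\otimes_k F$ must be used because $\Qbar\otimes_\Q F$ need not be a domain. Taking $k_*=\Qbar$ instead of $\bar k$ in Theorem~\ref{thm:6.1} is immaterial.
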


\begin{proof}
Replace $\Q$ by $k$ and $\Qbar$ by an algebraic closure $\bar{k}$ throughout; the bi-ordering and Ore division ring constructions \cite{Malcev1951,Neumann1949,Tamari1954,BartholdiKielak2019} hold over any field, and Theorem \ref{thm:9.9} (\cite{LinWang2026}) holds over any field of characteristic zero.
\end{proof}

Inspecting the arguments in Sections 7--11 shows that the proof does not depend on the strong hypothesis that the lower central series terminates in finitely many steps. Specifically, recall that the transfinite upper central series $(Z_\alpha(K))_{\alpha \ge 0}$ of a group $K$ is defined by
\[
Z_0(K) = 1, \qquad Z_{\alpha+1}(K)/Z_\alpha(K) = Z\bigl(K/Z_\alpha(K)\bigr),
\]
and $Z_\lambda(K) = \bigcup_{\alpha < \lambda} Z_\alpha(K)$ for limit ordinals $\lambda$. A group $K$ is called a \textbf{hypercentral group} (or $ZA$-group) if $Z_\Lambda(K) = K$ for some ordinal $\Lambda$. We now verify that the three places in Sections 7 and 8 where nilpotency was invoked hold in full generality for arbitrary torsion-free hypercentral groups.

\begin{lemma}\label{lem:12.2}
If $K$ is a hypercentral group and $N \triangleleft K$ with $N \neq 1$, then $N \cap Z(K) \neq 1$.
\end{lemma}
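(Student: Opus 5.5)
The plan is to imitate the proof of Lemma \ref{lem:8.3}, replacing the finite upper central series by the transfinite one and using a minimality argument over ordinals. Since $N \neq 1$ and $K = Z_\Lambda(K) = \bigcup_{\alpha \le \Lambda} Z_\alpha(K)$, some $Z_\alpha(K)$ meets $N$ nontrivially. The ordinals are well-ordered, so we can let $\alpha$ be the least ordinal with $N \cap Z_\alpha(K) \neq 1$. Since $Z_0(K) = 1$, we have $\alpha \ge 1$.

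First we rule out that $\alpha$ is a limit ordinal. If it were, then $Z_\alpha(K) = \bigcup_{\beta<\alpha} Z_\beta(K)$. Any nontrivial $x \in N \cap Z_\alpha(K)$ would then lie in some $Z_\beta(K)$ with $\beta<\alpha$, contradicting minimality. So $\alpha = \beta+1$ is a successor. If $\beta = 0$, then $Z_1(K) = Z(K)$ and we are done.

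Otherwise $\beta \ge 1$, and we take $1 \neq x \in N \cap Z_{\beta+1}(K)$. By minimality $x \notin Z_\beta(K)$, and in particular $x \notin Z(K)$, since $Z(K) = Z_1(K) \le Z_\beta(K)$. Hence there is $g \in K$ with $[x,g] \neq 1$. We now check where this commutator lies.
\begin{itemize}
\item It lies in $N$, because $N$ is normal.
\item It lies in $Z_\beta(K)$, because $xZ_\beta(K)$ is central in $K/Z_\beta(K)$ by the definition of $Z_{\beta+1}(K)$.
\end{itemize}
Thus $1 \neq [x,g] \in N \cap Z_\beta(K)$ with $\beta < \alpha$, contradicting the minimality of $\alpha$. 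Therefore $\alpha = 1$ and $N \cap Z(K) \neq 1$.

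The only delicate point is the limit case. It needs only the continuity clause $Z_\lambda(K) = \bigcup_{\alpha<\lambda} Z_\alpha(K)$ and the fact that $Z_\beta(K)$ increases with $\beta$, so that $Z(K) \le Z_\beta(K)$ for every $\beta \ge 1$. The rest is identical to the nilpotent case.
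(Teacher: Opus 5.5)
Your proof is correct and follows essentially the same route as the paper: take the least ordinal $\alpha$ with $N \cap Z_\alpha(K) \neq 1$, rule out the limit case by continuity, and then use commutators in the successor step. The only difference is that the paper skips your case split on $\beta$. It observes directly that $[x,g] \in N \cap Z_\beta(K) = 1$ for every $g \in K$, so the chosen $x$ is already central.
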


\begin{proof}
Let $Z_\Lambda(K) = K$. Since $N \cap Z_\Lambda(K) = N \neq 1$, by well-ordering we may choose the least ordinal $\alpha \le \Lambda$ such that $N \cap Z_\alpha(K) \neq 1$. Clearly $\alpha \neq 0$, and $\alpha$ cannot be a limit ordinal because $N \cap Z_\alpha(K) = \bigcup_{\beta < \alpha} (N \cap Z_\beta(K)) \neq 1$ would imply $N \cap Z_\beta(K) \neq 1$ for some $\beta < \alpha$, contradicting the minimality of $\alpha$. Thus $\alpha = \beta + 1$ is a successor ordinal and $N \cap Z_\beta(K) = 1$. Take any $1 \neq x \in N \cap Z_{\beta+1}(K)$. For every $g \in K$, since $Z_{\beta+1}(K)/Z_\beta(K) = Z(K/Z_\beta(K))$ and $N \triangleleft K$, we have $[x, g] \in N \cap Z_\beta(K) = 1$; hence $1 \neq x \in N \cap Z(K)$.
\end{proof}

\begin{lemma}\label{lem:12.3}
If a central subgroup $Z \le Z(C)$ of a group $C$ has the property that $C/Z$ is hypercentral, then $C$ is also hypercentral. In particular, if $H$ is hypercentral, then the subgroup $C_H = F^\times \varphi(H)$ in Section 8 is hypercentral with $Z(C_H) = F^\times$.
\end{lemma}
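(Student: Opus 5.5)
Let $\pi : C \to C/Z$ be the projection, and suppose $Z_\Lambda(C/Z) = C/Z$. The natural approach is to show by transfinite induction that the upper central series of $C$ dominates the shifted pullback of the upper central series of $C/Z$. Concretely, I will prove that for every ordinal $\alpha$,
\[
\pi^{-1}\bigl(Z_\alpha(C/Z)\bigr) \le Z_{1+\alpha}(C).
\]
Then at $\alpha = \Lambda$ we get $C = \pi^{-1}(C/Z) \le Z_{1+\Lambda}(C)$, so $C$ is hypercentral.

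The induction runs as follows.

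\textbf{Base case $\alpha = 0$.} Here $\pi^{-1}(1) = Z \le Z(C) = Z_1(C)$.

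\textbf{Successor step.} Assume $\pi^{-1}(Z_\alpha(C/Z)) \le Z_{1+\alpha}(C)$, and take $x \in \pi^{-1}(Z_{\alpha+1}(C/Z))$. For every $g \in C$, the commutator $[\pi(x), \pi(g)]$ lies in $Z_\alpha(C/Z)$, so $[x,g] \in \pi^{-1}(Z_\alpha(C/Z)) \le Z_{1+\alpha}(C)$. Hence $xZ_{1+\alpha}(C)$ is central in $C/Z_{1+\alpha}(C)$, that is, $x \in Z_{1+\alpha+1}(C)$.

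\textbf{Limit step.} Preimages commute with unions, so $\pi^{-1}(Z_\lambda(C/Z)) = \bigcup_{\alpha<\lambda}\pi^{-1}(Z_\alpha(C/Z)) \le \bigcup_{\alpha<\lambda} Z_{1+\alpha}(C) \le Z_{1+\lambda}(C)$. This uses only that the upper central series is increasing.

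The only care needed is the ordinal bookkeeping ($1+\alpha$ versus $\alpha+1$). Monotonicity of $Z_\beta(C)$ in $\beta$ handles it: it is enough that $Z_{1+\alpha}(C)$ contains the preimage at each stage, and $1+\lambda = \lambda$ for infinite $\lambda$ causes no trouble. Alternatively, one can avoid this entirely by proving $\pi^{-1}(Z_\alpha(C/Z)) \le Z_{\alpha+1}(C)$ directly by the same induction. In that version the limit step uses $\bigcup_{\alpha<\lambda} Z_{\alpha+1}(C) \le Z_\lambda(C)$, which holds because $\alpha+1 \le \lambda$ for $\alpha < \lambda$.

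\textbf{The application to $C_H$.} Take $Z = F^\times$, which is central in $C_H = F^\times\varphi(H)$ because $F = Z(D)$. As in the proof of Lemma \ref{lem:8.4},
\[
C_H/F^\times \cong \varphi(H)/(\varphi(H)\cap F^\times),
\]
which is a quotient of the hypercentral group $H$. Quotients of hypercentral groups are hypercentral: by induction, the image of $Z_\alpha(H)$ lies in the $\alpha$-th upper central term of the quotient, since commutators map to commutators. So the first part gives that $C_H$ is hypercentral.

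\textbf{The equality $Z(C_H) = F^\times$.} This is the same argument as in Lemma \ref{lem:8.4}, with no nilpotency used. Clearly $F^\times \le Z(C_H)$. Conversely, if $f\varphi(h)$ is central in $C_H$, then $\varphi(h)$ commutes with $\varphi(H)$, so $h \in Z(H)$. Then $\varphi(h)$ commutes with $\varphi(\Q[H])$, hence with all of $D$, so $\varphi(h) \in F^\times$ and therefore $f\varphi(h) \in F^\times$.
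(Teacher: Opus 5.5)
Your proof is correct and matches the paper's argument: the same transfinite induction $\pi^{-1}(Z_\alpha(C/Z)) \le Z_{1+\alpha}(C)$ with the same base, successor and limit steps, then the same application to $C_H$ via $C_H/F^\times \cong \varphi(H)/(\varphi(H)\cap F^\times)$ and the nilpotency-free part of Lemma~\ref{lem:8.4}. You also spell out two points the paper only asserts: that quotients of hypercentral groups are hypercentral, and that $h \in Z(H)$ forces $\varphi(h) \in F^\times$.
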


\begin{proof}
Let $\pi : C \twoheadrightarrow C/Z$ be the natural quotient map. Transfinite induction on $\alpha$ shows that $\pi^{-1}(Z_\alpha(C/Z)) \subseteq Z_{1+\alpha}(C)$: for $\alpha = 0$, $\pi^{-1}(1) = Z \le Z_1(C)$; if the inclusion holds for $\alpha$ and $x \in \pi^{-1}(Z_{\alpha+1}(C/Z))$, then for every $y \in C$ we have $\pi([x, y]) = [\pi(x), \pi(y)] \in Z_\alpha(C/Z)$, so $[x, y] \in \pi^{-1}(Z_\alpha(C/Z)) \subseteq Z_{1+\alpha}(C)$ and hence $x \in Z_{1+(\alpha+1)}(C)$; limit ordinals follow by taking unions. Thus $Z_\Lambda(C/Z) = C/Z$ implies $Z_{1+\Lambda}(C) = C$. For $C_H = F^\times \varphi(H)$, the proof of $Z(C_H) = F^\times$ in Lemma \ref{lem:8.4} used only that $\varphi$ is a division ring isomorphism, while $C_H/F^\times \cong \varphi(H)/(\varphi(H)\cap F^\times)$ is a quotient of the hypercentral group $H$ and is therefore hypercentral; hence $C_H$ is hypercentral.
\end{proof}

\begin{lemma}\label{lem:12.4}
If $G$ is a torsion-free hypercentral group, then $G$ is bi-orderable and amenable, $Z(G)$ is isolated in $G$, and the quotient group $G/Z(G)$ is again a torsion-free hypercentral group (and hence also bi-orderable).
\end{lemma}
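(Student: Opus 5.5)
We verify the four properties of a torsion-free hypercentral group $G$ in the order: unique roots and isolation of the center, then bi-orderability, then amenability, and finally the structure of $G/Z(G)$.

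\emph{Unique roots and isolation of $Z(G)$.} The basic input is that a torsion-free hypercentral group has unique roots: $x^n=y^n$ forces $x=y$. We cannot use Lemma \ref{lem:7.3} here, since that presupposes bi-orderability. Instead we argue locally. Any finitely generated subgroup of a hypercentral group is nilpotent; this is a classical theorem (Chernikov/McLain), and we will cite it. Given $x,y$ with $x^n=y^n$, the subgroup $\langle x,y\rangle$ is then finitely generated torsion-free nilpotent. By Malcev it is bi-orderable, so Lemma \ref{lem:7.3} applies inside it and gives $x=y$. Isolation of $Z(G)$ now follows exactly as in Lemma \ref{lem:7.4}: if $g^n\in Z(G)$, then $(hgh^{-1})^n=g^n$ for every $h$, hence $hgh^{-1}=g$.

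\emph{Bi-orderability.} Since $G$ is locally nilpotent and torsion-free, every finitely generated subgroup is torsion-free nilpotent and therefore bi-orderable. A group is bi-orderable as soon as all of its finitely generated subgroups are; this is a standard compactness argument. Concretely, each finite set of constraints on a positive cone $P$ (that $P$ be a semigroup, that $G=P\sqcup P^{-1}\sqcup\{1\}$, and that $P$ be conjugation-invariant) involves only finitely many elements. Such a finite set is satisfiable because it lives in a finitely generated subgroup, where we restrict a bi-ordering of that subgroup. Tychonoff's theorem on $\{0,1\}^G$ then produces a global positive cone. We expect this step to be the main obstacle, so it should be written out carefully rather than cited loosely.

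\emph{Amenability.} $G$ is locally nilpotent, hence locally amenable, and a directed union of amenable groups is amenable. Alternatively, $G$ is elementary amenable as an ascending union of solvable groups. Either way the Ore condition (Tamari) applies.

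\emph{The quotient $G/Z(G)$.} It is torsion-free by isolation of $Z(G)$. For hypercentrality, the image of the transfinite upper central series satisfies $Z_{\alpha}(G/Z(G))\supseteq Z_{1+\alpha}(G)/Z(G)$, proved by transfinite induction analogous to Lemma \ref{lem:12.3}. Since $Z_\Lambda(G)=G$, this gives $Z_\Lambda(G/Z(G))=G/Z(G)$ for $\Lambda$ large enough (for instance at a limit stage past $\Lambda$). Applying the bi-orderability step to $G/Z(G)$ then shows it is bi-orderable as well.
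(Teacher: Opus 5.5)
Your proposal is correct and follows the paper's proof: local nilpotency of hypercentral groups plus Malcev on finitely generated subgroups, the local nature of bi-orderability and amenability, unique roots giving isolation of $Z(G)$ as in Lemma~\ref{lem:7.4}, and a transfinite comparison of upper central series for $G/Z(G)$. The differences are only cosmetic. You obtain unique roots inside $\langle x,y\rangle$ before globalizing, whereas the paper applies Lemma~\ref{lem:7.3} after proving $G$ bi-orderable. You also write out the Tychonoff argument that the paper leaves as ``local property,'' and you prove only the inclusion $Z_{1+\alpha}(G)/Z(G)\le Z_\alpha(G/Z(G))$ rather than equality, which suffices since $1+\Lambda\ge\Lambda$.
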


\begin{proof}
By a standard theorem in group theory (see Robinson \cite[12.2.3]{Robinson1996}), every hypercentral group is locally nilpotent; that is, every finitely generated subgroup is nilpotent. Consequently, every finitely generated subgroup of a torsion-free hypercentral group $G$ is torsion-free nilpotent, and hence bi-orderable and amenable by Malcev's theorem \cite{Malcev1951}. Because both bi-orderability and amenability are local properties (closed under directed unions), $G$ itself is bi-orderable and amenable. Unique extraction of roots in bi-orderable groups (Lemma \ref{lem:7.3}, exactly as in Lemma \ref{lem:7.4}) implies that $Z(G)$ is isolated in $G$, so $G/Z(G)$ is torsion-free. Moreover, $Z_\alpha(G/Z(G)) = Z_{1+\alpha}(G)/Z(G)$ for every ordinal $\alpha$, so $G/Z(G)$ is again a torsion-free hypercentral group and hence bi-orderable.
\end{proof}

\begin{theorem}\label{thm:12.5}
Let $k$ be any field of characteristic zero, and let $G$ and $H$ be arbitrary torsion-free hypercentral groups. If there exists a $k$-algebra isomorphism
\[
D_k(G) \cong_k D_k(H)
\]
(in particular, when $k = \Q$, if there exists a ring isomorphism $D_{\Q}(G) \cong D_{\Q}(H)$), then $G \cong H$.

More generally, let $G$ and $H$ be torsion-free groups satisfying:
\begin{enumerate}
\item[(i)] $G$ and $H$ admit bi-invariant total orderings, and the group algebras $k[G]$ and $k[H]$ satisfy the classical Ore condition (for instance, when $G$ and $H$ are amenable);
\item[(ii)] the central quotients $G/Z(G)$ and $H/Z(H)$ admit bi-invariant total orderings;
\item[(iii)] under the isomorphism $\varphi : D_k(H) \xrightarrow{\sim} D_k(G)$, every nontrivial normal subgroup of $C_H = Z(D_k(G))^\times \varphi(H)$ intersects the center $Z(C_H) = Z(D_k(G))^\times$ nontrivially.
\end{enumerate}
Then $D_k(G) \cong_k D_k(H)$ implies $G \cong H$.
\end{theorem}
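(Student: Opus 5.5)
The plan is to rerun the argument of Sections 7--11 with $\Q$ replaced by $k$ and $\Qbar$ by an algebraic closure $\bar{k}$, as in Corollary \ref{cor:12.1}. At each step where nilpotency was used, we substitute either the hypercentral lemmas \ref{lem:12.2}--\ref{lem:12.4} or the abstract hypotheses (i)--(iii). So we first isolate exactly which properties of $G$ and $H$ the earlier proof consumed. The list is:
\begin{enumerate}
\item[(a)] bi-orderability of $G$ and $H$ and the Ore condition. These give $D_k(G)\hookrightarrow k((G,<))$, the support argument of Lemma \ref{lem:7.1}, unique roots (Lemma \ref{lem:7.3}), and the coefficient map $c$ of Lemma \ref{lem:9.1}.
\item[(b)] bi-orderability of $G/Z(G)$. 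This makes $A_G$ strongly graded by an ordered group, so that $\ell_0$ is multiplicative (Lemma \ref{lem:8.2}).
\item[(c)] the central-intersection property for $C_H$. This was used only in Lemma \ref{lem:8.5}, via Lemmas \ref{lem:8.3} and \ref{lem:8.4}.
\end{enumerate}
Everything from Lemma \ref{lem:8.6} onward, including Sections 9--11, uses only consequences of these. Proposition \ref{prop:7.2} needs only Lemma \ref{lem:7.1}, that is, a bi-invariant order. Lemma \ref{lem:7.4} needs only unique roots. Section 9 needs Theorem \ref{thm:9.9}, which holds over any characteristic-zero $k$. Sections 10--11 use only free abelian quotients and Theorem \ref{thm:6.1} applied with $k_{\mathrm{aux}}\supseteq\bar{k}$.

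For the general statement we verify the chain. Hypothesis (i) gives (a) and the center computation $Z(D_k(G))=\Frac k[Z(G)]$. Isolation of $Z(G)$ then follows from Lemma \ref{lem:7.3}. Hypothesis (ii) is used in place of Lemma \ref{lem:7.4}'s conclusion to order $C=G/Z(G)$; the ordering on $H/Z(H)$ is not needed on the $G$-side. For Lemma \ref{lem:8.5}, $\ker(\ell|_{C_H})$ is normal in $C_H$. If it were nontrivial, (iii) would make it meet $Z(C_H)$ nontrivially. Here $Z(C_H)=F^\times$ by the first paragraph of Lemma \ref{lem:8.4}, which uses only that $\varphi$ is an isomorphism and Proposition \ref{prop:7.2}. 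Since $\ell|_{F^\times}=\id$, this is a contradiction. The remaining steps of Section 8, and then Sections 9--11, go through verbatim, producing $\Psi:G\xrightarrow{\sim}H$. One must also keep $c$ defined through a bi-invariant ordering of $G$; this is available by (i). In Lemma \ref{lem:9.2}, injectivity of $\Q[H_1]\to D$ uses (b) for $G$ only.

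For the hypercentral case we check (i)--(iii). Lemma \ref{lem:12.4} gives bi-orderability and amenability of $G$ and $H$. Amenability together with zero-divisor-freeness gives the Ore condition by Tamari. Lemma \ref{lem:12.4} also gives bi-orderability of the central quotients, which is (ii). Lemma \ref{lem:12.3} shows $C_H$ is hypercentral with $Z(C_H)=F^\times$, and Lemma \ref{lem:12.2} then yields (iii). The case $k=\Q$ with a mere ring isomorphism follows since such an isomorphism fixes $\Q$.

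The main obstacle is the audit itself: confirming that no later step secretly uses finite nilpotency class. The most delicate spots are Lemma \ref{lem:8.4}, where the nilpotency of $C_H$ is now replaced by hypothesis (iii), and Lemma \ref{lem:8.6}. In Lemma \ref{lem:8.6} we must check that the identification $\varphi(Z(H))=\varphi(H)\cap F^\times$ still uses only Proposition \ref{prop:7.2} for $H$, which it does under (i).
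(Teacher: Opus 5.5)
Your proposal is correct and follows the paper's proof of this theorem essentially step for step. Like the paper, you isolate where nilpotency entered Sections 7--11: the bi-ordering and Ore input behind Proposition~\ref{prop:7.2} and the map $c$, the ordering of $G/Z(G)$ needed for $\ell_0$, and the central-intersection property used in Lemma~\ref{lem:8.5}. You replace these by (i)--(iii), and then verify (i)--(iii) for torsion-free hypercentral groups via Lemmas~\ref{lem:12.2}--\ref{lem:12.4} together with Tamari's theorem.
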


\begin{proof}
For torsion-free groups $G$ and $H$ satisfying conditions (i)--(iii): condition (i) guarantees the center characterization $Z(D_k(G)) = \Frac k[Z(G)]$ of Proposition \ref{prop:7.2} and the lowest-term coefficient homomorphism $c$ of Section 9; condition (ii) guarantees that on the centrally localized strongly graded ring $A_G$ in Section 8 one can define the multiplicative leading-term map $\ell_0$ and extend it to $\ell : D^\times \to F^\times G$; condition (iii) replaces Lemmas \ref{lem:8.3}--\ref{lem:8.4} to give the injectivity of $\ell|_{C_H}$ in Lemma \ref{lem:8.5}, thereby completing Lemmas \ref{lem:8.6}--\ref{lem:8.9} and Proposition \ref{prop:8.10} to replace $H$ by the isomorphic subgroup $H_1 \le F^\times G$. The arguments in Sections 9--11 depend only on the central decomposition $E = KG = KH$ with $K = Z(E)$ and use no further nilpotency assumptions, yielding $G \cong H$. When $G$ and $H$ are torsion-free hypercentral groups, Lemmas \ref{lem:12.2}, \ref{lem:12.3}, and \ref{lem:12.4} ensure that conditions (i)--(iii) hold automatically.
\end{proof}

\begin{remark}\label{rem:12.6}
In their 1982 paper \cite{Farkas1982}, Farkas, Schofield, Snider, and Stafford constructed two non-isomorphic finitely generated torsion-free supersolvable groups (that is, polycyclic groups possessing a finite normal series with cyclic factors) whose rational group algebras have isomorphic classical Ore division rings of fractions. This counterexample demonstrates that once one moves beyond torsion-free nilpotent and torsion-free hypercentral groups to torsion-free supersolvable groups—where the group center may be trivial and nontrivial normal subgroups need not intersect the center (so that condition (iii) of Theorem \ref{thm:12.5} fails)—isomorphism of the division rings of fractions no longer suffices to determine the isomorphism class of the underlying group. Thus, within the classical hierarchy of solvable groups, Theorems \ref{thm:main} and \ref{thm:12.5} reach the optimal class of groups for which this reconstruction property holds.
\end{remark}

\section{Concluding Remarks and Future Directions}

We have proved that torsion-free nilpotent groups and torsion-free hypercentral groups of arbitrary cardinality are uniquely determined up to isomorphism by the classical Ore division rings of fractions of their rational (or characteristic-zero) group algebras, resting on a weight-preserving isomorphism theorem for rational function fields of free abelian groups of arbitrary rank. Building on the methods and results developed in this paper, several directions merit further investigation in future work:

\begin{enumerate}
\item \textbf{Reconstruction of general (not necessarily torsion-free) abelian groups via fraction fields of quotients of group algebras modulo prime ideals:} In the commutative setting, we already know that any torsion-free abelian group $G$ is uniquely determined by the fraction field $\Q(G) = \Frac \Q[G]$ of its rational group algebra \cite{LinWang2026}. For a general abelian group $G$ that is not necessarily torsion-free (with torsion subgroup $T_G = \Tor(G)$), the group algebra $k[G]$ contains zero divisors whenever $T_G \neq 0$ and thus does not admit a classical total field of fractions directly; a natural idea is to ask whether $G$ can be reconstructed by quotienting the group algebra by suitable prime ideals $\mathfrak{p}$ and passing to the fraction fields $\Frac(k[G]/\mathfrak{p})$ of the resulting quotient domains. It should be noted that without restrictions on the prime ideals and the base field, the quotient by a single prime ideal cannot recover the group: for instance, every group algebra admits the augmentation homomorphism $\varepsilon_G : k[G] \to k$ ($\sum a_g X^g \mapsto \sum a_g$), whose kernel $I_G = \ker \varepsilon_G$ is a maximal (and hence prime) ideal satisfying $k[G]/I_G \cong k$ for every group $G$; moreover, over an algebraically closed field $k$ of characteristic zero, the non-isomorphic groups $G = \Z \oplus C_4$ and $H = \Z \oplus (C_2 \oplus C_2)$ have isomorphic group algebras $k[G] \cong k[H]$, so the prime spectrum and its quotient rings over an algebraically closed field cannot recover the full torsion structure.

However, if one focuses on \textbf{minimal prime ideals} and fixes the base field to be the rational numbers $\Q$ (over which finite abelian groups are uniquely determined by their rational group algebras by the classical theorem of Perlis and Walker \cite{PerlisWalker1950}; for example, $\Q[C_4] \cong \Q \times \Q \times \Q(i)$ is not isomorphic to $\Q[C_2^2] \cong \Q^4$), a structure emerges that connects directly with our methods: over an algebraically closed field $k$ of characteristic zero, the divisibility of $k^\times$ allows every character $\chi : T_G \to k^\times$ on the torsion subgroup to extend to a character $\tilde{\chi} : G \to k^\times$, inducing a surjective ring homomorphism
\[
\varphi_\chi : k[G] \twoheadrightarrow k[G/T_G], \qquad X^g \longmapsto \tilde{\chi}(g)X^{g + T_G},
\]
whose kernel $\mathfrak{p}_\chi = \ker \varphi_\chi$ is a minimal prime ideal of $k[G]$ with quotient fraction field $\Frac(k[G]/\mathfrak{p}_\chi) \cong k(G/T_G)$. Accordingly, a compelling question for future study is whether an arbitrary abelian group $G$ can be reconstructed from the family of fraction fields $\{\Frac(\Q[G]/\mathfrak{p})\}$ of the quotients of $\Q[G]$ modulo its minimal prime ideals $\mathfrak{p}$, together with the rational arithmetic structure of these branches and their Galois action correspondence upon base change to $\Qbar$. This approach naturally divides into two steps: first reconstructing the torsion-free quotient $G/T_G$ from the fraction fields of the branches, and then using the rational structure and Galois action across the branches to recover the torsion subgroup $T_G$ as well as the group extension structure
\[
0 \longrightarrow T_G \longrightarrow G \longrightarrow G/T_G \longrightarrow 0.
\]
\item \textbf{Residually torsion-free nilpotent groups and universal division rings of fractions:} For free groups, surface groups, or more general residually torsion-free nilpotent groups, $k[G]$ no longer satisfies the classical Ore condition when $G$ is non-amenable, but by the work of Linnell, Kielak, and Jaikin-Zapirain, its natural division ring envelope is the Hughes-free division ring (or Linnell universal division ring of fractions) $\mathcal{D}_{k[G]}$. Investigating the extent to which an isomorphism $\mathcal{D}_{\Q[G]} \cong \mathcal{D}_{\Q[H]}$ determines the isomorphism class or finer group-theoretic invariants of residually torsion-free nilpotent groups is a natural next step.
\item \textbf{Base fields of positive characteristic:} Our proof uses the characteristic-zero hypothesis in two essential places: first, in Theorem \ref{thm:9.9}, which relies on the separability and square-freeness of $p(Y^d)$ in characteristic zero; and second, in the character separation lemma via the existence of primitive roots of unity of arbitrary order and the torsion-free divisibility of the multiplicative group of an algebraically closed subfield. Over a base field of characteristic $p > 0$, where the $p$-power Frobenius endomorphism and purely inseparable extensions enter, formulating an analogue of weighted rational function field rigidity up to perfection (or modulo $p$-power roots) is an interesting problem in its own right.
\item \textbf{Infinite-dimensional algebraic tori and equivariant birational geometry:} From an algebraic-geometric perspective, Theorem \ref{thm:6.1} can be viewed as a rigidity theorem for infinite-dimensional split algebraic tori under birational transformations equivariant with respect to a diagonalizable group action. Extending our modulo-$p$ defect conservation, countable zero-weight summand absorption, and transfinite block upper-triangular patching techniques to non-split algebraic tori or broader classes of infinite-dimensional equivariant varieties promises further applications in infinite-dimensional birational geometry.
\end{enumerate}

\section*{Author Contributions and AI Usage Statement}

\noindent\textbf{Human Contributions:} The human author verified the ideas and proofs generated by AI, and organized and rewrote the proofs; in addition, the human author independently proposed Proposition \ref{prop:7.2} and Proposition \ref{prop:8.10} and independently provided their proofs.

\vspace{4pt}
\noindent\textbf{AI Participation:} AI provided the initial ideas and preliminary proofs for the paper, and participated in code writing.

\end{document}